\newtheorem{theorem}{Theorem}
\newtheorem{Prop}[theorem]{Proposition}
\newtheorem{lemma}[theorem]{Lemma}
\numberwithin{theorem}{section}
\newtheorem{Cor}[theorem]{Corollary}
\theoremstyle{definition}
\newtheorem{Def}[theorem]{Definition}
\newtheorem{Remark}[theorem]{Remark}
\theoremstyle{remark}
\def\R{\mathds{R} }
\def\E{\mathds{E} }
\def\Z{\mathds{Z} }
\def\N{\mathds{N} }
\def\C{\mathds{C} }
\def\K{\mathds{K} }
\DeclareMathOperator{\TKK}{TKK}
\DeclareMathOperator{\ad}{ad}
\DeclareMathOperator{\Inn}{Inn}
\DeclareMathOperator{\End}{End}
\DeclareMathOperator{\bessel}{\mathcal B_\lambda}
\DeclareMathOperator{\Fock}{\mc F_\lambda}
\DeclareMathOperator{\HypFdegen}{{}_2F_0}
\DeclareMathOperator{\HypFpq}{{}_pF_q}
\DeclareMathOperator{\KU}{U}
\DeclareMathOperator{\KO}{\widetilde U}
\DeclareMathOperator{\KV}{\widetilde V}
\DeclareMathOperator{\Rsq}{R^2_\lambda}
\DeclareMathOperator{\SB}{\pil(C)}
\newcommand{\iSB}{\pil(C)^{-1}}
\DeclareMathOperator{\rSB}{SB}
\newcommand{\irSB}{\rSB^{-1}}
\newcommand{\bfip}[1]{\left<{#1}\right>_\mathcal B}
\newcommand{\bfipx}[1]{\left<{#1}\right>_{\mathcal B(x)}}
\newcommand{\bfipbar}[1]{\overline{\left<{#1}\right>}_\mathcal B}
\newcommand{\ip}[1]{\left<{#1}\right>_W}
\newcommand{\pt}[1]{\partial_{#1}}
\newcommand{\mf}[1]{\mathfrak{#1}}
\newcommand{\ds}[1]{\mathds{#1}}
\newcommand{\mc}[1]{\mathcal{#1}}
\newcommand{\pushright}[1]{\ifmeasuring@#1\else\omit\hfill$\displaystyle#1$\fi\ignorespaces}
\newcommand{\ol}[1]{\overline{#1}}
\newcommand{\g}{{\mathfrak{g}}}
\newcommand{\mg}{{\mathfrak{g}}}
\newcommand{\oa}{\bar{0}}
\newcommand{\ob}{\bar{1}}
\newcommand{\dea}{{\delta_1}}
\newcommand{\deb}{{\delta_2}}
\newcommand{\dec}{{\delta_3}}
\newcommand{\minus}{\scalebox{0.9}{{\rm -}}}
\newcommand{\plus}{\scalebox{0.6}{{\rm+}}}
\DeclareMathOperator{\Der}{Der}
\DeclareMathOperator{\pil}{\pi_\lambda}
\DeclareMathOperator{\rol}{\rho_\lambda}
\newcommand{\Ila}{\mc I_\lambda}
\newcommand{\Ial}{\mc I_\alpha}
\DeclareMathOperator{\Ad}{Ad}
\DeclarePairedDelimiter\abs{\lvert}{\rvert}%
\DeclarePairedDelimiter\norm{\lVert}{\rVert}%
\let\oldabs\abs
\def\abs{\@ifstar{\oldabs}{\oldabs*}}
\let\oldnorm\norm
\def\norm{\@ifstar{\oldnorm}{\oldnorm*}}
\begin{document}
\title[Schr\"odinger model, Fock model and intertwiner for $D(2,1;\alpha)$]{A Schr\"odinger model, Fock model and intertwining Segal-Bargmann transform for the exceptional Lie superalgebra $D(2,1;\alpha)$}

\author{Sigiswald Barbier}
\address{Department of Electronics and Information Systems \\Faculty of Engineering and Architecture\\Ghent University\\Krijgslaan 281, 9000 Gent\\ Belgium.}
\email{Sigiswald.Barbier@UGent.be}

\author{Sam Claerebout}
\address{Department of Electronics and Information Systems \\Faculty of Engineering and Architecture\\Ghent University\\Krijgslaan 281, 9000 Gent\\ Belgium.}
\email{Sam.Claerebout@UGent.be}

\date{\today}
\keywords{Fock model, Schr\"odinger model, Minimal representations, Lie superalgebras, Bessel-Fischer product, Segal-Bargmann transfrom.}
\subjclass[2010]{17B10, 17B25, 17B60, 17C50, 30H20, 58C50} 

\begin{abstract}
We construct two infinite-dimensional irreducible representations for $D(2,1;\alpha)$: a Schrödinger model and a Fock model. Further, we also introduce an intertwining isomorphism. These representations are similar to the minimal representations constructed for the orthosymplectic Lie supergroup and for Hermitian Lie groups of tube type.    The intertwining isomorphism is the analogue of the Segal-Bargmann transform for the orthosymplectic Lie supergroup and for Hermitian Lie groups of tube type.    
\end{abstract}

\maketitle

\tableofcontents

\section{Introduction}
An important theme in representation theory is a good understanding of the class of all unitary representation of a given Lie group. One way to approach this problem is via the orbit method. This method developed by Kirillov, Kostant, Duflo, Vogan and many others \cite{Kirillov}, says that there should be a correspondence between the irreducible unitary representations of a Lie group and the orbits of its dual Lie algebra under the coadjoint action. This connection is one-on-one for nilpotent Lie groups. For other Lie groups this is no longer true, but it is still believed that these coadjoint orbits should still be linked to unitary irreducible representations. 

Under this orbit philosophy, the minimal representation of a semisimple Lie group is the one that ought to correspond to the minimal nilpotent orbit, (see \cite{GanSavin} for a precise definition of a minimal representation). The standard tools of the orbit method seem to work the least for this minimal representation, but this makes it study even more interesting. The fact that the minimal representation	 is in some sense the `smallest' infinite dimensional representation allows for a rich variety of interesting realisations that can be and have been studied using a wide range of technics, \cite{VergneRossi, DvorskySahi, KM, KO1, KO2, KO3}. There exists a unified approach to construct such minimal representations using Jordan algebras, \cite{HKMM, HKMO}. 
It is believed that the orbit method should also be useful to understand, study and construct representations for Lie supergroups (or Lie superalgebras, since for technical reasons it is easier to just work on the algebraic level in the super case). For example, also the unitary irreducible representations of nilpotent Lie supergroups can be completely understood using the orbit method, \cite{Salmasian, NeebSalmasian}. Recently, a minimal representation of the orthosymplectic Lie superalgebra has been studied in detail using the Jordan algebra approach, \cite{BC1, BF, BCD}. This minimal representation of $\mathfrak{osp}(p,q|2n)$ can be seen as a super version of the minimal representation of $\mathfrak{o}(p,q)$ which has been studied in detail by Kobayashi and various collaborators, \cite{KO1,KO2,KO3,KM} . 
The goal of this paper is to construct a similar representation for the Lie superalgebra $D(2,1;\alpha)$. In particular we will construct two different models, which are similar to the Schrödinger and Fock model or the orthosymplectic Lie superalgebra studied in \cite{BF, BCD}  and an intertwining isomorphism between these two models, which is similar to the Segal-Bargmann transform studied in \cite{HKMO} for the non-super case and in \cite{BCD} for $\mathfrak{osp}(m,2|2n)$. 

The Lie superalgebra $D(2,1;\alpha)$ is a deformation of $\mathfrak{osp}(4|2)$ depending on a complex parameter $\alpha$, \cite{K, Scheunert}. Together with $G(3)$ and $F(4)$ it belongs to the class of exceptional basic classical Lie superalgebras. In contrast to the basic classical Lie superalgebras of type $A,B,C$ or $D$ these exceptional Lie superalgebras do not have a Lie algebra analogue. This makes the study of these Lie superalgebras more challenging but also intriguing since we do not know what to expect.
Representation theory of the Lie superalgebra $D(2,1;\alpha)$ has already been studied extensively, see for example \cite{Jo, ChengWang, ChenChengLuo}.

The study of infinite dimensional irreducible representations of Lie superalgebras has still a lot of open questions. For example, it is still not clear what a good definition of unitarity should be in the super case. The existing accepted definition \cite[Definition 2 ]{CCTV} has the drawback that a lot of Lie superalgebras do not admit unitary representations at all, \cite[Theorem 6.2.1]{NeebSalmasian}. This is a highly unsatisfying situation, which has inspired many to look for alternative definitions \cite{dGM, Tuynman}. We strongly believe that the construction of explicit models of irreducible representations that `ought' to be unitary such as the ones constructed in this paper should help in this undertaking.

 Let us now take a closer look at the structure of and results in this paper.

\subsection{Contents}
In Section \ref{D(2,1,alpha)}, we introduce $D(2,1;\alpha)$ using a construction of Scheunert and equip it with a three grading coming from a short subalgebra. In Section \ref{Section Jordan}, $D(2,1;\alpha)$ is alternatively constructed as the $\TKK$-algebra of the Jordan superalgebra $D_\alpha$. A $\TKK$-algebra is by construction three graded, and for $D(2,1;\alpha)$ this three grading corresponds to the one considered in Section \ref{D(2,1,alpha)}.  In Section \ref{Section polynomial realisations}, we recall the polynomial realisation constructed in \cite{BC1} for three graded Lie (super)-algebras and apply it to $D(2,1;\alpha)$ to obtain two realisations: a Fock model and a Schrödinger model. The rest of the paper is devoted to the study of these two models and the construction of an intertwining isomorphism: the Segal-Bargmann transform. We start by taking a closer look at the space on which the Fock model is defined in Section \ref{Section Fock space}. In particular, we introduce a Bessel-Fischer product and show that it leads to a non-degenerate superhermitian product. We also have a reproducing kernel on the Fock space. Next we tackle some properties of the Fock model. We show that the Fock representation is skew-symmetric with respect to the Bessel-Fischer product (Theorem \ref{PropSkewSymRho}). We give the branching law of the Fock model for the subalgebras $\mathfrak{osp}(2|2)\oplus \R$ and $\mathfrak{osp}(1|2)$ (Theorem \ref{ThDecF}). We also calculate the Gelfand-Kirillov dimension (Proposition \ref{Prop Gelfand-Kirillov dimension}) of the Fock representation.
In Section \ref{Section SB transform}, we introduce an operator $\pi_\lambda(C^{-1})$ from the Fock model to the Schrödinger model. We show that monomials of the Fock space gets mapped under $\pi_\lambda(C^{-1})$ to confluent hypergeometric functions of the second kind multiplied with an exponential (Theorem \ref{CorMon}).
We define the Segal-Bargmann transform roughly as the inverse of $\pi_\lambda(C^{-1})$. Since the Segal-Bargmann transform is an intertwining isomorphism (Proposition \ref{Prop inverse SB transform}), this immediately leads to a branching law for the Schrödinger model (Theorem \ref{ThDecW}).
We finish the paper by using the representational framework to recover recurrence relations for the confluent hypergeometric functions of the second kind.
\subsection{Notations}
Let us finish the introduction by mentioning some notations used in this paper. We will work over the field $\ds K$, which is either the field of real numbers $\ds R$ or the field of complex numbers $\ds C$. Function spaces will always be defined over $\C$. We use the convention $\ds N = \{0,1,2,\ldots\}$ and denote the complex unit by $\imath$.

A super-vector space is defined as a $\Z_2$-graded vector space, i.e., $V=V_{\oa}\oplus V_{\ob}$, with $V_{\oa}$ and $V_{\ob}$ vector spaces. An element $v$ of a super-vector space $V$ is called homogeneous if it belongs to $V_i$, $i\in \Z_2$. We call $i$ the parity of $v$ and denote it by $|v|$. An homogeneous element $v$ is even if $|v|=0$ and odd if $|v|=1$. When we use $|v|$ in a formula, we are considering homogeneous elements, with the implicit convention that the formula has to be extended linearly for arbitrary elements. We denote the super-vector space $V$ with $V_{\oa} = \ds K^m$ and $V_{\ob} = \ds K^n$ as $\ds K^{m|n}$. 

\section{The Lie superalgebra $D(2,1;\alpha)$} \label{D(2,1,alpha)}
\subsection{The construction of $D(2,1;\alpha)$}

There is a one-parameter family of $17$-dimensional Lie superalgebras of rank $3$ which are deformations of $D(2,1)=\mathfrak{osp}(4|2)$. These Lie superalgebras can be defined using a construction of Scheunert. We will use the notations of \cite{Mu}, where also more details can be found. 

Let $V$ be a two dimensional vector space with basis $u_{+}$ and $u_-$. Let $\psi$ be a non-degenerate skew-symmetric bilinear form with $\psi(u_+,u_-)=1$. Consider $\mathfrak{sl}(V)= \mathfrak{sp}(\psi)$ the algebra of linear transformations preserving $\psi$. Denote by $(V_i, \psi_i)$, $i=1,2,3$,  three copies of $(V,\psi)$. 

We will use the following definition of a Lie superalgebra to define $D(2,1;\alpha)$.
\begin{Def}[Lie superalgebra]
A super-vector space $\g=\g_{\oa}\oplus \g_{\ob}$ is a Lie superalgebra if
\begin{enumerate}
\item $\g_{\oa}$ is a Lie algebra.
\item $\g_{\ob}$ is a $\g_{\oa}$-module.
\item There exists a $\g_{\oa}$-morphism $p:S^2(\g_{\ob})\rightarrow \g_{\oa}$, with $S^2(\g_{\ob})$ the symmetric tensor power.
\item For all $a,b,c \in \g_{\ob}$ the morphism $p$ satisfies \[ [p(a,b),c]+[p(b,c),a]+[p(c,a),b]=0, \] where we denoted the $\g_{\oa}$-action on $\g_{\ob}$ by $[\cdot,\cdot]$. \label{Jacobi identity}
\end{enumerate}
\end{Def}

Set 
\[
\g_{\oa} = \mathfrak{sp}(\psi_1) \oplus \mathfrak{sp}(\psi_2)\oplus \mathfrak{sp}(\psi_3)
\]
and
\[
\g_{\ob}= V_1 \otimes V_2 \otimes V_3.
\]
The action of $\g_{\oa}$ on $\g_{\ob}$ is given by the outer tensor product:
\[
(A,B,C)\cdot  x\otimes y \otimes z = A (x)\otimes y \otimes z+ x\otimes B(y) \otimes z+x\otimes y \otimes C(z).
\]
Define $p_i\colon V_i \times V_i \to \mathfrak{sp}(\psi_i)$ by
\[ p_i(x,y)z= \psi_i(y,z) x- \psi_i(z,x)y.\] 
  For $\sigma_i \in \ds K$ we define the $\g_{\oa}$-morphism $p$ by 
 \begin{align*}
  p(x_1\otimes x_2\otimes x_3,y_1\otimes y_2\otimes y_3)&= \sigma_1 \psi_2(x_2,y_2)\psi_3(x_3,y_3) p_1(x_1,y_1) \\
  &\quad +\sigma_2 \psi_3(x_3,y_3)\psi_1(x_1,y_1) p_2(x_2,y_2)\\
  &\quad +\sigma_3 \psi_1(x_1,y_1)\psi_2(x_2,y_2) p_3(x_3,y_3).
  \end{align*}
The morphism $p$ satisfies condition (\ref{Jacobi identity}) in the definition of a Lie superalgebra if and only if $\sigma_1+\sigma_2+\sigma_3=0$, see \cite[Lemma 4.2.1]{Mu}.

So in that case the algebra $\Gamma(\sigma_1,\sigma_2,\sigma_3)= \g_{\oa}\oplus\g_{\ob}$ is a Lie superalgebra. 
We have \[ \Gamma(\sigma_1,\sigma_2,\sigma_3)\cong \Gamma(\sigma_1',\sigma_2',\sigma_3')
\]
if and only if there is a non-zero scalar $c$ and a permutation $\pi$ of $(1,2,3)$ such that $\sigma_i'= c \sigma_{\pi(i)}$, \cite[Lemma 5.5.16]{Mu}. If $\sigma_i=0$ for $i=1,2$ or $3$, then $\Gamma(\sigma_1,\sigma_2,\sigma_3)$ contains an ideal $I$ such that $\Gamma(\sigma_1,\sigma_2,\sigma_3) / I \cong \mathfrak{sl}(V_i),$ as one can easily deduce from the definition of $p$.

Define \[D(2,1;\alpha) := \Gamma\left(\frac{1+\alpha}{2},\frac{-1}{2},\frac{-\alpha}{2}\right). \]
Assume $\alpha \not\in \{-1,0\}$, then $D(2,1;\alpha)$ is simple and $D(2,1;\alpha)\cong D(2,1;\beta)$ if and only if $\beta$ is in the same orbit as $\alpha$ under the transformations $\alpha \mapsto \alpha^{-1}$ and $\alpha \mapsto -1-\alpha$. 

\subsection{Roots of $D(2,1;\alpha)$} 
Consider the following matrix realisations for the basis elements $\{E_i,F_i,H_i\}$ of $\mathfrak{sl}(V_i)$.
\[
E_i= \left(\begin{array}{rr}
0 & 1 \\
0 & 0
\end{array}\right), \quad F_i =  \left(\begin{array}{rr}
0 & 0 \\
1 & 0
\end{array}\right),\quad  H_i=\left(\begin{array}{rr}
1 & 0 \\
0 & -1
\end{array}\right).
\] 
Then the realisation of the vector space $V_i$ is given by
\[
u_+^i = (1,0)^t , \quad u_-^i = (0,1)^t.
\]
In this realisation we have 
\[
p_i(u_+^i,u_+^i)= 2 E_i, \quad p_i(u_+^i,u_-^i)=-H_i, \quad p_i(u_-^i,u_-^i)= -2 F_i.
\]

The Cartan subalgebra of $D(2,1;\alpha)$ is given by $\mathfrak{h}=\langle H_1,H_2,H_3 \rangle.$ If we define $\mathfrak{h}^\ast=\{\dea,\deb, \dec \}$ by \[
\delta_i (H_j) = \delta_{ij},\]
then the even and odd roots are given by
\[
\Delta_{\ol 0} = \{ \pm 2\dea, \pm 2\deb , \pm 2\dec \}, \qquad \qquad \Delta_{\ol 1} = \{ \pm \dea \pm \deb \pm \dec \}.
\]
The corresponding root vectors are \[
X_{2\delta_i} = E_i, \quad X_{-2\delta_i} = F_i, \quad X_{\pm\dea\pm\deb\pm\dec} = u_{\pm}^1 \otimes u_{\pm}^2 \otimes u_{\pm}^3.
\]

Consider the simple root system 
\[
\Pi = \{ 2\deb, \dea-\deb-\dec, 2\dec \},
\]
then the Cartan matrix is given by
\begin{align*}
\begin{pmatrix}
2 & -1 & 0 \\
-1 & 0 & -\alpha \\
0 &-1 & 2
\end{pmatrix},
\end{align*}
see \cite[Sections 4.2 and 5.3.1]{Mu}. For $\alpha=-1$ this Cartan matrix corresponds to the Lie superalgebra $A(1,1)=\mathfrak{psl}(2|2)$. 
Remark that $D(2,1;-1)$ contains an ideal $I$ with $I \cong \mathfrak{psl}(2|2) $ and $D(2,1;-1)/ I \cong \mathfrak{sl}(2)$, see \cite{Se}.  
For the values $\alpha= 1,-2,$ or $-1/2$, $D(2,1;\alpha)$ is isomorphic to $\mathfrak{osp}(4|2)$.
For more information on irreducible representations of $D(2,1;\alpha)$, see \cite{Jo}.

\subsection{Three grading}\label{Three grading}

Consider the short subalgebra  
\begin{align}\label{Eqsl2triple}
\{ X_{2\deb} + X_{2\dec}, H_{2}+ H_{3}, X_{-2\deb}+X_{-\dec} \}.
\end{align}
This subalgebra is isomorphic to $\mathfrak{sl}(2)$ and the decomposition of $D(2,1;\alpha)$ as eigenspaces under ad$(H_{2}+H_{3})$ gives a $3$-grading on $D(2,1;\alpha)$:
\begin{align*} 
\g_{\plus} &= \{ X_{2\dec} ,  X_{2\deb}, X_{-\dea+\deb+\dec} ,X_{\dea+\deb+\dec}\} \\
\g_{\minus} &=\{ X_{-2\dec} ,  X_{-2\deb},X_{\dea-\deb-\dec}, X_{-\dea-\deb-\dec} \}  \\
\g_0 &=  \{   H_1, H_2,H_3, X_{2\dea}, X_{-\dea+\deb-\dec},X_{\dea+\deb-\dec},X_{-2\dea}, X_{\dea-\deb+\dec},X_{-\dea-\deb+\dec} \}.
\end{align*}

Set~$h:=H_{2}+H_{3}\in \mf h\subset\mf g_0$.
For $\alpha \not\in  \{0,-1\}$,  $\mf g_0\cong\mathfrak{osp}(2|2)\oplus\ds K h$, where the ideal $\ds K h$ is the centre of $\mf g_0$ and $\mathfrak{osp}(2|2)$ is simple.

\subsection{Real forms}

The complex Lie superalgebra $\g_\mathds{C} = D_\C(2,1;\alpha)$ has three different real forms, \cite[Theorem 2.5]{Pa}.
\begin{itemize}
\item $\g_{\oa}= \mathfrak{sl}(2,\ds R)\oplus \mathfrak{sl}(2,\ds R) \oplus \mathfrak{sl}(2,\ds R),$
\item $\g_{\oa}=\mathfrak{su}(2)\oplus \mathfrak{su}(2) \oplus \mathfrak{sl}(2,\ds R),$
\item $\g_{\oa}= \mathfrak{sl}(2,\ds R)\oplus \mathfrak{sl}(2,\ds C) $.
\end{itemize}
We will use the real Lie superalgebra corresponding to $\g_{\oa}= \mathfrak{sl}(2,\ds R)\oplus \mathfrak{sl}(2,\ds R) \oplus \mathfrak{sl}(2,\ds R)$. 

\section{The Jordan superalgebra $D_\alpha$}\label{Section Jordan}

From a Lie (super)algebra equipped with a short subalgebra one can construct a Jordan (super)algebra. Conversely if one has a Jordan (super)algebra, we obtain a three graded Lie (super)algebra by means of a TKK-construction, \cite{CK}. 
In the classical case Jordan algebras and these TKK-constructions were a crucial ingredient in the unified approach to construct minimal representations for the corresponding Lie superalgebras \cite{HKM, HKMO}. Similarly, a Jordan superalgebra and its associated TKK-algebra were used in \cite{BF} to construct a minimal representation of $\mf{osp}(p,q|2n)$. In this section, we will introduce the Jordan superalgebra $D_\alpha$ associated with $D(2,1;\alpha)$. Applying the TKK-construction on $D_\alpha$ gives us a realisation of $D(2,1;\alpha)$ and also introduces an interesting subalgebra, namely the structure algebra.

\subsection{Definition}

\begin{Def}
A \textbf{Jordan superalgebra} is a supercommutative superalgebra $J$ satisfying the Jordan identity
\begin{align*}
(-1)^{|x||z|}[L_{x}, L_{yz}]+(-1)^{|y||x|}[L_{y}, L_{zx}]+(-1)^{|z||y|}[L_{z}, L_{xy}]=0 \text{ for all } x,y,z \in J.
\end{align*}
Here the operator $L_x$ is (left) multiplication with $x$ and $[\cdot\,,\cdot]$ is the supercommutator, i.e. $[L_x,L_y] := L_xL_y - (-1)^{|x||y|}L_yL_x$.
\end{Def}

We use the realisation of $D_\alpha$ given in \cite{CK}. Then $D_\alpha$ is a unital Jordan superalgebra of dimension $(2|2)$ with
\[
(D_\alpha)_{\oa} := \ds K e_1 + \ds K e_2 \text{ and } (D_\alpha)_{\ob} := \ds K \xi + \ds K \eta.
\]
The multiplication table is given by
\[
e_i e_i =e_i, \quad e_1e_2=0, \quad e_i \xi = \tfrac{1}{2} \xi, \quad e_i \eta = \tfrac{1}{2}\eta, \quad \xi \eta= e_1 + \alpha e_2.
\]
For $\alpha\neq 0$ the Jordan superalgebra $D_\alpha$ is simple and $D_\alpha \cong D_{\alpha^{-1}}$. Remark that the unit is given by $1=e_1+e_2$.

If $\alpha=-1$, then $D_\alpha \cong JGL(1|1)$, the full linear Jordan superalgebra of $(1|1)\times (1|1)$ matrices with Jordan product 
\[
a \cdot b = \frac{ab + (-1)^{\abs{a}\abs{b}} ba}{2}.
\]
The isomorphism is given by 
\[
\quad  e_1=\left(\begin{array}{c|c}
1 & 0 \\ \hline
0 & 0
\end{array}\right), \quad  e_2=\left(\begin{array}{c|c}
0 & 0 \\ \hline
0 & 1
\end{array}\right), \quad 
\xi= \left(\begin{array}{c|c}
0 & \sqrt{2} \\ \hline
0 & 0
\end{array}\right), \quad \eta =  \left(\begin{array}{c|c}
0 & 0 \\ \hline
\sqrt{2} & 0
\end{array}\right).
\] 

Consider the three grading on $D(2,1;\alpha)=\g_{\minus}\oplus \g_{0}\oplus \g_{\plus}$ introduced in Section \ref{Three grading}. Then 
 $\g_{\plus}$ can be equipped with the structure of a Jordan superalgebra:
\[
x \cdot y = \frac{1}{2}[[x, X_{-2\deb}+X_{-2\dec} ],y], \text{ for } x,y \in \g_{\plus}
\]

\begin{lemma} \label{Inverse TKK}
The Jordan superalgebra $D_\alpha$ is isomorphic to the Jordan superalgebra $\mf g_{\plus}$. An explicit isomorphism is given by
\[
e_1=X_{2\deb}, \quad e_2=X_{2\dec},\quad \xi= X_{-\dea+\deb+\dec}, \quad \eta= 2X_{\dea+\deb+\dec}.
\]
\end{lemma}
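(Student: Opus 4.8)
The plan is to verify directly that the proposed linear map is an isomorphism of Jordan superalgebras. Since both $D_\alpha$ and $\g_{\plus}$ are $(2|2)$-dimensional with the specified homogeneous bases, the map sending $e_1 \mapsto X_{2\deb}$, $e_2 \mapsto X_{2\dec}$, $\xi \mapsto X_{-\dea+\deb+\dec}$, $\eta \mapsto 2X_{\dea+\deb+\dec}$ is a parity-preserving linear bijection by construction. What remains is to check that it respects the Jordan products, i.e. that the images satisfy the same multiplication table as $e_1, e_2, \xi, \eta$.

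Let me sketch the computation. The Jordan product on $\g_{\plus}$ is $x \cdot y = \tfrac12[[x, X_{-2\deb}+X_{-2\dec}], y]$, so I would first compute the inner brackets $[X_{2\deb}, X_{-2\deb}+X_{-2\dec}]$, $[X_{2\dec}, X_{-2\deb}+X_{-2\dec}]$, and the analogous brackets for the odd root vectors, using the root-vector structure and the matrix realisations $H_i, E_i, F_i$ together with $p_i(u_+^i,u_-^i) = -H_i$ from Section \ref{D(2,1,alpha)}. For instance, $[X_{2\deb}, X_{-2\deb}] = [E_2, F_2] = H_2$ and $[X_{2\deb}, X_{-2\dec}] = 0$ since these live in commuting $\mathfrak{sl}(2)$ factors, so the inner bracket for $e_1$ reduces to $H_2$; then $\tfrac12[H_2, X_{2\deb}] = \tfrac12[H_2, E_2] = E_2 = X_{2\deb}$, reproducing $e_1 e_1 = e_1$. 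I would run through each entry of the table this way. The genuinely $\alpha$-dependent check is $\xi\eta = e_1 + \alpha e_2$: this forces a bracket $[[X_{-\dea+\deb+\dec}, X_{-2\deb}+X_{-2\dec}], 2X_{\dea+\deb+\dec}]$ whose evaluation uses the explicit morphism $p$ with the chosen parameters $\sigma_1 = \tfrac{1+\alpha}{2}$, $\sigma_2 = -\tfrac12$, $\sigma_3 = -\tfrac{\alpha}{2}$, so that the coefficients $\sigma_2$ and $\sigma_3$ produce precisely the relative weight $1 : \alpha$ between $e_1 = X_{2\deb}$ and $e_2 = X_{2\dec}$. The factor of $2$ in $\eta = 2X_{\dea+\deb+\dec}$ is what makes this normalisation come out exactly, and pinning down that factor is the reason for the explicit scaling.

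\emph{The main obstacle} will be bookkeeping the odd-root-vector brackets correctly: these require expanding $X_{\pm\dea\pm\deb\pm\dec} = u_\pm^1 \otimes u_\pm^2 \otimes u_\pm^3$ and feeding them into the formula for $p$, keeping careful track of the signs coming from the skew-symmetric forms $\psi_i(u_+^i, u_-^i) = 1$ and of the sign conventions in the supercommutator. Because the underlying super Jacobi identity guarantees that $\g_{\plus}$ is already a Jordan superalgebra (as asserted before the lemma), there is no need to reverify the Jordan identity itself — the entire content of the lemma is the matching of the two multiplication tables, so it suffices to confirm the five products $e_ie_i$, $e_1e_2$, $e_i\xi$, $e_i\eta$, $\xi\eta$ in the $\g_{\plus}$ realisation agree with those of $D_\alpha$. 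I would present the even–even and even–odd products briefly and devote the detailed calculation to $\xi\eta$, since that is where the parameter $\alpha$ enters and where the choice of $\sigma_i$ and the normalising factor $2$ are essential.
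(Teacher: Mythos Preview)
Your proposal is correct and matches the paper's approach exactly: the paper's proof is the single sentence ``This follows from a straightforward verification,'' and what you outline is precisely that verification, carried out in detail. Your identification of the $\xi\eta$ product as the key $\alpha$-dependent check (and the reason for the normalising factor $2$ in $\eta$) is accurate and is the only part of the computation with any real content.
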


\begin{proof}
This follows from a straightforward verification.
\end{proof}

\subsection{The structure algebra $\mathfrak{str}(D_\alpha)$}
\begingroup
\allowdisplaybreaks
For a Jordan superalgebra $J$, we set
\begin{align*}
\mf{str}(J) &:= \langle L_x \mid x\in J \rangle \oplus \Der(J),\\
\mf{istr}(J) &:= \langle L_x \mid x\in J \rangle \oplus \Inn(J),
\end{align*}
with $\Der(J) := \langle D\in \End(J) \mid [D,L_x] = L_{D(x)} \text{ for all } x\in J \rangle$ the space of derivations of $J$ and  $\Inn(J) := \langle [L_x,L_y] \mid x,y\in J \rangle\subseteq \Der(J)$ the space of inner derivations of $J$. If $\alpha\neq -1$, then one can verify that every derivation of $D_\alpha$ is an inner derivation, i.e., we have $\Inn(D_\alpha) = \Der(D_\alpha)$ for $\alpha\neq -1$.

Set \begin{align*}
e_1 &=\left(1,\,0,\,0,\,0\right)^t,& e_2 &=\left(0,\,1,\,0,\,0\right)^t,\\
\xi & = \left(0,\,0,\,1,\,0\right)^t,& \eta &= \left(0,\,0,\,0,\,1\right)^t. \end{align*}
For $\alpha\neq -1$, a matrix realisation of $\mathfrak{istr}(D_\alpha)=\mathfrak{str}(D_\alpha)$ is given by 
\begin{align*}
2L_{e_1} &=\left(\begin{array}{cc|cc}
2 & 0 & 0 & 0 \\
0 & 0 & 0 & 0 \\ \hline
0 & 0 & 1 & 0 \\
0 & 0 & 0 & 1
\end{array}\right), &\quad 
2L_{e_2}&= \left(\begin{array}{cc|cc}
0 & 0 & 0 & 0 \\
0 & 2 & 0 & 0 \\ \hline
0 & 0 & 1 & 0 \\
0 & 0 & 0 & 1
\end{array}\right)
, \\
2L_{\xi} &=\left(\begin{array}{cc|cc}
0 & 0 & 0 & 2 \\
0 & 0 & 0 & 2\alpha \\ \hline
1 & 1 & 0 & 0 \\
0 & 0 & 0 & 0
\end{array}\right), &\quad 
2L_{\eta}&= \left(\begin{array}{cc|cc}
0 & 0 & -2 & 0 \\
0 & 0 & -2\alpha & 0 \\ \hline
0 & 0 & 0 & 0 \\
1 & 1 & 0 & 0
\end{array}\right)
, \\
 4[L_{e_1},L_{\xi}]&=\left(\begin{array}{cc|cc}
0 & 0 & 0 & 2 \\
0 & 0 & 0 & -2\alpha \\ \hline
-1 & 1 & 0 & 0 \\
0 & 0 & 0 & 0
\end{array}\right), &\quad
4[L_{e_1},L_{\eta}]&= \left(\begin{array}{cc|cc}
0 & 0 & -2 & 0 \\
0 & 0 & 2\alpha & 0 \\ \hline
0 & 0 & 0 & 0 \\
-1 & 1 & 0 & 0
\end{array}\right), 
\\
4[L_{\xi},L_{\xi}]&=\left(\begin{array}{cc|cc}
0 & 0 & 0 & 0 \\
0 & 0 & 0 & 0 \\ \hline
0 & 0 & 0 &  4(\alpha + 1) \\
0 & 0 & 0 & 0
\end{array}\right),&\quad
4[L_{\eta},L_{\eta}]&=\left(\begin{array}{cc|cc}
0 & 0 & 0 & 0 \\
0 & 0 & 0 & 0 \\ \hline
0 & 0 & 0 & 0 \\
0 & 0 & - 4( \alpha +1) & 0
\end{array}\right),
\end{align*}\vspace{-1em}\begin{align*}
4[L_{\xi},L_{\eta}]=\left(\begin{array}{cc|cc}
0 & 0 & 0 & 0 \\
0 & 0 & 0 & 0 \\ \hline
0 & 0 & -2(\alpha+1) &  0 \\
0 & 0 & 0 & 2(\alpha+1)
\end{array}\right).
\end{align*}
Note that $[L_{e_1},L_{e_2}]=0$, $[L_{e_1},L_{\xi}]=-[L_{e_2},L_{\xi}]$ and $[L_{e_1},L_{\eta}]=-[L_{e_2},L_{\eta}]$. The matrix realisation of $\mathfrak{istr}(D_{-1})$ is the same but note that 
\[
[L_{\xi},L_{\xi}] = [L_\eta,L_\eta]=[L_\xi,L_\eta]=0.
\]
For the matrix realisation of $\mathfrak{str}(D_{-1})$ we add the following three derivations
\begin{align*}
d_-&:=\left(\begin{array}{cc|cc}
0 & 0 & 0 & 0 \\
0 & 0 & 0 & 0 \\ \hline
0 & 0 & 0 &  1 \\
0 & 0 & 0 & 0
\end{array}\right),&\quad
d_+&:=\left(\begin{array}{cc|cc}
0 & 0 & 0 & 0 \\
0 & 0 & 0 & 0 \\ \hline
0 & 0 & 0 & 0 \\
0 & 0 & 1 & 0
\end{array}\right),\\
d_0&:=\left(\begin{array}{cc|cc}
0 & 0 & 0 & 0 \\
0 & 0 & 0 & 0 \\ \hline
0 & 0 & -1 &  0 \\
0 & 0 & 0 & 1
\end{array}\right)
\end{align*}
to the matrix realisation of $\mathfrak{istr}(D_{-1})$.
\endgroup
\subsection{The TKK-construction for $D_\alpha$}
With each Jordan (super)algebra one can associate a $3$-graded Lie (super)algebra via the TKK-construction. There exist different TKK-constructions in the literature, see \cite{BC2} for an overview. For $D_\alpha$ with $\alpha\neq -1$ all constructions lead to $D(2,1;\alpha)$, while for $D_{-1}$ we either get $\mathfrak{psl}(2|2)$ or $D(2,1;-1)$.
As a super-vector space $\TKK(D_\alpha)$ is given by  $\TKK(D_\alpha)= D_\alpha^{\minus} \oplus \mathfrak{str} (D_\alpha) \oplus D_\alpha^{\plus}$ with $ D_\alpha^{\minus}$ and $ D_\alpha^{\plus}$ two copies of $ D_\alpha$. For $x,y \in D_\alpha^{\plus}$, $u,v\in D_\alpha^{\minus}$, $a,b \in D_\alpha$, $I\in \Der(D_\alpha)$ the Lie brackets are defined as
\begin{align*}
[L_a, x] &= ax \in  D_\alpha^{\plus} & [I,x]&=Ix \in  D_\alpha^{\plus}\\
[L_a, u] &= -au \in  D_\alpha^{\minus} & [I,u]&=Iu \in  D_\alpha^{\minus}\\
[x,u]&=2L_{xu}+2[L_x,L_u] \\
[x,y]&=0=[u,v],
\end{align*}
and embed $\mathfrak{str}(D_\alpha)$ as a subalgebra into $\TKK(D_\alpha)$. 

We will use the notation $f_1, f_2, \zeta$ and $\theta$ instead of $e_1,e_2, \xi$ and $\eta$ for the generators of $D_\alpha^{\minus}$. Then, for $\alpha\not= -1$, an explicit morphism between $\TKK(D_\alpha)$ and the realisation of $D(2,1;\alpha)$ given in Section \ref{D(2,1,alpha)} is as follows.
\begin{itemize}
\item For $D_\alpha^{\minus}$
\begin{align*}
f_1= X_{-2\deb}, \quad f_2= X_{-2\dec}, \quad \zeta=  -X_{-\dea- \deb -\dec} \quad \theta=  -2X_{\dea-\deb-\dec} .
\end{align*}
\item For $D_\alpha^{\plus}$
\begin{align*}
e_1= X_{2\deb}, \quad e_2= X_{2\dec}, \quad \xi =  X_{-\dea+ \deb +\dec} \quad \eta=  2X_{\dea+\deb+\dec} .
\end{align*}
\item For $\mf{str}(D_\alpha)$
\begin{align*}
2L_{e_1} &= H_{2}, & 2L_{e_2} &=H_{3},\\
2L_\xi &= -X_{-\delta_1-\delta_2+\delta_3}-X_{-\delta_1+\delta_2-\delta_3}, & 2L_\eta &= -2(X_{\delta_1-\delta_2+\delta_3}+X_{\delta_1+\delta_2-\delta_3}),\\
4[L_{e_1}, L_\xi] &= X_{-\delta_1-\delta_2+\delta_3}-X_{-\delta_1+\delta_2-\delta_3}, & 4[L_{e_1}, L_\eta] &= 2(X_{\delta_1-\delta_2+\delta_3}-X_{\delta_1+\delta_2-\delta_3}),\\
4[L_\xi, L_\xi] &= 2(1+\alpha)X_{-2\delta_1}, & 4[L_\eta, L_\eta] &= -8(1+\alpha)X_{2\delta_1},\\
4[L_\xi, L_\eta] &= 2(1+\alpha)H_1.
\end{align*}
\end{itemize}

An explicit morphism between $\TKK(D_{-1})$ and $D(2,1;-1)$ is given by the same isomorphism and for the extra derivations $d_-, d_+, d_0$ we have
\begin{align*}
d_- &= X_{-2\delta_1}, & d_+ &= X_{2\delta_1},\\
d_0 &= H_1.
\end{align*}

An alternative TKK-construction is given by $\overline{\TKK}(D_\alpha)= D_\alpha^{\minus} \oplus \mathfrak{istr} (D_\alpha) \oplus D_\alpha^{\plus}$. For $\alpha \neq-1$ this is the same construction. For $\alpha=-1$ we now have $\overline{\TKK}(D_{-1}) \cong \mathfrak{psl}(2|2)$, with the same isomorphism as for $D(2,1;-1)$, but without $d_-$, $d_+$ and $d_0$. 

\begin{Remark}
Although $D(2,1;-1)$ and $D(2,1;0)$ are isomorphic and both have $\mathfrak{psl}(2|2)$ as a unique ideal, this isomorphism does not respect the three grading introduced in Section \ref{Three grading}.  In particular the top  of $D(2,1;-1)$ (i.e. the module  we obtain by quotienting out $\mathfrak{psl}(2|2)$) is contained in the zero part of the three grading, while the top of $D(2,1;0)$ is spread over the whole three grading. So one has to be careful in identifying these cases. For example, the corresponding Jordan superalgebras $D_{-1}$ and $D_{0}$ are not isomorphic (which follows easily from the fact that $D_{-1}$ is simple and $D_{0}$ is not.) Furthermore while $\mf{istr}(D_{-1})\not= \mf{str} (D_{-1})$ we have $\mf{istr}(D_{0})= \mf{str} (D_{0})$.     
\end{Remark}

\section{Two polynomial realisations}
\label{Section polynomial realisations}
In \cite{BC1} an explicit polynomial realisation for $D(2,1;\alpha)$ was constructed that is an analogue of the conformal representations considered in \cite{HKM}. In this section we repeat this construction and use it to define two models of a representation of $D(2,1;\alpha)$: the Fock model and the Schr\"odinger model. We start this section by introducing the Bessel operators which will play a crucial role in the polynomial realisation.
 
\subsection{The Bessel operator}

Let $\lambda$ be a character of $\mathfrak{str}(D_\alpha)$. Then the Bessel operator $\bessel$ maps an element of $D_\alpha^{\plus}$ to a differential operator acting on $S(D_\alpha^{\minus})$, the supersymmetric tensor power of $D_\alpha^{\minus}$. To define this Bessel operator we need the following two operators.

\begin{Def}
Consider a  character $\lambda\colon \mathfrak{str}(D_\alpha) \to \ds K$.  We define $\lambda_{u} \in (D_\alpha^{\plus})^\ast$ and $\widetilde{P}_{u,v} \in D_\alpha^{\minus}\otimes (D_\alpha^{\plus})^\ast $ for  $u,v$ in $D_\alpha^{\minus}$ by 
$$\lambda_{u}(x) := -\lambda(2L_{xu})$$
and
\[\widetilde{P}_{u,v}(x) := (-1)^{\abs{x}(\abs{u}+\abs{v})}(L_u L_v + (-1)^{|u||v|}L_vL_u-L_{uv})(x)\]
for all $x$ in $D_\alpha^{\plus}$. Then we define the \textbf{Bessel operator} as
\begin{align}\label{EqBess}
\bessel = \sum_{i=1}^4 \lambda_{f_i}\pt{f_i}+ \sum_{i,j=1}^4 \widetilde{P}_{f_i,f_j}\pt{f_j}\pt{f_i},
\end{align}
with $(f_i)_{i=1}^4$ a homogeneous basis of $D_\alpha^{\minus}$.
\end{Def}

Recall that for $\alpha\not\in \{0, -1\}$ we have $\mathfrak{str}(D_\alpha) \cong \mathfrak{osp}(2|2) \oplus \ds K h$ with $h = H_{2} + H_{3} = 2L_{e_1}+ 2L_{e_2}$. Therefore the character $\lambda$ is determined by the value of $\lambda(h)$. We  normalize this value as  $\lambda(h)= \frac{\alpha+1}{\alpha}\lambda $, with $\lambda \in \mathds{C}$. Then $\lambda(H_{2})= \lambda$ and $\lambda(H_{3})= \frac{\lambda}{\alpha}$ since $\lambda(H_{2}+H_{3})=\lambda(h)=\lambda \frac{\alpha+1}{\alpha}$ and $\lambda(H_{2}-\alpha H_{3})=0$. For $\alpha=0$ one verifies that $\lambda(H_2)=0$ and and the character is determined by $\lambda := \lambda(h) = \lambda(H_3)$. Similarly for $\alpha=-1$ we have $\lambda(H_{2}) = -\lambda(H_{3})$ and this value determines the character.  If we then set $\lambda := \lambda(H_{2})$, the results are analogous to the $\alpha\not\in \{-1,0\}$ case. 

For each basis element $x$ of $D_\alpha$ consider the dual element $x^\ast \in D_\alpha^\ast$ defined by sending $x$ to one and all other basis elements to zero.
\begin{lemma}\label{Lem Bessel}
We have
\begin{align*}
\lambda_{f_1} &= -\lambda e_1^\ast, \quad \lambda_{f_2} = \frac{-\lambda}{\alpha} e_2^\ast, \quad \lambda_{\zeta}=2 \lambda \eta^\ast, \quad \lambda_\theta = -2 \lambda \xi^\ast &\text{ if }\alpha\neq 0,\\
\lambda_{f_1} &= 0, \quad \lambda_{f_2} = -\lambda e_2^\ast, \quad \lambda_{\zeta}=0, \quad \lambda_\theta = 0 &\text{ if }\alpha = 0
\end{align*}
and 
\begin{gather*}
\widetilde{P}_{f_1,f_1 } = f_1 e_1^\ast, \quad  \widetilde{P}_{f_1,f_2 } = \frac{\zeta}{2} \xi^\ast+ \frac{\theta}{2} \eta^\ast, \quad \widetilde{P}_{f_2,f_2 } = f_2 e_2^\ast,  \\
\widetilde{P}_{f_1,\zeta } = \frac{\zeta}{2} e_1^\ast- f_1 \eta^\ast, \quad
\widetilde{P}_{f_1,\theta } = \frac{\theta}{2} e_1^\ast+ f_1 \xi^\ast, \\
\widetilde{P}_{f_2,\zeta } = \frac{\zeta}{2} e_2^\ast- \alpha f_2 \eta^\ast, \quad
\widetilde{P}_{f_2,\theta } = \frac{\theta}{2} e_2^\ast+ \alpha f_2 \xi^\ast, \\
\widetilde{P}_{\zeta,\theta } = \alpha f_2 e_1^\ast + f_1 e_2^\ast-(1+\alpha) \zeta \xi^\ast -(1+\alpha) \theta \eta^\ast \\
\widetilde{P}_{\zeta,\zeta } =0, \quad \widetilde{P}_{\theta,\theta }=0.
\end{gather*}
\end{lemma}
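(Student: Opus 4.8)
The plan is to compute both families of quantities directly from the multiplication table of $D_\alpha$ and the explicit identifications with $\mathfrak{str}(D_\alpha)$ recorded in Section~\ref{Section Jordan}, treating $D_\alpha^{\minus}$, $D_\alpha^{\plus}$ and $\mathfrak{str}(D_\alpha)$ as copies of the same underlying Jordan superalgebra so that every product reduces to $e_i e_i = e_i$, $e_1 e_2 = 0$, $e_i\xi = \tfrac12\xi$, $e_i\eta = \tfrac12\eta$, $\xi\eta = e_1 + \alpha e_2$, together with supercommutativity (so $\eta\xi = -\xi\eta$ and $\xi^2 = \eta^2 = 0$).

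For the functionals $\lambda_u$, the key observation is that $\lambda$ is a character, hence vanishes on the odd elements of $\mathfrak{str}(D_\alpha)$ and on every supercommutator $[L_x,L_y]$; it is therefore completely determined by $\lambda(2L_{e_1}) = \lambda(H_2) = \lambda$ and $\lambda(2L_{e_2}) = \lambda(H_3) = \lambda/\alpha$. I would thus evaluate $\lambda_u(x) = -\lambda(2L_{x\cdot u})$ by computing the Jordan product $x\cdot u$ for each basis vector $x$ of $D_\alpha^{\plus}$ and discarding every contribution that is odd or a supercommutator. Only the products $e_1\cdot e_1 = e_1$, $e_2\cdot e_2 = e_2$, $\xi\cdot\eta = e_1+\alpha e_2$ and $\eta\cdot\xi = -(e_1+\alpha e_2)$ survive, which immediately yields the four stated formulas; the degenerate case $\alpha = 0$ (where instead $\lambda(H_2) = 0$ and $\lambda(H_3) = \lambda$) is read off the same list.

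For the coefficients $\widetilde P_{u,v}$ I would apply the operator $L_uL_v + (-1)^{|u||v|}L_vL_u - L_{uv}$ to each of $x \in \{e_1,e_2,\xi,\eta\}$ (now viewed in $D_\alpha^{\plus}$), multiply by the prefactor $(-1)^{|x|(|u|+|v|)}$, and reinterpret the resulting element of $D_\alpha$ inside $D_\alpha^{\minus}$ via $e_i\mapsto f_i$, $\xi\mapsto\zeta$, $\eta\mapsto\theta$; the coefficient of each basis vector then records the corresponding $x^\ast$-component. The two identities $\widetilde P_{\zeta,\zeta} = \widetilde P_{\theta,\theta} = 0$ are immediate, since for $u$ odd the symmetrised operator $L_uL_u + (-1)^{|u||u|}L_uL_u = L_\xi L_\xi - L_\xi L_\xi$ vanishes and $\xi^2 = \eta^2 = 0$, so no computation is needed there.

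The one genuinely error-prone part, and the only real obstacle, is the sign bookkeeping: the Koszul signs inside $L_uL_v$, the supercommutativity sign in products such as $\eta\xi$, and the global factor $(-1)^{|x|(|u|+|v|)}$ must all be tracked simultaneously. I would organise the verification as a single table indexed by the pair $(u,v)$ and by $x$, paying particular attention to the mixed even/odd entries such as $\widetilde P_{f_1,\zeta}(\eta) = -f_1$ and the fully odd case $\widetilde P_{\zeta,\theta}$, where the cancellations between $L_uL_v$, $(-1)^{|u||v|}L_vL_u$ and $L_{uv}$ are most delicate. Once the sign conventions are fixed consistently, every remaining entry is a routine evaluation of the multiplication table.
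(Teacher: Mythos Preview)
Your proposal is correct and is exactly what the paper does: its proof reads in full ``This follows from straightforward calculations.'' You have merely spelled out how to organise those calculations and where the signs enter, which is appropriate for a lemma whose content is a direct evaluation of the definitions against the multiplication table of $D_\alpha$.
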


\begin{proof}
This follows from straightforward calculations.
\end{proof}

Using Lemma \ref{Lem Bessel} the Bessel operator (\ref{EqBess}) becomes
\begin{align*}
\mc B_\lambda &=-\lambda e_1^\ast \partial_{f_1}- \frac{\lambda}{\alpha} e_2^\ast\partial_{f_2}
+ 2 \lambda \eta^\ast\partial_{\zeta}- 2 \lambda \xi^\ast \partial_{\theta} \\
&\quad + f_1 e_1^\ast\partial_{f_1}\partial_{f_1 }+ (\zeta \xi^\ast+ \theta \eta^\ast)\partial_{f_2}\partial_{f_1}+ f_2 e_2^\ast\partial_{f_2}\partial_{f_2 } \\
&\quad +(\zeta e_1^\ast- 2f_1 \eta^\ast)\partial_{\zeta}\partial_{f_1 }
+( \theta e_1^\ast+2 f_1 \xi^\ast)\partial_{\theta}\partial_{f_1 } \\
&\quad +(\zeta e_2^\ast- 2 \alpha f_2 \eta^\ast)\partial_{\zeta}\partial_{f_2 }
+( \theta e_2^\ast+2 \alpha f_2 \xi^\ast)\partial_{\theta}\partial_{f_2 } \\
&\quad +2\left(\alpha f_2 e_1^\ast + f_1 e_2^\ast-(1+\alpha) \zeta \xi^\ast -(1+\alpha) \theta \eta^\ast\right)\partial_{\theta}\partial_{\zeta },
\end{align*}
for $\alpha\neq 0$. For $\alpha=0$ we have to replace occurences of $\frac{\lambda}{\alpha}$ by $\lambda$  and other occurences of   $\lambda$ by zero in this expression.  We obtain the following property of the Bessel operator from \cite[Proposition 4.2]{BC1}.

\begin{Prop}[Supercommutativity]
For all $a,b\in D_\alpha^+$ we have
\begin{align*}
\bessel(a)\bessel(b) = (-1)^{|a||b|}\bessel(b)\bessel(a).
\end{align*}
\end{Prop}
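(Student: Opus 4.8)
The plan is to prove the statement by showing that the supercommutator $[\bessel(a),\bessel(b)]:=\bessel(a)\bessel(b)-(-1)^{|a||b|}\bessel(b)\bessel(a)$ vanishes for all $a,b\in D_\alpha^{\plus}$; since $\bessel$ is linear in its argument, it suffices to verify this on a homogeneous basis $e_1,e_2,\xi,\eta$ of $D_\alpha^{\plus}$. First I would record that, for nonzero terms, $\lambda_{f_i}(a)\neq 0$ forces $|f_i|=|a|$ while $\widetilde{P}_{f_i,f_j}(a)$ has parity $|f_i|+|f_j|+|a|$, so that $\bessel(a)$ is a differential operator on $S(D_\alpha^{\minus})$ of order at most two and of homogeneous parity $|a|$. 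Writing $\bessel(a)=P_a+Q_a$ with $P_a=\sum_i\lambda_{f_i}(a)\pt{f_i}$ the first-order part (scalar coefficients) and $Q_a=\sum_{i,j}\widetilde{P}_{f_i,f_j}(a)\pt{f_j}\pt{f_i}$ the second-order part (coefficients in $D_\alpha^{\minus}$, acting by multiplication), the supercommutator splits as $[P_a,P_b]+[P_a,Q_b]+[Q_a,P_b]+[Q_a,Q_b]$, which I would then analyse by differential order.

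Two contributions vanish for purely formal reasons. Since the coefficients of $P_a,P_b$ are scalars and the partial derivatives supercommute on the supercommutative algebra $S(D_\alpha^{\minus})$, one checks immediately that $[P_a,P_b]=0$. Likewise, the top (order-four) part of $[Q_a,Q_b]$ is governed by the leading symbols, whose coefficients $\widetilde{P}_{f_i,f_j}(a),\widetilde{P}_{f_k,f_l}(b)\in D_\alpha^{\minus}$ supercommute as multiplication operators (the algebra $D_\alpha^{\minus}$ being supercommutative) and whose derivative factors also supercommute; hence the order-four terms of $Q_aQ_b$ and $(-1)^{|a||b|}Q_bQ_a$ agree and cancel in the graded commutator. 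After these cancellations only operators of order at most three survive, coming precisely from the graded Leibniz contributions in which a derivative of $P_a$ or $Q_a$ lands on a degree-one coefficient of $Q_b$, and symmetrically with $a$ and $b$ interchanged.

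The substance of the proof is therefore the vanishing of these order-three, order-two and order-one remainders. Here I would substitute the explicit expressions of Lemma \ref{Lem Bessel} for $\lambda_{f_i}$ and $\widetilde{P}_{f_i,f_j}$ and verify the cancellations directly on the finitely many pairs of basis vectors from $\{e_1,e_2,\xi,\eta\}$ (the even diagonal cases being trivial, the odd diagonal cases $\xi,\xi$ and $\eta,\eta$ amounting to $\bessel(a)^2=0$). Conceptually, these identities encode the Jordan superidentity for $D_\alpha$ together with the character property of $\lambda$, namely $\lambda([L_x,L_y])=0$, which is exactly what makes $\widetilde{P}$ graded-symmetric and compatible with $\lambda_u$.

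The main obstacle is the sign bookkeeping in these lower-order terms: one must carefully track the Koszul signs produced both when commuting a multiplication operator past a partial derivative and when a derivative hits a coefficient via the graded Leibniz rule, and then confirm that the factor $(-1)^{|a||b|}$ in the definition of the supercommutator is precisely what aligns the two families of terms so that they cancel in pairs. Finally, I would remark that this vanishing is the computational shadow of the fact that $[x,y]=0$ for $x,y\in D_\alpha^{\plus}$ in $\TKK(D_\alpha)$, i.e.\ that the top piece of the three-grading is abelian.
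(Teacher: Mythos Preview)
The paper does not actually prove this proposition; it simply quotes \cite[Proposition 4.2]{BC1}. Your proposal therefore supplies what the present paper omits: a self-contained argument via the explicit formulas of Lemma \ref{Lem Bessel}. The strategy you describe---split $\bessel(a)=P_a+Q_a$ into first- and second-order parts, observe that $[P_a,P_b]$ and the order-four piece of $[Q_a,Q_b]$ vanish for purely formal reasons (your parity observation $|f_i|=|a|$ whenever $\lambda_{f_i}(a)\neq 0$ is exactly what makes the signs line up), and then verify the surviving cross-terms on the six unordered basis pairs---is correct and will go through once the bookkeeping is written out.

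Your closing remark deserves a comment. The link with $[x,y]=0$ for $x,y\in\mf g_{\plus}$ is not merely a shadow: once $\pil$ is known to be a Lie superalgebra representation, the proposition is immediate from $[\bessel(a),\bessel(b)]=[\pil(a),\pil(b)]=\pil([a,b])=0$. Whether this counts as a shortcut depends on the logical order in \cite{BC1}; there the verification that $\pil$ respects the bracket on $\mf g_{\plus}\times\mf g_{\plus}$ \emph{is} the supercommutativity statement, so the direct computation you outline is ultimately what underlies the cited result. Your approach thus has the merit of making the paper self-contained at this point.
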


\subsection{A polynomial realisation}

Recall that we denote the complex unit by $\imath$. In Section 3 in \cite{BC1}, a polynomial representation of a three graded Lie superalgebra $\mg$ on $S(\mg_{\minus})$ depending on a character $\lambda$ of $\mg_0$ was constructed:
\begin{align*}
\pi(X)&= X,  &\text{ for } X \in \mg_{\minus}, \\
\pi(X)&= \lambda(X) + \sum_{i} [X ,X_i] \partial_{X_i},  &\text{ for } X \in \mg_0, \\
\pi(X) &= -\bessel (X), &\text{ for } X \in \mg_{\plus}. 
\end{align*}
Here $(X_i)_i$ is a homogeneous basis of $\mg_{\minus}$.

Applying this construction to the three grading of $D(2,1;\alpha)$ considered in Section \ref{Three grading} we obtain a representation of $D(2,1;\alpha)$ on $\mc S(f_1,{f_2},\zeta,\theta)\cong \mc P(\K^{2|2})$. Note that we applied an isomorphism induced by $f_1 \mapsto -2\imath f_1$, $f_2 \mapsto -2\imath f_2$, $\zeta \mapsto -2\imath \zeta$, $\theta \mapsto -2\imath \theta$ in order to make our representation more comparable to the representations found in \cite{HKMO} and \cite{BCD}.

For $\mf g_-$ we have
\begin{align*}
\pil(f_1) &= -2\imath f_1, &\pil(f_2) &= -2\imath f_2,\\
\pil(\zeta) &= -2\imath\zeta, &\pil(\theta) &= -2\imath\theta.
\end{align*}
For $\mf g_0$ we have
\begin{align*}
\pil(2L_{e_1}) &=  \lambda -2f_1\pt {f_1} - \zeta \pt \zeta - \theta\pt \theta,\\
\pil(2L_{e_2}) &=  \dfrac{\lambda}{\alpha} -2{f_2}\pt {f_2} - \zeta \pt \zeta - \theta\pt \theta,\\
\pil(2L_{\xi}) &=  -\zeta(\pt {f_1}+\pt {f_2}) - 2(f_1+\alpha {f_2})\pt \theta,\\
\pil(2L_{\eta}) &=  -\theta(\pt {f_1}+\partial_{f_2}) + 2(f_1+\alpha {f_2})\pt \zeta ,\\
\pil(4[L_{e_1},L_{\xi}]) &=  -\zeta(\pt {f_1}-\pt {f_2})+2(f_1 - \alpha {f_2})\pt \theta  ,\\
\pil(4[L_{e_1},L_{\eta}]) &=  -\theta(\pt {f_1} -\pt {f_2})- 2(f_1-\alpha {f_2})\pt \zeta,
\end{align*}
\begin{align*}
\pil(4[L_{\xi},L_{\xi}]) &= 4(1+\alpha)\zeta\pt \theta &\text{ or }\quad &\pil(d_-) = \zeta\pt \theta,\\
\pil(4[L_{\eta},L_{\eta}]) &=  -4(1+\alpha)\theta\pt \zeta &\text{ or }\quad &\pil(d_+) = \theta\pt \zeta,\\
\pil(4[L_{\xi},L_{\eta}]) &=  -2(1+\alpha)(\zeta\pt \zeta -\theta\pt \theta) &\text{ or }\quad &\pil(d_0) = \theta\pt \theta - \zeta\pt \zeta.
\end{align*}
For $\mf g_+$ we have
\begin{align*}
\pil(e_1) &= -\dfrac{\imath}{2}((-\lambda+f_1\pt {f_1} + \zeta\pt \zeta + \theta\pt \theta)\pt {f_1} -2\alpha {f_2} \pt \zeta \pt \theta), \\
\pil(e_2) &= -\dfrac{\imath}{2} ((-\dfrac{\lambda}{\alpha}+{f_2}\pt {f_2} + \zeta\pt \zeta + \theta\pt \theta)\pt {f_2} -2 f_1 \pt \zeta \pt \theta),\\
\pil(\xi) &= -\dfrac{\imath}{2} ((-2\lambda + 2f_1\pt {f_1} + 2\alpha {f_2}\pt {f_2} + 2(1+\alpha)\zeta\pt \zeta)\pt \theta + \zeta \pt {f_1} \pt {f_2}),\\
\pil(\eta) &= -\dfrac{\imath}{2} ((2\lambda - 2f_1\pt {f_1} - 2\alpha {f_2}\pt {f_2} - 2(1+\alpha)\theta\pt \theta)\pt \zeta + \theta \pt {f_1} \pt {f_2}).
\end{align*}

Note that we only showed the $\alpha\neq 0$ case. The $\alpha=0$ case is obtained by substituting every instance of $\lambda/\alpha$ by $\lambda$ and every other instance of $\lambda$ by $0$ in the above expressions.

From \cite[Proposition 5.3]{BC1} we obtain the following.

\begin{Prop}\label{propsimple}
Suppose $\alpha \not\in \{-1,0\}$. The representation $\pi_\lambda$ of $D(2,1;\alpha)$ on $\mc P(\ds \K^{2|2})$ is irreducible if and only if 
\[\lambda \not\in \ds N \quad\mbox{and}\quad \dfrac{\lambda}{\alpha}\not\in\ds N.\]
If either $\lambda\in \ds N $ or $\frac{\lambda}{\alpha}\in\ds N,$ the representation is indecomposable but not irreducible.
\end{Prop}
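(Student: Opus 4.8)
The plan is to reduce the statement to a classification of the singular vectors of $\pil$, exploiting that $\pil$ is a cyclic highest-weight type module. First I would record two structural facts. The central element $h=H_2+H_3=2L_{e_1}+2L_{e_2}$ of $\g_0$ acts under $\pil$ by $\lambda\tfrac{\alpha+1}{\alpha}-2E$, where $E=f_1\pt{f_1}+f_2\pt{f_2}+\zeta\pt\zeta+\theta\pt\theta$ is the Euler operator; since its eigenvalue $\lambda\tfrac{\alpha+1}{\alpha}-2d$ separates the homogeneous components $\mc P_d$ of distinct polynomial degree $d$, every submodule is automatically graded by degree. Second, because $\pil$ restricted to $\g_\minus$ is multiplication by the coordinates $f_1,f_2,\zeta,\theta$ (up to the scalar $-2\imath$), the constant $1$ generates the whole module, while $\mc P_0=\K\cdot 1$ is one-dimensional.

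From these two facts indecomposability is immediate, and holds regardless of $\lambda$: if $\mc P(\K^{2|2})=M_1\oplus M_2$ with both summands submodules (hence graded), the degree-zero parts split $\K\cdot 1$, so $1$ lies in one summand, which being a submodule containing the cyclic generator must be the whole space, forcing the other summand to vanish. Thus I would only need to decide irreducibility. Here I would use that any nonzero graded submodule $M$ contains a homogeneous element of minimal degree, and that such an element is necessarily annihilated by all of $\g_\plus$: its image under any degree-lowering operator $\pil(Y)$, $Y\in\g_\plus$, lies in $M$ in strictly lower degree and must vanish by minimality. Conversely, a positive-degree vector killed by $\g_\plus$ generates a submodule supported in degrees $\ge 1$, hence proper. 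Therefore $\pil$ is irreducible if and only if it has no \emph{singular vectors} of positive degree, i.e. no nonzero homogeneous $P$ with $\pil(e_1)P=\pil(e_2)P=\pil(\xi)P=\pil(\eta)P=0$.

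The core of the argument is then this classification. At resonance I would exhibit the singular vectors explicitly: a direct computation gives $\pil(e_1)f_1^{\,n}=-\tfrac{\imath}{2}\,n(n-1-\lambda)f_1^{\,n-1}$, while $f_1^{\,n}$ is annihilated by $\pil(e_2),\pil(\xi),\pil(\eta)$ for every $n$ (each of those operators differentiates in $f_2$, or in $\zeta$ and $\theta$). Hence if $\lambda\in\N$ the vector $f_1^{\,\lambda+1}$ is a positive-degree singular vector, and symmetrically $f_2^{\,\lambda/\alpha+1}$ is one when $\lambda/\alpha\in\N$; combined with the indecomposability above this yields the final sentence of the statement. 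For the converse I would show that when $\lambda\notin\N$ and $\lambda/\alpha\notin\N$ the system above forces $\deg P=0$. The cleanest route is to decompose each $\mc P_d$ into its $\g_0\cong\mathfrak{osp}(2|2)\oplus\K h$-isotypic components and to analyze the induced maps $\mc P_d\to\mc P_{d-1}$ given by $\g_\plus$; on each component this reduces to a rank-one computation along a single $\mathfrak{sl}(2)$- (or $\mathfrak{osp}(1|2)$-) string, where the connecting scalar is a product of linear factors in $\lambda$ and $\lambda/\alpha$ of the shape $n(n-1-\lambda)$ seen above, which is nonzero precisely off resonance.

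The main obstacle is this last classification: solving the coupled second-order system $\pil(e_1)P=\pil(e_2)P=0$ together with the odd equations $\pil(\xi)P=\pil(\eta)P=0$ in full generality, since the odd generators mix the even variables $f_1,f_2$ with the odd variables $\zeta,\theta$ and the two even Bessel operators are genuinely second order. Organizing the calculation through the $\g_0$-decomposition (equivalently, through the branching that is worked out later in the paper) is what makes the resonance condition $\lambda\in\N$ or $\lambda/\alpha\in\N$ transparent. In practice this is exactly the content supplied by the general irreducibility criterion \cite[Proposition 5.3]{BC1}, so I would either invoke that criterion after computing the character values $\lambda(H_2)=\lambda$ and $\lambda(H_3)=\lambda/\alpha$, or reprove it in this rank-three case by the string analysis just described.
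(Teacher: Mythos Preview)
Your proposal is correct and ultimately rests on the same source as the paper: the paper's entire proof of this proposition is the single sentence ``From \cite[Proposition 5.3]{BC1} we obtain the following,'' and you explicitly arrive at invoking that same criterion after computing $\lambda(H_2)=\lambda$ and $\lambda(H_3)=\lambda/\alpha$. Your write-up supplies more than the paper does---the grading argument for indecomposability via the cyclic vector $1$, the reduction to singular vectors, and the explicit singular vectors $f_1^{\lambda+1}$ and $f_2^{\lambda/\alpha+1}$ at resonance---but you correctly identify that the converse (absence of positive-degree singular vectors off resonance) is the substantive step, and for that you fall back on \cite{BC1} just as the paper does.
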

As discussed in \cite[Section 5]{BC1} the representations that ought to lead to `minimal representations' are the ones corresponding to quotients of $\pi_\lambda$ for the specific cases $\lambda=1$ and $\lambda=\alpha$. Let us consider these cases in more detail.
First assume $\alpha\neq 1$. An easy calculation shows that for $\alpha \neq 0$ the Bessel operators act trivially on 
\begin{align*}
V_\alpha&:=\{ a(2f_1f_2+\zeta\theta) + b{f_2^2} + c {f_2}\zeta +d {f_2}\theta \mid a,b,c,d \in \mathds{K} \} \subset \mathcal{P}_2 & \text{if } \lambda&=\alpha \text{ and }\\
V_1&:=\{ a (2\alpha f_1f_2+\zeta\theta) + b f_1^2 + c f_1\zeta + d f_1\theta \mid a,b,c,d \in \mathds{K} \} \subset \mathcal{P}_2 & \text{if } \lambda&=1.
\end{align*}
For $\alpha=0$ the Bessel operators act trivially on 
\begin{align*}
V_1 &:=\{ a(2f_1f_2+\zeta\theta) + b{f_2^2} + c {f_2}\zeta +d {f_2}\theta \mid a,b,c,d \in \mathds{K} \} \subset \mathcal{P}_2 & \text{if } \lambda&=1. \\
V_0 &:=\{ a f_1+ b f_2 +c \zeta + d \eta \mid a,b,c,d \in \mathds{K} \} = \mathcal{P}_1
& \text{ if } \lambda &=\alpha=0.  
\end{align*}

Furthermore, $V_\alpha$ and $V_1$ are $\mg_0$-modules.
It then follows from the Poincaré-Birkhoff-Witt theorem that
\begin{align}\label{Eq I lambda}
\mc I_\lambda := U(\mg_{\minus}) V_\lambda=  \mc{P}(\ds \K^{2|2}) V_\lambda
\end{align} is a submodule of $\pi_\lambda$ for $\lambda=\alpha$ or $\lambda=1$.  
In the spirit of \cite{HKMO} and \cite{BC1} the quotient representation of $D(2,1;\alpha)$ on $\mc{P} (\mathds{K}^{2 |2}) / \mc{I}_\lambda$ should be considered the analogue of minimal representation. In the rest of this paper we will take a closer look at this quotient representation.
 
Note that for $\lambda=\alpha=1$ the picture changes. We now have a one-dimensional $\mg_0$-submodule generated by $R^2=2f_1f_2+\zeta\theta$ and the logical quotient representation to study would be $\mc{P} (\mathds{K}^{2 |2}) / \langle R^2\rangle$. Since $D(2,1;1) \cong \mf{osp}(4|2)$ and $D_1$ is isomorphic to the spin factor Jordan superalgebra, this was already studied in a more general setting in \cite{BF} and \cite{BCD}.

\subsection{The Fock representation}\label{Section Cayley}

Consider the (real) Lie superalgebras
\begin{align*}
\mathfrak{g}&:= \TKK(D_{\alpha}) \cong D(2,1;\alpha),\\
\mf k &:= \{(a, I, -a) \mid I \in \Der(D_\alpha), a\in D_\alpha\}\\
\mf k_{0} &:= \{a\in \mf{str}(D_\alpha) \mid [a,e] = 0\} = \mf k\cap \mf{str}(D_\alpha) = \Der(D_\alpha), 
\end{align*}
where $e$ is the unit of $D_\alpha$.
\begin{Prop}
For $\alpha \in \ds R\setminus \{-1\}$ the following (real) algebras are isomorphic:
\begin{align*}
\mf k_0 \cong \mf{osp}(1,0|2), \quad \mf k \cong \mf{osp}(2,0|2)\oplus \R.
\end{align*}
For $\alpha = -1$ we have
\begin{align*}
\mf k_0 \cong \mf{sl}(2)\ltimes (\R\oplus\R), \quad \mf k \cong \mf{str}(D_{-1}) \cong \mf{sl}(2)\ltimes s(\mf{gl}(1|1)\oplus\mf{gl}(1|1))/\left<I_4\right>,
\end{align*}
where the action of $\mf{sl}(2)$ is adjoint action by using the embedding of $\mf{sl}(2)$ in $D(2,1;-1)$.
\end{Prop}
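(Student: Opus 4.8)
The plan is to treat $\mf k$ and $\mf k_0$ as concrete finite-dimensional Lie superalgebras whose generators and brackets are already fully explicit above, and to pin down each isomorphism by exhibiting a basis adapted to the target algebra and computing structure constants. I begin with $\mf k_0$. Since $\mf k_0=\Der(D_\alpha)$ and $\Der(D_\alpha)=\Inn(D_\alpha)$ for $\alpha\neq -1$, I would read off from the matrix realisation of $\mf{istr}(D_\alpha)$ the five inner derivations $[L_\xi,L_\xi],[L_\eta,L_\eta],[L_\xi,L_\eta]$ (even) and $[L_{e_1},L_\xi],[L_{e_1},L_\eta]$ (odd), using $[L_{e_1},L_{e_2}]=0$ and $[L_{e_1},L_\cdot]=-[L_{e_2},L_\cdot]$ to see that these already span $\Inn(D_\alpha)$. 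Restricted to the odd $2\times2$ block the three even ones act as $(1+\alpha)$ times a standard triple, with $[L_\xi,L_\eta]$ real-diagonalisable and $[L_\xi,L_\xi],[L_\eta,L_\eta]$ nilpotent; since $1+\alpha\neq 0$ this even part is the split $\mf{sp}(2,\R)=\mf{sl}(2,\R)$. Computing $[\text{even},\text{odd}]$ and the anticommutators $\{\text{odd},\text{odd}\}$ then shows the odd pair is the standard $2$-dimensional module and that $\{\text{odd},\text{odd}\}$ recovers the even part (again with the nonzero factor $1+\alpha$), which is exactly the defining data of $\mf{osp}(1,0|2)$.

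For $\mf k$ with $\alpha\neq -1$ I would first verify it is a subalgebra by expanding the $\TKK$-bracket of $\kappa(a,I):=(a,I,-a)$ and $\kappa(b,J)$. Using supercommutativity of $D_\alpha$ and the derivation identity $[I,L_b]=L_{Ib}$ one obtains the closed formula $[\kappa(a,I),\kappa(b,J)]=\kappa\bigl(Ib-Ja,\,[I,J]-4[L_a,L_b]\bigr)$, whose structure part lies in $\Inn(D_\alpha)=\mf k_0$. From this formula I would identify the direct summand $\R$ as the centre $\R\,\kappa(e,0)$, where $e$ is the unit so that $L_e=\Id$ and every bracket with $\kappa(e,0)$ vanishes, and the complementary ideal as an $\mf{osp}(2|2)$: its even part is $\la\kappa(e_1-e_2,0)\ra\oplus\Der(D_\alpha)_{\oa}=\mf{so}(2)\oplus\mf{sp}(2,\R)$ and its odd part is the $4$-dimensional span of $\kappa(\xi,0),\kappa(\eta,0)$ and the two odd derivations, with the $\{\text{odd},\text{odd}\}=\text{even}$ closure confirmed by the same formula. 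To fix the real form I would compute $\ad(\kappa(e_1-e_2,0))$ on this odd part: the formula together with $[L_{e_1},L_\xi]=-[L_{e_2},L_\xi]$ produces a block $\left(\begin{smallmatrix}0&s\\t&0\end{smallmatrix}\right)$ with $st<0$, hence purely imaginary eigenvalues, so the orthogonal factor is the compact $\mf{so}(2,0)$ and $\mf k\cong\mf{osp}(2,0|2)\oplus\R$.

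For $\alpha=-1$ the structure degenerates and must be handled separately. Here $[L_\xi,L_\xi]=[L_\eta,L_\eta]=[L_\xi,L_\eta]=0$, so $\Inn(D_{-1})$ collapses to the $2$-dimensional odd abelian space $\la[L_{e_1},L_\xi],[L_{e_1},L_\eta]\ra$, while $\Der(D_{-1})$ acquires the even outer generators $d_-,d_+,d_0$. I would check $\la d_-,d_+,d_0\ra\cong\mf{sl}(2)$ and that it acts on the abelian odd ideal as the standard module, giving $\mf k_0=\Der(D_{-1})\cong\mf{sl}(2)\ltimes(\R\oplus\R)$. For $\mf k$ itself I would reuse the bracket formula, now with $[L_a,L_b]$ nonzero only when exactly one argument is odd, and match the result through an explicit change of basis to the recorded matrix realisation of $\mf{str}(D_{-1})$, from which the presentation $\mf{sl}(2)\ltimes s(\mf{gl}(1|1)\oplus\mf{gl}(1|1))/\la I_4\ra$ can be read directly.

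The main obstacle is organisational rather than conceptual: keeping the $\TKK$ super-signs consistent while deriving the bracket formula for $\mf k$, and, most delicately, distinguishing the compact factor $\mf{so}(2,0)$ from the split $\mf{so}(1,1)$, a distinction that rests entirely on the sign of the product of off-diagonal entries of $\ad(\kappa(e_1-e_2,0))$. The degenerate case $\alpha=-1$, where the even inner derivations vanish and are replaced by the outer derivations $d_-,d_+,d_0$, is the other point demanding care, since the clean uniform argument for $\alpha\neq -1$ breaks down there.
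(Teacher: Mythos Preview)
Your approach is genuinely different from the paper's, and essentially sound. The paper does not argue structurally at all: it simply writes down an explicit isomorphism from $\mf k$ to a concrete matrix realisation of $\mf{osp}(2,0|2)\oplus\R$ using elementary matrices $E_{ij}$, treating the cases $\alpha<-1$ and $\alpha>-1$ separately (because of a factor $\sqrt{\pm 2(1+\alpha)}$) and then invoking $\mf{osp}(p,q|2)\cong\mf{osp}(q,p|2)$; the restriction to $\mf k_0$ falls out for free. For $\alpha=-1$ the paper gives the map $\mf k\to\mf{str}(D_{-1})$ explicitly and cites an external reference for the semidirect-product description. Your route---deriving a closed bracket formula for $\mf k$, isolating the centre $\R\,\kappa(e,0)$, and pinning down the real form via the spectrum of $\ad(\kappa(e_1-e_2,0))$ on the odd part---is more conceptual and avoids the case split on $\operatorname{sign}(1+\alpha)$.

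One point needs tightening. The complementary ideal is \emph{not} the naive subspace with even part $\la\kappa(e_1-e_2,0)\ra\oplus\Der(D_\alpha)_{\oa}$: your own bracket formula gives
\[
[\kappa(\xi,0),\,\kappa(0,[L_{e_1},L_\eta])]=\kappa\bigl([L_{e_1},L_\eta]\xi,\,0\bigr)=\kappa\bigl(\tfrac{1}{2}(-e_1+\alpha e_2),\,0\bigr),
\]
and $-e_1+\alpha e_2$ has a nonzero $e$-component $\tfrac{\alpha-1}{2}e$ whenever $\alpha\neq 1$. So the anticommutator of two odd elements of your candidate complement lands outside it. The correct complement is $[\mf k,\mf k]$, whose even $D_\alpha$-direction is generated by $\kappa(e_1-\alpha e_2,0)$ rather than $\kappa(e_1-e_2,0)$. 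Fortunately this does not damage your real-form argument: since $\kappa(e,0)$ is central, $\ad(\kappa(e_1-e_2,0))$ and $\ad(\kappa(e_1-\alpha e_2,0))$ agree up to a nonzero scalar, and your eigenvalue test (product of off-diagonals equal to $-4$, hence purely imaginary spectrum) goes through unchanged. You should either state that you are working in the quotient $\mf k/\R\kappa(e,0)$, or replace $e_1-e_2$ by $e_1-\alpha e_2$ in the description of the simple ideal.
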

\begin{proof}
A matrix realisation of $\mf{osp}(2,0|2)\oplus \R$ can be given by
\begin{align*}
\mf{osp}(2,0|2)\oplus \R = \left\lbrace \left(\begin{array}{ccc|cc}
a & 0 & 0 & 0 & 0 \\
0 & 0 & b & c & d \\
0 & -b & 0 & e &  f \\ \hline
0 & d & f & g & h\\
0 & -c & -e & i & -g
\end{array}\right)| a,b,c,d,e,f,g,h,i\in \ds R \right\rbrace.
\end{align*}
Denote by $E_{ij}$ the matrix with the $(i,j)$-th entry equal to one and all other entries zero. Then an explicit isomorphism between $\mf k$ and the matrix realisation of $\mf{osp}(2,0|2)\oplus \R$ is given by
\begin{align*}
e_1-f_1 &=E_{11}+E_{23}-E_{32}, & e_2-f_2 &= E_{11}-E_{23}+E_{32}\\
\xi-\zeta &= \sqrt{-2(1+\alpha)}(E_{25}+E_{42}), & \eta-\theta &= \sqrt{-2(1+\alpha)}(E_{52}-E_{24})\\
4[L_{e_1},L_\xi] &= \sqrt{-2(1+\alpha)}(E_{35}+E_{43}), & 4[L_{e_1},L_\eta]&= \sqrt{-2(1+\alpha)}(E_{53}-E_{34})\\
4[L_\xi,L_\xi] &= 4(1+\alpha)E_{45}, & 4[L_\eta,L_\eta]&= -4(1+\alpha)E_{54}\\
4[L_\xi,L_\eta] &= -2(1+\alpha)(E_{44}-E_{55}),
\end{align*}
for $\alpha < -1$. Note that this isomorphism also maps $\mf k_0$ to $\mf{osp}(1,0|2)$. Similarly, for $\alpha>-1$ an explicit isomorphism between $\mf k$ and $\mf{osp}(0,2|2)\oplus \R$ is given by
\begin{align*}
e_1-f_1 &=E_{11}+E_{23}-E_{32}, & e_2-f_2 &= E_{11}-E_{23}+E_{32}\\
\xi-\zeta &= \sqrt{2(1+\alpha)}(E_{42}-E_{25}), & \eta-\theta &= \sqrt{2(1+\alpha)}(E_{24}+E_{52})\\
4[L_{e_1},L_\xi] &= \sqrt{2(1+\alpha)}(E_{43}-E_{35}), & 4[L_{e_1},L_\eta]&= \sqrt{2(1+\alpha)}(E_{34}+E_{53})\\
4[L_\xi,L_\xi] &= 4(1+\alpha)E_{45}, & 4[L_\eta,L_\eta]&= -4(1+\alpha)E_{54}\\
4[L_\xi,L_\eta] &= -2(1+\alpha)(E_{44}-E_{55}),
\end{align*}
which maps $\mf k_0$ to $\mf{osp}(0,1|2)$. Since $\mf{osp}(p,q|2)\cong \mf{osp}(q,p|2)$ for all $p,q\in\ds\N$ this proves the $\alpha\neq -1$ case. For $\alpha=-1$ an explicit isomorphism between $\mf k$ and $\mf{str}(D_{-1})$ is given by
\begin{gather*}
e_1-f_1 =2L_{e_1},\quad e_2-f_2 = 2L_{e_2}, \quad \xi-\zeta = -2L_{\xi}, \quad \eta-\theta = -2L_{\eta}\\
[L_{e_1},L_\xi] = -[L_{e_1},L_\xi], \quad [L_{e_1},L_\eta]= -[L_{e_1},L_\eta], \quad d_- = d_-, \quad d_+ = d_+, \quad d_0 = d_0.
\end{gather*}
The isomorphism $\mf{str}(D_{-1}) \cong \mf{sl}(2)\ltimes s(\mf{gl}(1|1)\oplus\mf{gl}(1|1))/\left<I_4\right>$ follows from \cite[Section 6.1]{BC2}\\
\end{proof}
The representation $\pi_\lambda$ of $\mg$ on $\mc{P} (\mathds{C}^{2 |2}) / \mc{I}_\lambda$ has no $\mf{k}$-finite vectors. We will remedy this by twisting by a Cayley transform $c$. 
So we define our Fock model as the representation $\rho_\lambda := \pi_{\lambda} \circ c$. The analogue of this Fock model is studied in \cite{HKMO} for classical Lie algebras, and in \cite{BCD} for $\mf{osp}(m,2|2n)$.

Let $e := e_1+e_2$ and $f := f_1+f_2$ denote the units of $D_\alpha^+$ and $D_\alpha^-$, respectively and let $h = 2L_e$. Then $\{h, e, f\}$ corresponds to the $\mf{sl}(2)$-triple given in (\ref{Eqsl2triple}). 

Define the Cayley transform $c \in \End(\mf{g}_\C)$ as
\begin{align*}
c := \exp(\frac{\imath}{2}\ad(f))\exp(\imath\ad(e)).
\end{align*}

\begin{Prop} Using the decomposition $\mf{g}_\C= D^-_{\alpha,\C}\oplus \mathfrak{str}(D_{\alpha,\C})\oplus D^+_{\alpha,\C} $ we obtain the following explicit expression for the Cayley transform
\begin{itemize}
\item $c(a,0,0) = \left(\dfrac{a}{4}, \imath L_a, a\right)$
\item $c(0,L_a+I,0) = \left(\imath \dfrac{a}{4}, I, -\imath a\right)$
\item $c(0,0,a) = \left(\dfrac{a}{4}, -\imath L_a, a\right)$,
\end{itemize}
with $a\in D_{\alpha,\C}$ and $I\in \Der(D_{\alpha,\C})$. It induces a Lie superalgebra isomorphism:
\begin{align*}
c:\;\mathfrak{k}_\C \rightarrow \mathfrak{str}(D_{\alpha,\C}),\quad (a,I,-a) \mapsto I + 2\imath L_a.
\end{align*}
\end{Prop}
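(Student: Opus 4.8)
The plan is to compute $c$ directly on the three graded summands by exponentiating the nilpotent operators $\ad e$ and $\ad f$, and then to read off the restriction to $\mf k_\C$. First I would note that the $3$-grading $\mf g_\C = D^-_{\alpha,\C}\oplus \mathfrak{str}(D_{\alpha,\C})\oplus D^+_{\alpha,\C}$ forces $(\ad e)^3 = 0 = (\ad f)^3$, since $e\in D^+_{\alpha,\C}$ raises the grading by one and $f\in D^-_{\alpha,\C}$ lowers it by one. Hence both exponentials truncate to polynomials,
\[
\exp(\imath\ad e) = 1 + \imath\ad e - \tfrac12(\ad e)^2, \qquad \exp(\tfrac{\imath}{2}\ad f) = 1 + \tfrac{\imath}{2}\ad f - \tfrac18(\ad f)^2,
\]
so no convergence issue arises and the whole computation is finite.

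Next I would work out $\ad e$ and $\ad f$ on each graded piece from the TKK brackets, using crucially that $e$ and $f$ are both the Jordan unit $1$, so that $L_e = L_f = \id$, $eu = u$, $ex = x$, and every derivation $I$ annihilates the unit. A short calculation gives, for $u\in D^-_{\alpha,\C}$, $x\in D^+_{\alpha,\C}$ and $s = L_a + I\in \mathfrak{str}(D_{\alpha,\C})$,
\[
\ad e\colon (u,0,0)\mapsto (0,2L_u,0),\quad (0,L_a+I,0)\mapsto (0,0,-a),\quad (0,0,x)\mapsto 0,
\]
\[
\ad f\colon (u,0,0)\mapsto 0,\quad (0,L_a+I,0)\mapsto (a,0,0),\quad (0,0,x)\mapsto (0,-2L_x,0).
\]
Squaring these (again using $L_e=\id$) and substituting into the truncated exponentials yields $\exp(\imath\ad e)$ and $\exp(\tfrac{\imath}{2}\ad f)$ in closed form; composing them in the correct order $c = \exp(\tfrac{\imath}{2}\ad f)\exp(\imath\ad e)$ then produces the three bulleted formulas. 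The cancellations $1+\imath^2 = 0$ and $1 - 1 + \tfrac14 = \tfrac14$ are exactly what make the first and third components collapse to $\tfrac{a}{4}$ (or $\tfrac{\imath a}{4}$) and the middle component to $\pm\imath L_a$ or $I$.

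Finally, for the isomorphism statement I would feed a general element $(a,I,-a)\in\mf k_\C$ through these three formulas and add: the $D^-_{\alpha,\C}$ and $D^+_{\alpha,\C}$ components cancel and the $\mathfrak{str}$ component sums to $I + 2\imath L_a$, so $c(\mf k_\C)\subseteq \mathfrak{str}(D_{\alpha,\C})$ with the stated formula. Since $\ad e$ and $\ad f$ are nilpotent even derivations, each exponential, hence $c$, is an automorphism of $\mf g_\C$; therefore $\mf k_\C = c^{-1}(\mathfrak{str}(D_{\alpha,\C}))$ is a subalgebra and $c|_{\mf k_\C}$ is a Lie superalgebra homomorphism onto its image. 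Bijectivity then follows from the direct-sum decomposition $\mathfrak{str}(D_{\alpha,\C}) = \langle L_x\rangle\oplus\Der(D_{\alpha,\C})$ together with the injectivity of $a\mapsto L_a$ (which holds because $L_a e = a$): the preimage of $L_b + J$ is $(-\tfrac{\imath}{2}b, J, \tfrac{\imath}{2}b)$, and $I + 2\imath L_a = 0$ forces $I=0$ and $a=0$. The only real hazard is keeping the super-signs and the two copies $D^\pm_{\alpha,\C}$ straight while composing the exponentials; the conceptual content is entirely contained in the nilpotency from the $3$-grading and the reduction $L_e = L_f = \id$.
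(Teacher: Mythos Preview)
Your proposal is correct and is essentially the approach the paper has in mind: the paper's proof is a one-line reference to ``the same straightforward calculations as given in the proof of \cite[Proposition 5.1]{BCD}'', and you have written out precisely those calculations. Your key observations --- that the $3$-grading forces $(\ad e)^3 = (\ad f)^3 = 0$ so the exponentials truncate, and that $e,f$ being the Jordan unit gives $L_e = \id$ and $[L_e,L_u]=0$, $I(e)=0$ --- are exactly what drives the computation, and your summary of the resulting cancellations matches the actual arithmetic.
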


\begin{proof}
This follows from the same straightforward calculations as given in the proof of \cite[Proposition 5.1]{BCD}.
\end{proof}

An explicit expression for the Fock representation $\rol := \pil\circ \,c$ on $\mc{P} (\mathds{C}^{2 |2}) / \mc{I}_\lambda$ 
is as follows. Let $z_1,z_2$ and $z_3,z_4$ be the even resp. odd representatives of the coordinates on $\mc{P} (\mathds{C}^{2 |2}) / \mc{I}_\lambda$. 
For $\mf g_-$ we obtain
\begin{align*}
\rol(f_1) &= -\dfrac{\imath}{2}(z_1 + \bessel(z_1)) -\frac{\imath}{2}(-\lambda +2z_1\pt {z_1} + z_3 \pt {z_3} + z_4\pt {z_4}),\\
\rol(f_2) &= -\dfrac{\imath}{2}(z_2 + \bessel(z_2)) -\frac{\imath}{2}(-\dfrac{\lambda}{\alpha} +2z_2\pt {z_2} + z_3 \pt {z_3} + z_4\pt {z_4}),\\
\rol(\zeta) &= -\dfrac{\imath}{2}(z_3 + \bessel(z_3)) -\frac{\imath}{2}(z_3\pt {z_1} + 2\alpha z_2\pt {z_4} +z_3\pt {z_2} + 2 z_1\pt {z_4}),\\
\rol(\theta) &= -\dfrac{\imath}{2}(z_4 + \bessel(z_4)) -\frac{\imath}{2}(z_4\pt {z_1} - 2\alpha z_2\pt {z_3} +z_4\pt {z_2} - 2 z_1\pt {z_3}).
\end{align*}
For $\mf g_+$ we have
\begin{align*}
\rol(e_1) &= -\dfrac{\imath}{2}(z_1 + \bessel(z_1)) +\frac{\imath}{2}(-\lambda +2z_1\pt {z_1} + z_3 \pt {z_3} + z_4\pt {z_4}),\\
\rol(e_2) &= -\dfrac{\imath}{2}(z_2 + \bessel(z_2)) +\frac{\imath}{2}(-\dfrac{\lambda}{\alpha} +2z_2\pt {z_2} + z_3 \pt {z_3} + z_4\pt {z_4}),\\
\rol(\xi) &= -\dfrac{\imath}{2}(z_3 + \bessel(z_3)) +\frac{\imath}{2}(z_3\pt {z_1} + 2\alpha z_2\pt {z_4} +z_3\pt {z_2} + 2 z_1\pt {z_4}),\\
\rol(\eta) &= -\dfrac{\imath}{2}(z_4 + \bessel(z_4)) +\frac{\imath}{2}(z_4\pt {z_1} - 2\alpha z_2\pt {z_3} + z_4\pt {z_2} - 2 z_1\pt {z_3}).
\end{align*}
For $\mf g_0$ we have
\begin{align*}
\rol(2L_{e_1}) &=z_1-\bessel(z_1), &\rol(2L_{e_2}) &= z_2-\bessel(z_2),\\
\rol(2L_{\xi}) &= z_3-\bessel(z_3), &\rol(2L_{\eta}) &= z_4-\bessel(z_4),\\
\rol(4[L_{e_1},L_{\xi}]) &=  -z_3\pt {z_1} - 2\alpha z_2\pt {z_4} +z_3\pt {z_2} + 2 z_1\pt {z_4},\\
\rol(4[L_{e_1},L_{\eta}]) &=  -{z_4}\pt {z_1} + 2\alpha z_2\pt {z_3} + z_4\pt {z_2} - 2 z_1\pt {z_3},
\end{align*}
\begin{align*}
\rol(4[L_{\xi},L_{\xi}]) &= 4(1+\alpha)z_3\pt {z_4} &\text{ or }\quad &\rol(d_-) = z_3\pt {z_4},\\
\rol(4[L_{\eta},L_{\eta}]) &= -4(1+\alpha)z_4\pt {z_3} &\text{ or }\quad &\rol(d_+) = z_4\pt {z_3},\\
\rol(4[L_{\xi},L_{\eta}]) &= -2(1+\alpha)(z_3\pt {z_3}  -z_4\pt {z_4}) &\text{ or }\quad &\rol(d_0) = z_4\pt {z_4} - {z_3}\pt {z_3}.
\end{align*}

Once again we only showed the $\alpha\neq 0$ case. The $\alpha=0$ case is still obtained by substituting every instance of $\lambda/\alpha$ by $\lambda$ and every other instance of $\lambda$ by $0$ in the above expressions.

\subsection{The Schr\"odinger representation}\label{ssSchrod}
Let $x_1,x_2$ and $x_3,x_4$ be the even resp. odd representatives of the coordinates on $\mc{P} (\mathds{R}^{2 |2}) / \mc{I}_\lambda$. Note that the operators occurring in $\pi_\lambda$ are not only well-defined on polynomials, but can be extended to smooth functions. For example $\pil (X) \exp(-2(x_1+x_2))$ is well-defined for all $X\in \mg$. Furthermore note that $\exp(-2(x_1+x_2))$ is invariant under the action of $\mf{k}$. This allows us to define the Schr\"odinger representation as follows:
\begin{align*}
W_\lambda &:= \mc{P}(\mathds{R}^{2|2}  )\exp(-2(x_1+x_2)) \mod \Ila,
\end{align*}
where the $\mf g$ module structure is given by $\pil$.

We will see in Section \ref{Segal-Bargmann transform} that we have an explicit $\mf{k}$-finite decomposition of $W_\lambda$.

\section{The Fock space and Bessel-Fischer product}
\label{Section Fock space}
In this section we will investigate the space $\mc{P} (\C^{2|2}) / \Ila$, for $\lambda \in \{1,\alpha\}$, on which the Fock representation is defined. In particular we will show that we have an non-degenerate, superhermitian, sesquilinear form on it. 
In \cite[Section 2.3]{HKMO} an inner product on the polynomial space $\mc P(\C^m)$ was introduced, namely the Bessel-Fischer inner product 
\begin{align*}
\bfip{p,q} := \left. p(\bessel)\bar q(z)\right|_{z=0}.
\end{align*}
Here $p(\bessel)$ is obtained by replacing $z_i$ by $\bessel (z_i)$ and $\bar q(z) = \overline{q(\bar z)}$ is obtained by conjugating the coefficients of the polynomial $q$. For polynomials, it was proven that the Bessel-Fischer inner product is equal to the $L^2$-inner product of the Fock space \cite[Proposition 2.6]{HKMO}. In \cite{BCD} this product was used as the starting point to generalize the Fock space to superspace. The Bessel-Fischer product is no longer an inner product in this case. However, under the right conditions it is a non-degenerate, sesquilinear, superhermitian form, which is consistent with the definition of Hilbert superspaces given in \cite{dGM}. We will use the same approach in this section to construct the Fock space associated with $D(2,1;\alpha)$.

A straightforward verification shows that the Bessel-Fischer product constructed in this section will be degenerate for $\alpha=0$. For $\alpha =1$  we know from \cite[Proposition 4.13]{BCD} that  the Bessel-Fischer product is degenerate. Therefore, from now on we will always assume $\lambda\in\{1,\alpha\}$ with $\alpha\not=0$ and $\alpha\not=1$.

\subsection{The Bessel-Fischer product}

\begin{Def}
For $p, q\in \mathcal P(\C^{2|2})$ we define the \textbf{Bessel-Fischer product} of $p$ and $q$ as
\begin{align*}
\bfip{p,q} := \left. p(\bessel)\bar q(z)\right|_{z=0},
\end{align*}
where $\bar q(z) = \overline{q(\bar z)}$ is obtained by conjugating the coefficients of the polynomial $q$ and $z :=(z_1, z_2, z_3, z_4)\in \C^{2|2}$.
\end{Def}

Explicitly for $p=\sum_\gamma a_\gamma z^\gamma$ and $q=\sum_\beta b_\beta z^\beta$ we have
\begin{align*}
\bfip{p,q} = \sum_{\gamma, \beta}\left.a_\gamma \bar b_\beta \bessel(z_1)^{\gamma_1}\bessel(z_2)^{\gamma_{2}}\bessel(z_3)^{\gamma_3}\bessel(z_4)^{\gamma_4}z_1^{\beta_1}z_2^{\beta_2}z_3^{\beta_3}z_4^{\beta_4}\right|_{z_1=z_2=z_3=z_4=0}.
\end{align*}

\begin{Def}\label{DefFock}
We define the \textbf{polynomial Fock space} as the superspace
\begin{align*}
\Fock := \mathcal{P}(\C^{2|2})/ \mc I_\lambda,
\end{align*}
with $\mc I_\lambda$ as in equation (\ref{Eq I lambda}).
\end{Def}

Let $z_1, z_2$ be the even coordinates and $z_3, z_4$ the odd coordinates of $\mathcal{P}(\C^{2|2})$. Then, for $\lambda=\alpha$, a homogeneous element $p$ of degree $k\geq 1$ in $\mathcal{F}_\alpha$ can be represented by 
\begin{align*}
p \equiv z_1^{k-1}\sum_{i=1}^4 p_i z_i, 
\end{align*}
with $p_i$ in $\C$.
Similarly, for $\lambda=1$, a homogeneous element $p$ of degree $k\geq 1$ in $\mathcal{F}_1$ can be represented by 
\begin{align*}
p \equiv z_2^{k-1}\sum_{i=1}^4 p_i z_i.
\end{align*}

We now prove that the Bessel-Fischer product is a non-degenerate, sesquilinear, superhermitian form when restricted to $\Fock$. The sesquilinearity follows directly from the linearity of the Bessel operators. Note that for all $p, q\in \mc P(\C^{2|2})$ we have
\begin{align*}
\bfip{z_i p,q} = (-1)^{|i||p|}\bfip{ p,\bessel(z_i)q}.
\end{align*}
This is a direct consequence of the definition of the Bessel-Fischer product.

\begin{Prop}[Orthogonality]\label{PropOrthog}
For $p_k\in \mathcal P_k(\C^{2|2})$ and $p_l\in \mathcal P_l(\C^{2|2})$ with $l\neq k$ we have
$\bfip{p_k, p_l} = 0.$
\end{Prop}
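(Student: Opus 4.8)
The plan is to deduce the vanishing entirely from a grading property of the Bessel operator: namely that, for each coordinate $z_i$, the differential operator $\bessel(z_i)$ is homogeneous of degree $-1$ on $\mathcal P(\C^{2|2})$, i.e.\ it maps $\mathcal P_m(\C^{2|2})$ into $\mathcal P_{m-1}(\C^{2|2})$ (with the convention $\mathcal P_{<0} = 0$). Granting this, the argument is immediate. First note that $p(\bessel)$ is well defined: since $p_k$ is a supercommutative polynomial and the operators $\bessel(z_i)$ supercommute by the Supercommutativity Proposition, the substitution $z_i\mapsto\bessel(z_i)$ is unambiguous. For homogeneous $p_k$ of degree $k$, the operator $p_k(\bessel)$ is then a composition of $k$ operators each of degree $-1$, hence homogeneous of degree $-k$. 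Applying it to $\bar p_l$, which is homogeneous of degree $l$ (conjugating coefficients preserves degree), yields $p_k(\bessel)\,\bar p_l \in \mathcal P_{l-k}(\C^{2|2})$. Evaluating at $z=0$ extracts the degree-$0$ part: if $l-k>0$ the polynomial vanishes at the origin, and if $l-k<0$ it is identically zero, so in either case $\bfip{p_k,p_l}=0$.

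The one point that needs genuine verification is the degree-$(-1)$ claim, and I would establish it by inspecting the explicit coefficients recorded in Lemma \ref{Lem Bessel}, splitting $\bessel$ into its first- and second-order parts. In the first-order part $\sum_i \lambda_{f_i}\pt{f_i}$, each $\lambda_{f_i}$ is one of $-\lambda e_1^\ast,\ -\tfrac{\lambda}{\alpha}e_2^\ast,\ 2\lambda\eta^\ast,\ -2\lambda\xi^\ast$, i.e.\ a scalar multiple of a dual basis covector; pairing with a fixed $x=z_i\in D_\alpha^{\plus}$ produces a \emph{constant} coefficient, and $\pt{f_i}$ lowers polynomial degree by one, giving net degree $-1$. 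In the second-order part $\sum_{i,j}\widetilde P_{f_i,f_j}\pt{f_j}\pt{f_i}$, Lemma \ref{Lem Bessel} shows that every $\widetilde P_{f_i,f_j}$ is a sum of terms of the shape (coordinate variable)$\times$(dual covector), so after pairing with $x=z_i$ the coefficient is \emph{linear} in the variables (degree $+1$), while the two derivatives lower degree by two; again the net effect is $-1$. Thus both pieces of $\bessel(z_i)$ lower degree by exactly one, which is the claim.

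The main obstacle is therefore purely bookkeeping: confirming from Lemma \ref{Lem Bessel} that the first-order coefficients are scalars and the second-order coefficients are genuinely homogeneous of degree one in the coordinates, with no stray degree-zero or higher-degree terms (and checking the $\alpha=0$ substitution rule does not spoil this, which it does not since it only rescales or kills scalar coefficients). Once that inspection is done, no further computation is required: the entire statement follows from the fact that $\bfip{\cdot,\cdot}$ composes a degree-$(-k)$ operator with a degree-$l$ polynomial and then reads off the constant term. I would also remark that this orthogonality is exactly the super-analogue of the classical statement that the Fischer product pairs distinct homogeneous components trivially, and that it is what allows one to reduce later nondegeneracy and Hermiticity questions to each fixed degree separately.
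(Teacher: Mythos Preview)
Your proof is correct and follows exactly the same idea as the paper's own proof, which is the single sentence ``This follows from the fact that Bessel operators lower the degree of polynomials by one.'' You have simply unpacked that assertion by checking from Lemma \ref{Lem Bessel} that the first-order part of $\bessel(z_i)$ has constant coefficients and the second-order part has linear coefficients, which is more detail than the paper bothers to give but is the same argument.
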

\begin{proof}
This follows from the fact that Bessel operators lower the degree of polynomials by one.
\qedhere
\end{proof}

\begin{lemma}\label{LemBess}
For $\lambda\in\{1,\alpha\}$ we have
\begin{align*}
\bessel(z_1)z_1 &= -\lambda, & \bessel(z_2)z_2 &= -\dfrac{\lambda}{\alpha}, & \bessel(z_3)z_4 &= -2\lambda, & \bessel(z_4)z_3 &= 2\lambda.
\end{align*}
and for all other cases we have $\bessel(z_i)z_j=0$.
\end{lemma}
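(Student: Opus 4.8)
The plan is to read off each value $\bessel(z_i)z_j$ directly from the explicit form of the Bessel operator displayed after Lemma \ref{Lem Bessel}. Recall that $\bessel(z_i)$ denotes the differential operator obtained by applying the Bessel operator to the basis element of $D_\alpha^+$ identified with $z_i$, under $z_1\leftrightarrow e_1$, $z_2\leftrightarrow e_2$, $z_3\leftrightarrow\xi$, $z_4\leftrightarrow\eta$, whereas the $z_j$ on which it acts are the coordinate functions on $S(D_\alpha^{\minus})$ attached to $f_1,f_2,\zeta,\theta$. The whole statement is therefore a computation of $\bessel(x)$ applied to a single coordinate, for $x$ ranging over a basis of $D_\alpha^+$.

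First I would observe that the computation collapses almost entirely. Every summand of the second-order part $\sum_{i,j}\widetilde P_{f_i,f_j}\pt{f_j}\pt{f_i}$ of $\bessel$ carries two partial derivatives, so it annihilates every polynomial of degree at most one; in particular it kills each linear coordinate $z_j$. Hence, acting on $z_j$, only the first-order part $\sum_i\lambda_{f_i}\pt{f_i}$ of $\bessel$ survives. This also re-proves, in this special case, that $\bessel$ lowers the degree by one, so each $\bessel(z_i)z_j$ is a constant, consistent with Proposition \ref{PropOrthog}.

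It then remains to evaluate the first-order part. Using $\lambda_{f_1}=-\lambda e_1^\ast$, $\lambda_{f_2}=-\tfrac{\lambda}{\alpha}e_2^\ast$, $\lambda_\zeta=2\lambda\eta^\ast$ and $\lambda_\theta=-2\lambda\xi^\ast$ from Lemma \ref{Lem Bessel}, the first-order parts of the four operators are $\bessel(e_1)=-\lambda\pt{f_1}+\cdots$, $\bessel(e_2)=-\tfrac{\lambda}{\alpha}\pt{f_2}+\cdots$, $\bessel(\xi)=-2\lambda\pt\theta+\cdots$ and $\bessel(\eta)=2\lambda\pt\zeta+\cdots$, where the omitted terms are second order and drop out on linear polynomials. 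Applying each to the coordinates and using $\pt{f_1}f_1=\pt{f_2}f_2=\pt\zeta\zeta=\pt\theta\theta=1$ yields exactly the four nonzero values $\bessel(z_1)z_1=-\lambda$, $\bessel(z_2)z_2=-\tfrac{\lambda}{\alpha}$, $\bessel(z_3)z_4=-2\lambda$, $\bessel(z_4)z_3=2\lambda$, while every remaining pairing vanishes because the single surviving derivative does not match the coordinate.

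I do not anticipate a genuine obstacle; the only point requiring care is the sign bookkeeping for the odd variables. One must respect the chosen (left) convention for $\pt\zeta$ and $\pt\theta$ so that the opposite signs in $\bessel(z_3)z_4=-2\lambda$ and $\bessel(z_4)z_3=2\lambda$ emerge correctly, and verify that $\lambda_\zeta,\lambda_\theta$ are paired with $\eta^\ast,\xi^\ast$ respectively (rather than with $\zeta^\ast,\theta^\ast$), which is precisely what produces the cross pattern in the odd coordinates. Beyond this, the entire lemma is a direct reading of the first-order part of the Bessel operator.
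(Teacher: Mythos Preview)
Your proof is correct and follows essentially the same approach as the paper: the paper writes out the four operators $\bessel(z_i)$ explicitly in the $z$-variables and then verifies directly, while you organize the same calculation by first noting that the second-order terms $\widetilde P_{f_i,f_j}\pt{f_j}\pt{f_i}$ kill linear polynomials and then reading off the first-order part from Lemma~\ref{Lem Bessel}. The content of the two verifications is identical.
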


\begin{proof}
We have
\begin{align*}
\bessel(z_1) &= (-\lambda+z_1\pt {z_1} + z_3\pt {z_3} + z_4\pt {z_4})\pt {z_1} -2\alpha {z_2} \pt {z_3} \pt {z_4}, \\
\bessel(z_2) &= (-\dfrac{\lambda}{\alpha}+{z_2}\pt {z_2} + z_3\pt {z_3} + z_4\pt {z_4})\pt {z_2} -2 z_1 \pt {z_3} \pt {z_4},\\
\bessel(z_3) &= (-2\lambda + 2z_1\pt {z_1} + 2\alpha {z_2}\pt {z_2} + 2(1+\alpha)z_3\pt {z_3})\pt {z_4} + {z_3} \pt {z_1} \pt {z_2},\\
\bessel(z_4) &= (2\lambda - 2z_1\pt {z_1} - 2\alpha {z_2}\pt {z_2} - 2(1+\alpha)z_4\pt {z_4})\pt {z_3} + z_4 \pt {z_1} \pt {z_2}.
\end{align*}
The lemma now follows from a straightforward verification.
\end{proof}

\begin{Prop}\label{Restriction}
For all $p\in \mc P(\C^{2|2})$ and $C \in \Ila$, with $\lambda\in\{1,\alpha\}$ it holds that
\begin{align*}
\bfip{C,p} =0 = \bfip{p,C}.
\end{align*}
Thus the Bessel-Fischer product is well defined on $\Fock$.  
\end{Prop}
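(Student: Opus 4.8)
The plan is to reduce both vanishing statements, by sesquilinearity, to the generators of $\Ila$, and then to strip coordinate factors off one argument of $\bfip{\cdot,\cdot}$ using the adjoint relation $\bfip{z_ip,q}=(-1)^{|i||p|}\bfip{p,\bessel(z_i)q}$ until a Bessel operator hits an element of $V_\lambda$ and kills it, since the Bessel operators act trivially on $V_\lambda$. Concretely: as $\bfip{\cdot,\cdot}$ is sesquilinear and $\Ila=\mathcal P(\C^{2|2})\,V_\lambda$, every $C\in\Ila$ is a combination of elements $z^\gamma v$ with $z^\gamma$ a coordinate monomial and $v$ a basis vector of $V_\lambda$, so it suffices to treat $C=z^\gamma v$. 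By Proposition \ref{PropOrthog} the pairings $\bfip{z^\gamma v,p}$ and $\bfip{p,z^\gamma v}$ vanish unless $\deg p=|\gamma|+2$, which I assume henceforth.

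For $\bfip{C,p}=0$ I would argue that the left radical $\{C:\bfip{C,p}=0\text{ for all }p\}$ is an ideal: it is stable under multiplication by each coordinate, since $\bfip{z_iC,p}=(-1)^{|i||C|}\bfip{C,\bessel(z_i)p}$. Hence it contains $\Ila=\mathcal P(\C^{2|2})V_\lambda$ as soon as it contains $V_\lambda$, and the whole direction reduces to the base case $\bfip{v,p}=0$ for $v\in V_\lambda$. By Proposition \ref{PropOrthog} only $p\in\mathcal P_2$ contributes, so this is the finite check that the second-order operator $v(\bessel)$ maps $\mathcal P_2$ to the zero constant; it follows from the explicit Bessel operators in Lemma \ref{LemBess} together with the triviality of the Bessel action on $V_\lambda$.

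For $\bfip{p,C}=0$ the situation is lopsided. The base case $\bfip{p,v}=0$ for $v\in V_\lambda$ is immediate and needs no computation: peeling every factor of $p$ off the first slot writes $\bfip{p,v}$ as a sum of terms each carrying a factor $\bessel(z_i)v=0$. The obstacle is the general case $\gamma\neq0$, because the adjoint relation removes coordinate factors only from the \emph{first} argument and therefore cannot reach the $z^\gamma$ buried in the second argument of $\bfip{p,z^\gamma v}$; equivalently, the right radical is not visibly stable under multiplication in the second slot. I would overcome this by establishing the companion (super)hermitian symmetry $\bfip{p,q}=(-1)^{|p||q|}\overline{\bfip{q,p}}$ of the Bessel--Fischer pairing on all of $\mathcal P(\C^{2|2})$ --- equivalently, a dual adjoint relation moving factors off the second slot. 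Granting it, $\bfip{p,z^\gamma v}=\pm\,\overline{\bfip{z^\gamma v,p}}=0$ by the first direction, and moreover the base case of the first direction collapses to $\bfip{v,w}=\pm\,\overline{\bfip{w,v}}=0$, so this symmetry becomes the single substantive ingredient.

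The main obstacle is thus exactly this symmetry, the analogue of the Hermitianity in \cite[Proposition 2.6]{HKMO} and in \cite{BCD}. It is the content that the adjoint relation alone cannot supply --- that relation is essentially supercommutativity of the Bessel operators rephrased --- and it is what forces one either to evaluate $v(\bessel)$ on $\mathcal P_2$ by hand or to identify $\bfip{\cdot,\cdot}$ with a manifestly Hermitian (e.g.\ $L^2$) object, as in the references.
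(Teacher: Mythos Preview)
Your reduction for $\bfip{C,p}=0$ is the paper's: the left radical is an ideal under coordinate multiplication, so it suffices to check $\bfip{v,p}=0$ for $v\in V_\lambda$ and $p\in\mathcal P_2$, which the paper does as an explicit verification of the finitely many cases.

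For $\bfip{p,C}=0$ you have misidentified the obstacle and missed the short argument. You correctly peel factors off $p$ via $\bfip{z_ip,q}=\pm\bfip{p,\bessel(z_i)q}$, and you correctly note that $\bessel(z_i)(z^\gamma v)$ is no longer of the form $(\text{monomial})\cdot v$. But that does not matter: $\Ila$ is a $\mf g$-submodule, and the Bessel operators are (up to a scalar) exactly the action of $\mf g_+$, so each $\bessel(z_i)$ maps $\Ila$ into $\Ila$. Hence after peeling all factors of $p$ you land on $\bfip{1,C'}$ with $C'\in\Ila$ of degree $0$, which forces $C'=0$. That is the paper's proof in one line: $p(\bessel)C\in\Ila$, and $\Ila$ has no constant term.

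Your proposed substitute, superhermitianity of $\bfip{\cdot,\cdot}$ on the full polynomial space $\mathcal P(\C^{2|2})$, is therefore unnecessary. It is also not available in the paper's logical order: superhermitianity is only established on the quotient $\Fock$, as a corollary of the explicit computation in Proposition~\ref{PropInnProd}, which in turn presupposes the present proposition. Proving it independently on all of $\mathcal P(\C^{2|2})$ would require a separate argument you have not given, and your fallback of ``evaluating $v(\bessel)$ on $\mathcal P_2$ by hand'' handles only the $\bfip{C,p}$ direction, not the general $\bfip{p,z^\gamma v}$ you were trying to treat.
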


\begin{proof}
Since $\Ila$ is a $\mg$-submodule, the Bessel operators map $\Ila$ to $\Ila$. Hence for an arbitrary $p\in \mathcal P(\C^{2|2})$ and $C \in \Ila$ there exists a $C' \in \Ila$ such that
\begin{align*}
\bfip{p, C} = p(\bessel)C(z) |_{z=0} = C'(z) |_{z=0}.
\end{align*}
Since an element in $\Ila$ has no constant term, this implies $\bfip{p,C} =0$. To prove $\bfip{C,p} =0$ we can restrict to $p \in\{z_i z_1, z_iz_2,z_i z_3, z_i z_4\}$  and $C \in \{2\frac{\alpha}{\lambda}z_1z_2+z_3z_4, z_j^2, z_j z_3, z_j z_4\}$ with $(z_i,z_j)=(z_1,z_2)$ for $\lambda=\alpha$ and $(z_i,z_j)=(z_2,z_1)$ for $\lambda=1$. The proposition now follows from verifying $\bessel(C)p = 0$ for all 32 cases. This is a straightforward calculation. For example
\begin{align*}
\bessel(2\frac{\alpha}{\lambda}z_1z_2+z_3z_4) z_1 z_2 &= 2\frac{\alpha}{\lambda} \bessel(z_1)\bessel(z_2)z_1 z_2 + \bessel(z_3)\bessel(z_4)z_1 z_2 \\
&= -2\bessel(z_1)z_1 +  \bessel(z_3)z_4\\
&= 2\lambda-2\lambda= 0,
\end{align*}
proving the case $C=2\frac{\alpha}{\lambda}z_1z_2+z_3z_4$ and $p=z_1z_2$.
\end{proof}

Since $\Fock$ is a relatively small superspace, we can calculate the Bessel-Fischer product explicitly.

\begin{Prop}\label{PropInnProd}
Suppose $\lambda=\alpha$ and $p,q\in \{z_1^{k}, z_1^kz_2,z_1^kz_3, z_1^kz_4\}$, with $k\in\N$. Then the only non-zero evaluations of $\bfip{p,q}$ are
\begin{align*}
\bfip{z_1^{k}, z_1^{k}} &= -\bfip{z_1^{k}z_2, z_1^{k}z_2} = k!(-\alpha)_{k},\\
\bfip{z_1^{k}z_3, z_1^{k}z_4} &= -\bfip{z_1^{k}z_4, z_1^{k}z_3}  = 2k!(-\alpha)_{k+1},
\end{align*}
where we used the Pochhammer symbol $(a)_k = a(a+1)(a+2)\cdots (a+k-1)$.\\
Similarly, suppose $\lambda=1$ and $p,q\in \{z_2^{k}, z_2^kz_1,z_2^kz_3, z_2^kz_4\}$, with $k\in\N$. Then the only non-zero evaluations of $\bfip{p,q}$ are
\begin{align*}
\bfip{z_2^{k}, z_2^{k}} &= -\bfip{z_2^{k}z_1, z_2^{k}z_1} = k!(-\alpha^{-1})_{k},\\
\bfip{z_2^{k}z_3, z_2^{k}z_4} &= -\bfip{z_2^{k}z_4, z_2^{k}z_3}  = -2 \alpha k!(-\alpha^{-1})_{k+1}.
\end{align*}
\end{Prop}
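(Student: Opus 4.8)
The plan is to reduce every entry to a one-step recursion in $k$ by peeling off a single factor of $z_1$ (for $\lambda=\alpha$) or $z_2$ (for $\lambda=1$), and then to read off the base case $k=0$ directly from Lemma \ref{LemBess}. The three ingredients are: the orthogonality of components of different degree (Proposition \ref{PropOrthog}); the adjoint identity $\bfip{z_i p,q}=(-1)^{|i||p|}\bfip{p,\bessel(z_i)q}$ recorded just before Proposition \ref{PropOrthog}; and the explicit Bessel operators from Lemma \ref{LemBess}.

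I treat $\lambda=\alpha$; the case $\lambda=1$ is the identical argument with $z_1$ and $z_2$ interchanged and $\bessel(z_2)$ in place of $\bessel(z_1)$, the base cases again coming from Lemma \ref{LemBess}. First observe that among the four monomials $z_1^{k}$, $z_1^{k}z_2$, $z_1^{k}z_3$, $z_1^{k}z_4$, the first has degree $k$ and the other three degree $k+1$; hence Proposition \ref{PropOrthog} forces every pairing of $z_1^{k}$ with one of the other three to vanish. Among the three monomials of degree $k+1$, a parity count disposes of the mixed pairs: since $\bessel(z_3)$ and $\bessel(z_4)$ are odd operators, $p(\bessel)$ has the same parity as $p$, so $p(\bessel)\bar q$ can have a nonzero constant term only when $|p|=|q|$; this kills the pairings of the even monomial $z_1^{k}z_2$ with the odd monomials $z_1^{k}z_3,z_1^{k}z_4$.

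For each surviving pair I peel one $z_1$ off the left argument, for instance
\begin{align*}
\bfip{z_1^{k}, z_1^{k}} = \bfip{z_1^{k-1}, \bessel(z_1)\,z_1^{k}},
\end{align*}
and evaluate the Bessel operator on the monomial that now sits in the right argument. Applying the formula for $\bessel(z_1)$ from Lemma \ref{LemBess} gives
\begin{align*}
\bessel(z_1)\,z_1^{k} &= k(k-1-\lambda)\,z_1^{k-1}, & \bessel(z_1)(z_1^{k}z_2) &= k(k-1-\lambda)\,z_1^{k-1}z_2,\\
\bessel(z_1)(z_1^{k}z_3) &= k(k-\lambda)\,z_1^{k-1}z_3, & \bessel(z_1)(z_1^{k}z_4) &= k(k-\lambda)\,z_1^{k-1}z_4.
\end{align*}
Each identity turns the evaluation into a scalar times the corresponding product one degree lower, producing $\bfip{z_1^{k},z_1^{k}}=k(k-1-\lambda)\bfip{z_1^{k-1},z_1^{k-1}}$ and $\bfip{z_1^{k}z_2,z_1^{k}z_2}=k(k-1-\lambda)\bfip{z_1^{k-1}z_2,z_1^{k-1}z_2}$ in the even case, and $\bfip{z_1^{k}z_3,z_1^{k}z_4}=k(k-\lambda)\bfip{z_1^{k-1}z_3,z_1^{k-1}z_4}$ (together with the version obtained by swapping $z_3$ and $z_4$) in the odd case. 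Iterating to $k=0$ and inserting the base values $\bfip{1,1}=1$, $\bfip{z_2,z_2}=-\lambda/\alpha$, $\bfip{z_3,z_4}=-2\lambda$, $\bfip{z_4,z_3}=2\lambda$ from Lemma \ref{LemBess}, then rewriting the products as Pochhammer symbols via $\prod_{j=1}^{k}(j-1-\alpha)=(-\alpha)_{k}$ and $(-\alpha)\prod_{j=1}^{k}(j-\alpha)=(-\alpha)_{k+1}$, yields the stated closed forms. The two odd diagonal entries vanish because the same recursion starts from $\bfip{z_3,z_3}=\bfip{z_4,z_4}=0$.

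The computation is essentially bookkeeping, and the one place that needs genuine care is the sign arithmetic: tracking the factor $(-1)^{|i||p|}$ in the adjoint identity and, more importantly, the Koszul signs produced when the odd derivations $\pt{z_3},\pt{z_4}$ are commuted past the odd coordinates while evaluating $\bessel(z_1)$ on $z_1^{k}z_3$ and $z_1^{k}z_4$. It is precisely these signs that separate the even recursion coefficient $k(k-1-\lambda)$ from the odd one $k(k-\lambda)$, and hence account for the shift from $(-\alpha)_{k}$ to $(-\alpha)_{k+1}$ between the even and odd formulas; everything else is a routine induction.
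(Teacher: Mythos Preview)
Your proof is correct and follows essentially the same strategy as the paper: use the adjoint identity $\bfip{z_i p,q}=(-1)^{|i||p|}\bfip{p,\bessel(z_i)q}$ together with the explicit Bessel operators to peel off one factor at a time and iterate. The only cosmetic difference is the order of peeling: the paper strips the extra factor $z_2,z_3,z_4$ first via $\bessel(z_2),\bessel(z_3),\bessel(z_4)$, reducing the three remaining cases to the single already-computed value $\bfip{z_1^{k},z_1^{k}}$, whereas you peel $z_1$ uniformly in every case and read the base values at $k=0$ from Lemma~\ref{LemBess}. Both routes are the same idea and lead to the same closed forms.
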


\begin{proof}
We prove the $\lambda=\alpha$ case. The $\lambda =1$ is entirely analogous. For the first non-zero case we find
\begin{align*}
\bfip{z_1^{k}, z_1^{k}} = \bfip{z_1^{k-1}, \bessel(z_1)z_1^{k}} = k(k-1-\alpha)\bfip{z_1^{k-1}, z_1^{k-1}}.
\end{align*}
Iterating this process we obtain
\begin{align*}
\bfip{z_1^{k}, z_1^{k}} = (-1)^{k}k!(\alpha-k+1)_{k}\bfip{1,1} = k!(-\alpha)_{k}.
\end{align*}
The other non-zero cases now follow from
\begin{align*}
\bfip{z_1^{k}z_2, z_1^{k}z_2} &= \bfip{z_1^k, \bessel(z_2)z_1^{k}z_2} = -\bfip{z_1^k, z_1^k},\\
\bfip{z_1^{k}z_3, z_1^{k}z_4} &= \bfip{z_1^k, \bessel(z_3)z_1^{k}z_4} = 2(k-\alpha)\bfip{z_1^k, z_1^k},\\
\bfip{z_1^{k}z_4, z_1^{k}z_3} &= \bfip{z_1^k, \bessel(z_4)z_1^{k}z_3} = -2(k-\alpha)\bfip{z_1^k, z_1^k}.
\end{align*}
The general case follows from similar calculations and iterations, taking into account Lemma \ref{LemBess}.
\end{proof}

This property also shows us that the Bessel-Fischer product is superhermitian and tells us when it is non-degenerate.

\begin{Cor}[Superhermitianity]
For $\lambda\in\{1,\alpha\}$ the Bessel-Fischer product is superhermitian on $\Fock$, i.e.,
\begin{align*}
\bfip{p, q} = (-1)^{\left|p\right|\left|q\right|}\bfipbar{q, p},
\end{align*}
for $p, q\in \Fock$.
\end{Cor}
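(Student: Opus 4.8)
The plan is to reduce the identity to the finite list of pairings already computed in Proposition \ref{PropInnProd} and then observe that all the non-zero values are real, so that the superhermitian relation collapses to a simple sign bookkeeping. First I would record that both sides are sesquilinear: the product $\bfip{\cdot,\cdot}$ is linear in its first slot and conjugate-linear in its second (the conjugation coming from $\bar q$), and the expression $(-1)^{|p||q|}\bfipbar{q,p}$ inherits the same sesquilinearity provided $p$ and $q$ are taken parity-homogeneous so that the sign $(-1)^{|p||q|}$ is well defined. Hence it suffices to verify the identity when $p$ and $q$ range over a basis of $\Fock$ consisting of representatives that are homogeneous both for the polynomial degree and for the parity; for $\lambda=\alpha$ such a basis is supplied by the monomials $z_1^{k}$, $z_1^{k}z_2$, $z_1^{k}z_3$, $z_1^{k}z_4$ occurring in Proposition \ref{PropInnProd}, and analogously with $z_2$ in the leading power when $\lambda=1$.

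Next I would invoke Proposition \ref{PropOrthog}: since the Bessel operators lower polynomial degree by one, $\bfip{p,q}=0$ whenever $\deg p\neq\deg q$. This removes all cross terms and leaves only pairings between basis elements of equal degree, which are exactly the ones tabulated in Proposition \ref{PropInnProd}. Because we work with the real form corresponding to $\g_{\oa}=\mf{sl}(2,\R)\oplus\mf{sl}(2,\R)\oplus\mf{sl}(2,\R)$, the parameter $\alpha$, and hence $\lambda\in\{1,\alpha\}$, is real, so every value $k!(-\alpha)_{k}$, $2k!(-\alpha)_{k+1}$, and their $\lambda=1$ analogues are real numbers equal to their own conjugates.

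With these observations the verification splits into three types and is immediate. For the even diagonal pairings such as $\bfip{z_1^{k},z_1^{k}}$ and $\bfip{z_1^{k}z_2,z_1^{k}z_2}$ the parities are even, so $(-1)^{|p||q|}=1$ and the identity reduces to $x=\bar x$, which holds since $x$ is real. For the odd off-diagonal pairs $\{z_1^{k}z_3,z_1^{k}z_4\}$ one has $|p|=|q|=1$, the relation to check is $\bfip{p,q}=-\bfipbar{q,p}$, and Proposition \ref{PropInnProd} gives $\bfip{z_1^{k}z_3,z_1^{k}z_4}=-\bfip{z_1^{k}z_4,z_1^{k}z_3}$ with both values real, whence $-\overline{\bfip{z_1^{k}z_4,z_1^{k}z_3}}=\bfip{z_1^{k}z_3,z_1^{k}z_4}$, as required. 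All remaining pairs vanish on both sides, either by orthogonality or because they are absent from the list in Proposition \ref{PropInnProd}. The $\lambda=1$ case is identical after replacing the leading power $z_1$ by $z_2$.

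There is no serious obstacle here; the mathematical content is entirely carried by Proposition \ref{PropInnProd}. The only point demanding a little care is the sesquilinear reduction: one must restrict to parity-homogeneous $p,q$ before the sign $(-1)^{|p||q|}$ even makes sense, and one must check that the passage from basis monomials to general elements is compatible with the conjugate-linearity in the second slot, i.e.\ that $(-1)^{|e_i||e_j|}=(-1)^{|p||q|}$ for every basis monomial $e_i,e_j$ appearing in the parity-homogeneous components of $p$ and $q$. This is automatic once the spanning set is chosen parity-homogeneous, so the corollary follows directly.
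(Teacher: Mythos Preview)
Your proposal is correct and follows the same route as the paper, which presents the corollary as an immediate consequence of Proposition~\ref{PropInnProd} without further argument. You have simply spelled out the details: reduce by sesquilinearity and Proposition~\ref{PropOrthog} to the finitely many basis pairings, observe that the values computed in Proposition~\ref{PropInnProd} are real (using that $\alpha\in\R$ by the choice of real form), and check the sign; the paper leaves all of this implicit.
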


\begin{Cor}[Non-degenaracy]\label{nondeg}
For $\lambda=\alpha$ with $\alpha\not\in \N$ and for $\lambda=1$ with $\alpha^{-1}\not\in \N$ the Bessel-Fischer product is non-degenerate on $\Fock$, i.e., if
$
\bfip{p,q} =0,
$
for all $q\in \Fock$, then $p=0$.
\end{Cor}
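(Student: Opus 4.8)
The plan is to exploit that the Bessel-Fischer product is block-diagonal with respect to the grading by polynomial degree, reducing non-degeneracy on all of $\Fock$ to non-degeneracy on each homogeneous component $\mathcal F_{\lambda,k}$ (the image of $\mathcal P_k(\C^{2|2})$ in $\Fock$). Concretely, I would write an arbitrary $p\in\Fock$ as a finite sum $p=\sum_k p_k$ of homogeneous pieces; Proposition \ref{PropOrthog} gives $\bfip{p,q_k}=\bfip{p_k,q_k}$ for every homogeneous $q_k$ of degree $k$, so $p$ lies in the radical of the form exactly when each $p_k$ lies in the radical of the restriction of $\bfip{\cdot\,,\cdot}$ to $\mathcal F_{\lambda,k}$. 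Hence it suffices to prove each such restriction is non-degenerate.

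For the degree-$0$ component (the constants) one has $\bfip{1,1}=1\neq 0$, so nothing is needed. Fixing $k\geq 1$ and treating $\lambda=\alpha$, I would use the normal form for homogeneous elements of $\mathcal F_\alpha$ stated after Definition \ref{DefFock}: the space $\mathcal F_{\alpha,k}$ has basis $\{z_1^k,\,z_1^{k-1}z_2,\,z_1^{k-1}z_3,\,z_1^{k-1}z_4\}$, the first two even and the last two odd. The next step is to record that the Gram matrix in this basis is block-diagonal, the even–odd entries vanishing for parity reasons and the surviving entries being read off from Proposition \ref{PropInnProd} (using exponent $k$ for the pure term $z_1^k$ and exponent $k-1$ for the three mixed terms). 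This produces an even block $\mathrm{diag}\big(k!(-\alpha)_k,\,-(k-1)!(-\alpha)_{k-1}\big)$ and an antisymmetric odd block with off-diagonal entries $\pm 2(k-1)!(-\alpha)_k$, whose determinants are $-k!(k-1)!(-\alpha)_k(-\alpha)_{k-1}$ and $4\big((k-1)!\big)^2(-\alpha)_k^2$ respectively.

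Both determinants are nonzero precisely when $(-\alpha)_k\neq 0$; since $(-\alpha)_k=(-\alpha)_{k-1}(k-1-\alpha)$, the condition $(-\alpha)_k\neq 0$ already forces $(-\alpha)_{k-1}\neq 0$. As $(-\alpha)_k=\prod_{j=0}^{k-1}(j-\alpha)$ vanishes for some $k$ iff $\alpha\in\{0,1,2,\dots\}$, the hypothesis $\alpha\notin\N$ guarantees $(-\alpha)_k\neq 0$ for every $k$, giving non-degeneracy on each $\mathcal F_{\alpha,k}$ and hence on $\Fock$. The case $\lambda=1$ would run identically, exchanging the roles of $z_1$ and $z_2$ and replacing $(-\alpha)_k$ by $(-\alpha^{-1})_k$ according to the second half of Proposition \ref{PropInnProd}, so that the binding condition becomes $\alpha^{-1}\notin\N$. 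The only point requiring genuine care is the bookkeeping behind the block structure: one must verify that the cross pairings such as $\bfip{z_1^k,z_1^{k-1}z_2}$ and the ``diagonal'' odd pairings $\bfip{z_1^{k-1}z_3,z_1^{k-1}z_3}$ indeed vanish, which follows from the same computations underlying Proposition \ref{PropInnProd} (equivalently, from the fact that $\bfip{\cdot\,,\cdot}$ pairs weight vectors of unequal weight to zero), together with the one-step shift in the $z_1$-exponent between the pure and the mixed basis vectors.
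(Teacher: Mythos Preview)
Your proposal is correct and follows essentially the same approach as the paper: the paper states this result as a direct corollary of Proposition~\ref{PropInnProd} without further argument, and you have simply spelled out the details---reducing to homogeneous components via Proposition~\ref{PropOrthog}, reading off the Gram matrix from Proposition~\ref{PropInnProd}, and checking the Pochhammer factors are nonzero under the stated hypotheses. Your observation that the cross pairings (e.g.\ $\bfip{z_1^k,z_1^{k-1}z_2}$) require a small separate check is accurate, since Proposition~\ref{PropInnProd} as literally stated fixes a single exponent $k$ for both arguments; but as you note, these vanish by the same one-line Bessel computations (or parity) used in its proof.
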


\subsection{Reproducing kernel}\label{SSRepKer}

In the classical case a reproducing kernel for the Fock space was constructed in Section 2.4 of \cite{HKMO}. A generalisation of this reproducing kernel in superspace was constructed in \cite{BCD} for $\mf{osp}(m,2|2n)$. Similarly, we can construct a ``reproducing kernel'' for $D(2,1;\alpha)$.

Suppose $w := (w_1,w_2,w_3,w_4) \in \K^{2|2}$. We define
\begin{align*}
z|w := z_1 w_1+\alpha z_2 w_2 - \dfrac{1}{2}z_3 w_4 +\dfrac{1}{2}z_4 w_3.
\end{align*}

\begin{lemma}\label{Lemma rep kernel}
Define the superfunction $\ds I_{\lambda,k}(z,w)$, with $\lambda\in\{1,\alpha\}$, by
\begin{align*}
\ds I_{\alpha,k}(z,w) &:= \dfrac{(-1)^k}{k!}\left(\alpha-k+1\right)_k^{-1}(z|\overline{w})^k,\\
\ds I_{1,k}(z,w) &:= \dfrac{(-1)^k}{\alpha^{k}k!}\left(\alpha^{-1}-k+1\right)_k^{-1}(z|\overline{w})^k,
\end{align*}
where we used the Pochhammer symbol $(a)_k = a(a+1)(a+2)\cdots (a+k-1)$. For all $p\in \mc P_k(\C^{2|2})$ we have
\begin{align*}
\bfip{p(z),\ds I_{\lambda,k}(z,w)} = p(w) \mod  \mc I_\lambda.
\end{align*}
\end{lemma}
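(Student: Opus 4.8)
The plan is to verify the identity on a spanning set and reduce everything to the explicit pairings already computed. Since the Bessel--Fischer product is linear in its first argument and, by Proposition \ref{Restriction}, annihilates $\Ila$ in the first slot, while the right-hand side is taken modulo $\Ila$ by definition, both sides descend to linear functionals on the degree-$k$ part of $\Fock$. Hence it suffices to check the equality for $p$ ranging over a set of representatives spanning $\mathcal P_k(\C^{2|2})$ modulo $\Ila$. I would treat only $\lambda=\alpha$, the case $\lambda=1$ following by the analogous computation (formally the substitution $\alpha\mapsto\alpha^{-1}$ together with the coordinate swap $z_1\leftrightarrow z_2$ coming from $D_\alpha\cong D_{\alpha^{-1}}$). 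For $\lambda=\alpha$ I would use the representatives $p\in\{z_1^k,\,z_1^{k-1}z_2,\,z_1^{k-1}z_3,\,z_1^{k-1}z_4\}$ coming from the description of $\Fock$.

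Next I would unwind the left-hand side. Because $\alpha$ is real and the prefactor in $\ds I_{\alpha,k}$ is real, the conjugation $\overline{q}(z)=\overline{q(\bar z)}$ used in the Bessel--Fischer product turns $\overline w$ back into $w$, so that
\[ \bfip{p(z),\ds I_{\alpha,k}(z,w)} = c_{\alpha,k}\,p(\bessel)\,(z|w)^k\big|_{z=0},\qquad c_{\alpha,k}:=\frac{(-1)^k}{k!}(\alpha-k+1)_k^{-1}. \]
The key arithmetic observation is that $(\alpha-k+1)_k=(-1)^k(-\alpha)_k$, so $c_{\alpha,k}=\big(k!(-\alpha)_k\big)^{-1}=\bfip{z_1^k,z_1^k}^{-1}$: the normalization is precisely the inverse of the top self-pairing from Proposition \ref{PropInnProd}. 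I would then expand $(z|w)^k$ by the multinomial theorem and pair $p$ against each resulting monomial, evaluating the pairings through Lemma \ref{LemBess} and the iteration scheme in the proof of Proposition \ref{PropInnProd}.

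For $p=z_1^k$ and for the two odd representatives $z_1^{k-1}z_3$ and $z_1^{k-1}z_4$, a single monomial survives the pairing (namely $z_1^k$, respectively $z_1^{k-1}z_4$ and $z_1^{k-1}z_3$), and the factor $c_{\alpha,k}$ together with the values $\bfip{z_1^k,z_1^k}$ and $\bfip{z_1^{k-1}z_3,z_1^{k-1}z_4}$ immediately reproduces $p(w)$; here no reduction modulo $\Ila$ is needed. The even representative $z_1^{k-1}z_2$ is the crux: now two monomials of $(z|w)^k$ contribute, namely $z_1^{k-1}z_2$ with coefficient $k\alpha\,w_1^{k-1}w_2$ and $z_1^{k-2}z_3z_4$ with coefficient $-\tfrac{k(k-1)}{4}w_1^{k-2}w_3w_4$, whose pairings are $\bfip{z_1^{k-1}z_2,z_1^{k-1}z_2}=-(k-1)!(-\alpha)_{k-1}$ and $\bfip{z_1^{k-1}z_2,z_1^{k-2}z_3z_4}=2(k-1)!(-\alpha)_{k-1}$. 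Collecting the terms and dividing by $c_{\alpha,k}^{-1}=k!(-\alpha)_{k-1}(k-1-\alpha)$ yields
\[ \bfip{z_1^{k-1}z_2,\ds I_{\alpha,k}(z,w)} = \frac{-1}{k-1-\alpha}\Big(\alpha\,w_1^{k-1}w_2 + \tfrac{k-1}{2}\,w_1^{k-2}w_3w_4\Big), \]
and only after invoking the defining relation $w_3w_4\equiv -2w_1w_2 \pmod{\Ila}$ and the identity $\alpha-(k-1)=-(k-1-\alpha)$ does this collapse to $w_1^{k-1}w_2=p(w)$.

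The hard part will be precisely this even case. One must compute the off-diagonal pairing $\bfip{z_1^{k-1}z_2,z_1^{k-2}z_3z_4}$, which is not directly covered by Proposition \ref{PropInnProd}, keep careful track of the signs produced by the anticommuting coordinates $z_3,z_4$ in the multinomial expansion, and then observe that the normalization $c_{\alpha,k}$ and the generator $2z_1z_2+z_3z_4$ of $\Ila$ conspire to cancel the Pochhammer factors. One also has to confirm, via the degree- and variable-bookkeeping encoded in Lemma \ref{LemBess}, that every remaining monomial of $(z|w)^k$ pairs trivially with the chosen $p$. If one prefers to avoid the casework, the same conclusion follows by invoking non-degeneracy (Corollary \ref{nondeg}) and instead checking $\bfip{q(w),\,\bfip{p(z),\ds I_{\alpha,k}(z,w)}}=\bfip{q,p}$ for $q$ in the same basis, but the essential computation is unchanged.
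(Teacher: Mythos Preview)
Your argument is correct. The route, however, is different from the paper's. Instead of expanding $(z|w)^k$ multinomially and matching each surviving monomial against the table in Proposition~\ref{PropInnProd}, the paper applies the Bessel operators directly to $(z|w)^k$: one first checks $\bessel(z_i)(z|w)=-\lambda w_i$, and then computes, for $\lambda=\alpha$,
\[
\bessel(z_1)(z|w)^k \;=\; -k(\alpha-k+1)\,w_1\,(z|w)^{k-1}\;-\;\tfrac{\alpha}{2}\,k(k-1)\,\Rsqa(w)\,z_2\,(z|w)^{k-2},
\]
so that modulo $\Ial$ in the $w$-variables each application of $\bessel(z_1)$ simply peels off a scalar factor and one power of $(z|w)$. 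Iterating $k-1$ times and finishing with a single $\bessel(z_i)$ on the remaining linear factor handles all four representatives $z_1^{k-1}z_i$ uniformly.

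Your approach instead leverages the already computed pairings and is more case-by-case; the $z_1^{k-1}z_2$ branch, as you correctly flag, requires the auxiliary value $\bfip{z_1^{k-1}z_2,\,z_1^{k-2}z_3z_4}$ (obtained via Proposition~\ref{Restriction} and the relation $z_3z_4\equiv -2z_1z_2$) and an explicit reduction of the answer modulo $\Ial$ in $w$. The paper's argument is more streamlined and makes transparent where the quotient enters---the $\Rsqa(w)$ correction appears automatically from the second-order part of $\bessel(z_1)$---while yours has the virtue of reducing everything to the orthogonality table already established and so requires no new differential computation. Both rest on the same normalization identity $c_{\alpha,k}^{-1}=(-1)^k k!(\alpha-k+1)_k=k!(-\alpha)_k=\bfip{z_1^k,z_1^k}$.
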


\begin{proof}
First note that
\begin{align*}
\bessel(z_i)(z|w) &= -\lambda w_i = -\dfrac{\lambda}{\alpha}\alpha w_i.
\end{align*}
For $\lambda=\alpha$ we have
\begin{align*}
\bessel(z_1)(z|w)^k &= (-\alpha+\E)\pt {z_1} (z|w)^k - z_2(\pt {z_1} \pt {z_2} +2\alpha\pt {z_3}\pt {z_4})(z|w)^k\\
&= -k(\alpha-k+1)w_1(z|w)^{k-1} - z_2k(k-1)(\alpha w_1w_2+\dfrac{1}{2}\alpha w_3 w_4)(z|w)^{k-2}\\
&= -k(\alpha-k+1)w_1(z|w)^{k-1} - \Rsq(w)\dfrac{\alpha}{2} z_2k(k-1) (z|w)^{k-2},
\end{align*}
where $\E := z_1\pt{z_1}+z_2\pt{z_2}+ z_3\pt{z_3}+z_4\pt{z_4}$ is the Euler operator. Iterating the previous calculation and working modulo $\mc I_\alpha$ we obtain
\begin{align*}
\bessel(z_1)^a(z|w)^k &= (-1)^a(k(k-1)\cdots(k-a))(\alpha-k+1)_a w_1^a(z|w)^{k-a},
\end{align*}
for $a\leq k$. Now suppose $p$ is a monomial. Since the Bessel-Fischer product restricts to $\Fock$ we may assume $p(z) = z_1^{k-1}z_i$. We have
\begin{align*}
\bfip{p(z),(z|\overline{w})^k} &= \bfip{z_i,\bessel(z_1)^{k-1}(z|\overline{w})^k}\\
 &= (-1)^{k-1}k!(\alpha-k+1)_{k-1} w_1^{k-1}\bfip{z_i,(z|\overline{w})}\\
 &= (-1)^{k}k!(\alpha-k+1)_{k} w_1^{k-1}w_i\\
 &=(-1)^{k}k!(\alpha-k+1)_{k}p(w),
\end{align*}
which gives us the desired result. The case $\lambda=1$ is entirely analogous.
\qedhere
\end{proof}

We will give a closed formula of the reproducing kernel in terms of the renormalised I-Bessel function. The I-Bessel function $I_\gamma(t)$ (or modified Bessel function of the first kind) is defined by
\begin{align*}
I_\gamma(t) := \left(\dfrac{t}{2}\right)^{\gamma}\sum_{k=0}^\infty \dfrac{1}{k!\Gamma(k+\gamma+1)}\left(\dfrac{t}{2}\right)^{2k},
\end{align*}
for $\gamma, t\in\C$, see \cite{AAR}, Section 4.12. We will use the renormalisation
\begin{align*}
\widetilde I_\gamma(t) := \left(\dfrac{t}{2}\right)^{-\gamma} I_\gamma(t).
\end{align*}

\begin{theorem}[Reproducing kernel of $\Fock$]\label{Theorem repr kernel}
Suppose either $\lambda=\alpha$ with $\alpha\not\in \N$ or $\lambda=1$ with $\alpha^{-1}\not\in \N$. Define the superfunction $\ds I_\lambda(z,w)$ by
\begin{align*}
\ds I_\alpha(z,w) &:= \Gamma(-\alpha)\widetilde I_{-1-\alpha}\left(2\sqrt{(z|\overline w)}\right),\\
\ds I_1(z,w) &:= \Gamma(-\alpha^{-1})\widetilde I_{-1-\alpha^{-1}}\left(2\sqrt{\alpha^{-1}(z|\overline w)}\right).
\end{align*}
For all $p\in \Fock$ we have
\begin{align*}
\bfip{p(z),\ds I_\lambda(z,w)} = p(w).
\end{align*}
\end{theorem}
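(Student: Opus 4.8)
The plan is to recognise the closed-form kernel $\ds I_\lambda(z,w)$ as the sum over all degrees of the graded kernels $\ds I_{\lambda,k}(z,w)$ produced in Lemma \ref{Lemma rep kernel}, and then to deduce the reproducing property degree by degree from that lemma together with the orthogonality of homogeneous components.

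First I would prove the series identity
\begin{align*}
\ds I_\lambda(z,w) = \sum_{k=0}^\infty \ds I_{\lambda,k}(z,w).
\end{align*}
For $\lambda=\alpha$ one simply substitutes $\gamma = -1-\alpha$ and $t = 2\sqrt{(z|\overline w)}$ into the defining series of $\widetilde I_\gamma$; since $(t/2)^{2k} = (z|\overline w)^k$ this yields $\ds I_\alpha(z,w) = \Gamma(-\alpha)\sum_{k\geq 0}(k!\,\Gamma(k-\alpha))^{-1}(z|\overline w)^k$. Comparing with the coefficient in Lemma \ref{Lemma rep kernel} reduces the identity to the elementary Gamma--Pochhammer relation $\Gamma(k-\alpha)/\Gamma(-\alpha) = (-\alpha)_k = (-1)^k(\alpha-k+1)_k$, which holds termwise. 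The hypothesis $\alpha\notin\N$ (resp. $\alpha^{-1}\notin\N$) guarantees that $\Gamma(-\alpha)$ is finite and that every $(\alpha-k+1)_k$ is nonzero, so each term is well defined; this is exactly the non-degeneracy range of Corollary \ref{nondeg}. The case $\lambda=1$ is identical after replacing $\alpha$ by $\alpha^{-1}$, the extra factor $\alpha^{-k}$ being supplied by the $\sqrt{\alpha^{-1}}$ inside the Bessel argument.

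With the series identity in hand, I would fix $p\in\Fock$ and write it as its finite sum of homogeneous components $p=\sum_k p_k$. By sesquilinearity the pairing splits as $\bfip{p(z),\ds I_\lambda(z,w)} = \sum_k\bfip{p_k(z),\ds I_\lambda(z,w)}$. For a homogeneous $p_k$ of degree $k$ the operator $p_k(\bessel)$ is homogeneous of degree $-k$, so after evaluation at $z=0$ only the degree-$k$ part of the conjugated kernel survives; equivalently, Proposition \ref{PropOrthog} kills every cross-degree contribution. Hence $\bfip{p_k(z),\ds I_\lambda(z,w)} = \bfip{p_k(z),\ds I_{\lambda,k}(z,w)} = p_k(w)$ in $\Fock$ by Lemma \ref{Lemma rep kernel}, and summing over $k$ gives $\bfip{p(z),\ds I_\lambda(z,w)} = p(w)$.

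The step requiring the most care is interchanging the infinite sum defining $\ds I_\lambda$ with the action of $p(\bessel)$. This is the main (though modest) obstacle: one must justify that, although $\ds I_\lambda$ is an infinite power series, the finite-order differential operator $p(\bessel)$ followed by evaluation at the origin isolates a single term, so no analytic convergence argument is needed and the manipulation is purely formal. Everything else is the termwise Gamma--Pochhammer bookkeeping of the first step.
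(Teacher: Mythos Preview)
Your proposal is correct and follows essentially the same route as the paper: expand the normalised $I$-Bessel function into its power series to identify $\ds I_\lambda(z,w)=\sum_{k\geq 0}\ds I_{\lambda,k}(z,w)$ via the Gamma--Pochhammer identity, then invoke Lemma~\ref{Lemma rep kernel} together with the orthogonality of homogeneous degrees (Proposition~\ref{PropOrthog}). The paper's proof is terser---it simply states that the result ``follows from Lemma~\ref{Lemma rep kernel} and the orthogonality property''---while you spell out the degree-by-degree reduction and the formal justification for why only finitely many terms contribute, but the logical content is the same.
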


\begin{proof}
Note that 
\begin{align*}
 \Gamma(-\alpha)\widetilde I_{-1-\alpha}\left(2\sqrt{(z|\overline w)}\right) &= \sum_{k=0}^\infty \dfrac{1}{k!}\dfrac{\Gamma(-\alpha)}{\Gamma(k-\alpha)}(z|\overline{w})^k  \\
&=  \sum_{k=0}^\infty \dfrac{(-1)^k}{k!}\left(\alpha-k+1\right)_k^{-1}(z|\overline{w})^k \\
&=\sum_{k=0}^\infty \ds I_{\alpha,k}(z,w), 
\end{align*}
and similarly
\begin{align*}
 \Gamma(-\alpha^{-1})\widetilde I_{-1-\alpha^{-1}}\left(2\sqrt{\alpha^{-1}(z|\overline w)}\right)=\sum_{k=0}^\infty \ds I_{1,k}(z,w) .
\end{align*}
The proposition then follows from Lemma \ref{Lemma rep kernel} and the orthogonality property.
\end{proof}

\section{Properties of the Fock Representation}
We will now show that the Fock representation is skew-symmetric with respect to the Bessel-Fischer product. Furthermore, we will also give an explicit $\mf{k}$- and $\mf{k}_0$-decomposition of the Fock model. Just like in Section \ref{Section Fock space}, we will again assume $\lambda \in \{1,\alpha\}$ and $\alpha\not=0$ and $\alpha\not=1$.

\subsection{Skew-symmetric}
We start with some preparatory lemma's. 
\begin{lemma}\label{LemEulerish}
Let $p,q\in \Fock$ be two homogeneous elements of degree $k\geq 1$ and define $(z^1, z^2, z^3, z^4) := (z_1, z_2, z_4, z_3) $. For $\lambda=\alpha$ and $j\neq 1$ we have
\begin{align*}
\bfip{z_i \pt {z_j} p, q} &= p_j\overline{q}_{z^i}\bfip{z_1^{k-1}z_i,z_1^{k-1}z^i},\\
\bfip{{z_1} \pt {{z_1}} p, q} &= (k-1)\bfip{p,q}+p_1\overline{q}_{1}\bfip{z_1^k,z_1^k}, \\
\bfip{{z_2} \pt {{z_1}} p, q} &= kp_1\ol q_2\bfip{z_1^{k-1}{z_2}, z_1^{k-1}{z_2}}, \\
\bfip{z_3 \pt {{z_1}} p, q} &=k p_1\ol q_4\bfip{z_1^{k-1}z_3, z_1^{k-1}{z_4}}-2(k-1)p_4\ol q_2\bfip{z_1^{k-1}{z_2}, z_1^{k-1}{z_2}}, \\
\bfip{{z_4} \pt {{z_1}} p, q} &=-k p_1\ol q_3\bfip{z_1^{k-1}z_3, z_1^{k-1}{z_4}}+2(k-1)p_3\ol q_2\bfip{z_1^{k-1}{z_2}, z_1^{k-1}{z_2}},
\end{align*}
where $q_{z^i}$ denotes the coefficient of the $z_1^{k-1}z^i$ term in $q$.
\end{lemma}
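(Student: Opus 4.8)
The plan is to prove all five identities by a direct computation on canonical representatives, reducing modulo $\Ial$ and then invoking the explicit pairings of Proposition~\ref{PropInnProd}. I would write $p\equiv p_1z_1^k+p_2z_1^{k-1}z_2+p_3z_1^{k-1}z_3+p_4z_1^{k-1}z_4$ and likewise for $q$, which is legitimate since the Bessel--Fischer product descends to $\Fock$ by Proposition~\ref{Restriction}, is sesquilinear, and vanishes across distinct degrees by Proposition~\ref{PropOrthog}. The only nonzero values I will need are those of Proposition~\ref{PropInnProd} with $k$ replaced by $k-1$, together with the observation that under the Bessel--Fischer product $z_i$ pairs nontrivially only with $z^i$, where $(z^1,z^2,z^3,z^4)=(z_1,z_2,z_4,z_3)$. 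This is exactly what collapses a single evaluation $\bfip{z_1^{k-1}z_i,q}$ into $\overline{q_{z^i}}\bfip{z_1^{k-1}z_i,z_1^{k-1}z^i}$.

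For the first identity ($j\neq 1$) the computation is immediate: since $z_1$ is even and only one monomial of $p$ contains $z_j$, one has $\pt{z_j}p=p_jz_1^{k-1}$, hence $z_i\pt{z_j}p=p_jz_1^{k-1}z_i$ with no sign, and sesquilinearity together with the pairing observation gives the claim directly. The second identity follows from the algebraic identity $z_1\pt{z_1}p=(k-1)p+p_1z_1^k$, obtained by comparing coefficients; expanding $\bfip{(k-1)p+p_1z_1^k,q}$ and isolating the $z_1^k$ component via orthogonality then yields the stated formula.

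The three remaining identities are the substance of the lemma, and here the real work is the reduction modulo $\Ial$. I would first compute $\pt{z_1}p=kp_1z_1^{k-1}+(k-1)(p_2z_1^{k-2}z_2+p_3z_1^{k-2}z_3+p_4z_1^{k-2}z_4)$ and multiply by $z_2$, $z_3$, $z_4$ in turn, tracking the super-signs $z_3^2=z_4^2=0$ and $z_3z_4=-z_4z_3$. The resulting polynomials leave the canonical form, so I reduce them using the generators $2z_1z_2+z_3z_4$, $z_2^2$, $z_2z_3$, $z_2z_4$ of $V_\alpha$ multiplied by $z_1^{k-2}$, via $\Ial=\mathcal P(\K^{2|2})V_\alpha$ from~(\ref{Eq I lambda}). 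The key congruences are $z_1^{k-2}z_2^2\equiv z_1^{k-2}z_2z_3\equiv z_1^{k-2}z_2z_4\equiv 0$ and, crucially, $z_1^{k-2}z_3z_4\equiv -2z_1^{k-1}z_2$. This last relation is precisely what produces the cross terms proportional to $p_4\overline{q_2}$ and $p_3\overline{q_2}$ in the $z_3\pt{z_1}$ and $z_4\pt{z_1}$ identities, whereas for $z_2\pt{z_1}$ all off-canonical monomials vanish and only $kp_1z_1^{k-1}z_2$ survives. Applying the pairing values and using $\bfip{z_1^{k-1}z_4,z_1^{k-1}z_3}=-\bfip{z_1^{k-1}z_3,z_1^{k-1}z_4}$ then gives the stated signs.

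I expect the reduction step to be the only genuine obstacle: everything else is linear bookkeeping, but one must correctly track the super-signs when multiplying $\pt{z_1}p$ by an odd coordinate and then consistently apply $z_1^{k-2}z_3z_4\equiv -2z_1^{k-1}z_2$. A minor point worth verifying is the boundary case $k=1$, where the factor $(k-1)$ annihilates the off-canonical monomials before any reduction is needed, so the formulas remain valid. The analogous $\lambda=1$ statement, should it be required, is obtained by interchanging the roles of $z_1$ and $z_2$ (and $\alpha\leftrightarrow\alpha^{-1}$) throughout, using the corresponding half of Proposition~\ref{PropInnProd}.
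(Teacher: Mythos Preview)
Your proposal is correct and follows essentially the same approach as the paper's own proof: both expand $p$ in the canonical form $z_1^{k-1}(p_1z_1+p_2z_2+p_3z_3+p_4z_4)$, apply the operators $z_i\pt{z_j}$ directly, reduce the resulting monomials modulo $\Ial$ (in particular via $z_1^{k-2}z_3z_4\equiv -2z_1^{k-1}z_2$), and then read off the answer using the explicit pairings of Proposition~\ref{PropInnProd}. Your remarks on the boundary case $k=1$ and on the $\lambda=1$ analogue are harmless extras not spelled out in the paper but consistent with it.
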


\begin{proof}
If we use Proposition \ref{PropInnProd} and the fact that we are working modulo $\Ial$, we obtain
\begin{align*}
\bfip{z_i \pt {z_j} p, q} &= \bfip{z_i \pt {z_j} (p_1 {z_1}+p_2 {z_2}+p_3 z_3+p_4 {z_4})z_1^{k-1}, q} = p_{z_j}\bfip{z_iz_1^{k-1}, q}\\
&= p_{z_j}\overline{q}_{z^i}\bfip{z_1^{k-1}z_i,z_1^{k-1}z^i},\\
\bfip{z_1\pt {z_1} p, q} &= \bfip{{z_1} (k p_1 {z_1}+ (k-1)(p_2 {z_2}+p_3 z_3+p_4{z_4}))z_1^{k-2}, q}\\
&= (k-1)\bfip{p,q}+p_1\overline{q}_{z_1}\bfip{z_1^k,z_1^k},\\
\bfip{{z_2}\pt {z_1} p, q} &= \bfip{{z_2} (k p_1 {z_1}+ (k-1)(p_2 {z_2}+p_3 z_3+p_4{z_4}))z_1^{k-2}, q}\\
&= kp_1\ol q_2\bfip{z_1^{k-1}{z_2}, z_1^{k-1}{z_2}},\\
\bfip{z_3\pt {z_1} p, q} &= \bfip{z_3 (k p_1 {z_1}+ (k-1)(p_2 {z_2}+p_3 z_3+p_4{z_4}))z_1^{k-2}, q}\\
&= \bfip{k p_1 z_1^{k-1}z_3 + (k-1)p_4 z_1^{k-2}z_3{z_4}, q}\\
&= k p_1 \bfip{z_1^{k-1}z_3, q} + (k-1)p_4\bfip{z_1^{k-2}(-2{z_1}{z_2}), q}\\
&= k p_1\ol q_4\bfip{z_1^{k-1}z_3, z_1^{k-1}{z_4}}-2(k-1)p_4\ol q_2\bfip{z_1^{k-1}{z_2}, z_1^{k-1}{z_2}}\\
\end{align*}
and
\begin{align*}
\bfip{{z_4}\pt {z_1} p, q} &= \bfip{{z_4} (k p_1 {z_1}+ (k-1)(p_2 {z_2}+p_3 z_3+p_4{z_4}))z_1^{k-2}, q}\\
&= \bfip{k p_1 z_1^{k-1}{z_4} + (k-1)p_3 z_1^{k-2}{z_4} z_3, q}\\
&= k p_1 \bfip{z_1^{k-1}{z_4}, q} + (k-1)p_3\bfip{z_1^{k-2}(2{z_1}{z_2}), q}\\
&= -k p_1\ol q_3\bfip{z_1^{k-1}z_3, z_1^{k-1}{z_4}} + 2(k-1)p_3\ol q_2\bfip{z_1^{k-1}{z_2}, z_1^{k-1}{z_2}},
\end{align*}
proving the lemma.
\end{proof}

\begin{lemma}\label{LemSkewSymRho}
For all $p,q\in \Fock$, with $\lambda\in\{1,\alpha\}$, we have
\begin{align*}
\bfip{{z_1} \pt {z_1} p, q} &= \bfip{p, {z_1} \pt {z_1} q}, & \bfip{{z_2} \pt {z_2} p, q} &= \bfip{p, {z_2} \pt {z_2} q},\\
\bfip{z_3 \pt{z_4} p, q} &= -\bfip{p,{z_3} \pt {z_4} q}, & \bfip{{z_4} \pt{ z_3} p, q} &= -\bfip{p,{z_4} \pt {z_3} q},\\
\bfip{z_3 \pt {z_3} p, q} &= \bfip{p,{z_4} \pt{z_4} q}, & \bfip{{z_4} \pt{z_4} p, q} &= \bfip{p, z_3 \pt {z_3} q}.
\end{align*}
\end{lemma}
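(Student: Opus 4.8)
The plan is to reduce each identity to the explicit coefficient formulas of Lemma~\ref{LemEulerish}. First I would note that every operator occurring in the statement, namely $z_1\pt{z_1}$, $z_2\pt{z_2}$, $z_3\pt{z_4}$, $z_4\pt{z_3}$, $z_3\pt{z_3}$ and $z_4\pt{z_4}$, preserves the total degree of a polynomial. Hence, by the orthogonality property (Proposition~\ref{PropOrthog}), both sides of each equation vanish unless $p$ and $q$ are homogeneous of the same degree $k$. The case $k=0$ is immediate, since each of these operators annihilates a constant, so both sides are zero. For $k\geq 1$ I would, using sesquilinearity, take $p$ and $q$ of pure parity, and in the case $\lambda=\alpha$ represent them in the degree-$k$ form $p=z_1^{k-1}\sum_i p_iz_i$ and $q=z_1^{k-1}\sum_i q_iz_i$ from the description of $\Fock$.

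With this normalisation each left-hand side is given directly by Lemma~\ref{LemEulerish}. For the right-hand sides, where the operator sits in the second argument, I would transfer it back to the first argument by superhermitianity of the Bessel--Fischer product, writing $\bfip{p,z_i\pt{z_j}q}=(-1)^{|p||q|}\bfipbar{z_i\pt{z_j}q,p}$ and using that each operator is parity-homogeneous so that $|z_i\pt{z_j}q|=|q|$. The product $\bfip{z_i\pt{z_j}q,p}$ is then again evaluated by Lemma~\ref{LemEulerish}, now with $p$ and $q$ interchanged. The conjugates introduced in this step are harmless: the scalar products $\bfip{z_1^{k},z_1^{k}}$, $\bfip{z_1^{k-1}z_2,z_1^{k-1}z_2}$ and $\bfip{z_1^{k-1}z_3,z_1^{k-1}z_4}$ of Proposition~\ref{PropInnProd} are real (as $\alpha\in\R$), and the remaining factor $\bfipbar{q,p}$ becomes $\bfip{p,q}$ upon applying superhermitianity a second time.

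It then remains to compare the two resulting expressions coefficient by coefficient. The sign bookkeeping is what makes the identities hold: a product such as $p_1\overline{q}_1$ is nonzero only when both $p$ and $q$ are even, while a product such as $p_3\overline{q}_4$ forces both to be odd, so the factor $(-1)^{|p||q|}$ equals $+1$ on the even--even contributions and $-1$ on the odd--odd contributions, and in each case it combines with the (anti)symmetry of the relevant scalar product from Proposition~\ref{PropInnProd} to produce exactly the stated sign. For example, for $\bfip{z_3\pt{z_3}p,q}=\bfip{p,z_4\pt{z_4}q}$ the left side is $p_3\overline{q}_4\bfip{z_1^{k-1}z_3,z_1^{k-1}z_4}$, whereas on the right the antisymmetry $\bfip{z_1^{k-1}z_4,z_1^{k-1}z_3}=-\bfip{z_1^{k-1}z_3,z_1^{k-1}z_4}$ supplies a sign $-1$ that is cancelled by $(-1)^{|p||q|}=-1$ coming from $p$ and $q$ both odd. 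The case $\lambda=1$ would be handled by the identical computation with $z_1$ and $z_2$ interchanged and $\alpha$ replaced by $\alpha^{-1}$ where it occurs.

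The main obstacle I expect is exactly this sign tracking: keeping the conjugations from superhermitianity, the reality of the scalar products, and the parity factors $(-1)^{|p||q|}$ mutually consistent across all six identities and both values of $\lambda$. There is no real conceptual difficulty, since every evaluation needed is already packaged in Lemmas~\ref{LemBess} and~\ref{LemEulerish} and Proposition~\ref{PropInnProd}; the work is purely in verifying that the signs cancel in each of the surviving matched-parity terms.
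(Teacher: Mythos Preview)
Your proposal is correct and follows essentially the same route as the paper, which simply says the lemma follows directly from Lemma~\ref{LemEulerish} (and its analogue for $\lambda=1$). The only organizational difference is that you evaluate the right-hand sides by invoking superhermitianity to reduce to another instance of Lemma~\ref{LemEulerish}, whereas the paper implicitly expects one to compute $\bfip{p,z_i\pt{z_j}q}$ directly by the same method as in that lemma; since superhermitianity is itself an immediate corollary of Proposition~\ref{PropInnProd}, the two are equivalent and there is no circularity.
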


\begin{proof}
This follows directly from Lemma \ref{LemEulerish} for $\lambda=\alpha$ and from an entirely analogous version of Lemma \ref{LemEulerish} for $\lambda=1$.
\end{proof}

\begin{Prop}\label{PropSkewSymRho}
The Fock representation $\rol$ on $\Fock$, with $\lambda\in\{1,\alpha\}$, is skew-supersymmetric with respect to the Bessel-Fischer product, i.e.,
\begin{align*}
\bfip{\rol(X)p,q} = - (-1)^{|X||p|}\bfip{p,\rol(X)q},
\end{align*}
for all $X\in \mathfrak{g}$ and $p,q\in \Fock$.
\end{Prop}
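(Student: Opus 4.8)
The assertion is precisely that every operator $\rol(X)$ is skew-superadjoint for the Bessel--Fischer product. My plan is to exploit that skew-superadjointness is stable under the supercommutator, reducing the claim to a generating set of $\mf g$, and then to verify those generators with the two lemmas above. Concretely, I would show that $\mf s:=\{X\in\mf g\mid \rol(X)\text{ satisfies the identity}\}$ is a Lie sub-superalgebra. If $X$ and $Y$ both satisfy the identity, applying it twice to the product $\rol(X)\rol(Y)$ gives
\[
\bfip{\rol(X)\rol(Y)p,q}=(-1)^{|X||Y|+|X||p|+|Y||p|}\bfip{p,\rol(Y)\rol(X)q},
\]
so that, since $\rol$ is a homomorphism and $\rol([X,Y])=\rol(X)\rol(Y)-(-1)^{|X||Y|}\rol(Y)\rol(X)$, a short computation yields $[X,Y]\in\mf s$. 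As the three grading satisfies $[\mf g_{\plus},\mf g_{\minus}]=\mf g_0$, the subspace $\mf g_{\minus}\oplus\mf g_{\plus}$ generates $\mf g$, and it suffices to check the identity on the eight generators $f_1,f_2,\zeta,\theta$ and $e_1,e_2,\xi,\eta$. (When $\alpha=-1$ one must also include the derivations $d_-,d_+,d_0$, whose images $z_3\pt{z_4}$, $z_4\pt{z_3}$ and $z_4\pt{z_4}-z_3\pt{z_3}$ are skew-superadjoint directly from Lemma \ref{LemSkewSymRho}.)

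Next I would record two adjunction rules. The identity $\bfip{z_ip,q}=(-1)^{|i||p|}\bfip{p,\bessel(z_i)q}$ was noted above, and combining it with the superhermitianity of the product gives the companion rule $\bfip{\bessel(z_i)p,q}=(-1)^{|i||p|}\bfip{p,z_iq}$. Hence multiplication by $z_i$ and the Bessel operator $\bessel(z_i)$ are mutually super-adjoint, so $z_i+\bessel(z_i)$ is self-superadjoint and $z_i-\bessel(z_i)$ is skew-superadjoint for every $i$. This immediately settles the generators $2L_{e_i},2L_{\xi},2L_{\eta}$ of $\mf g_0$, whose images are exactly $z_i-\bessel(z_i)$.

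For the generators of $\mf g_{\pm}$ I would use the decomposition
\[
\rol(f_i)=-\tfrac{\imath}{2}\big((z_i+\bessel(z_i))+B_i\big),\qquad
\rol(e_i)=-\tfrac{\imath}{2}\big((z_i+\bessel(z_i))-B_i\big),
\]
and the analogous splittings of $\rol(\zeta),\rol(\theta),\rol(\xi),\rol(\eta)$ with $z_3,z_4$, where $B_i$ is the first-order operator appearing in the explicit formula for $\rol$. Because the product is sesquilinear, transferring the scalar $-\tfrac{\imath}{2}$ into the second slot conjugates it to $+\tfrac{\imath}{2}$; keeping track of this and of the parity sign $(-1)^{|X||p|}$ shows that skew-superadjointness of $\rol(f_i)$ or $\rol(e_i)$ is \emph{equivalent} to self-superadjointness of $(z_i+\bessel(z_i))\pm B_i$. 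The first summand is already self-superadjoint, so everything reduces to showing that each $B_i$ is self-superadjoint. For the even generators, $B_1=-\lambda+2z_1\pt{z_1}+z_3\pt{z_3}+z_4\pt{z_4}$ and its $z_2,\lambda/\alpha$ analogue $B_2$ are handled by Lemma \ref{LemSkewSymRho} together with the reality of $\lambda$ and $\lambda/\alpha$, noting that $z_3\pt{z_3}$ and $z_4\pt{z_4}$ exchange under the adjoint so that their sum is self-adjoint. For the odd generators, $B_3,B_4$ contain the mixed terms $z_3\pt{z_1},z_2\pt{z_4},z_3\pt{z_2},z_1\pt{z_4}$ (and their $z_4$-analogues), whose self-superadjointness is read off from Lemma \ref{LemEulerish} for $\lambda=\alpha$ and from its $\lambda=1$ counterpart.

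The conceptual skeleton is short, and I expect the only delicate point to be the odd generators: one must correctly track the parities $(-1)^{|X||p|}$, the conjugation of the $\imath$-prefactor forced by sesquilinearity, and — most importantly — confirm that the four mixed first-order terms in each of $B_3$ and $B_4$ genuinely combine into a \emph{single} self-superadjoint operator rather than only pairing up partially. This last verification is exactly what Lemma \ref{LemEulerish} (and its $\lambda=1$ analogue), phrased through the degree-$k$ representatives $z_1^{k-1}\sum_i p_iz_i$ and $z_2^{k-1}\sum_i p_iz_i$, is designed to settle.
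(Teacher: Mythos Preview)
Your proposal is correct and, at the level of the key computation, coincides with the paper's argument: both reduce skew-supersymmetry to the mutual super-adjointness of $z_i$ and $\bessel(z_i)$ together with the self-superadjointness of the first-order operators $B_i$ (equivalently, of $\pil(2L_a)$), and both invoke Lemma~\ref{LemSkewSymRho} for the even pieces and Lemma~\ref{LemEulerish} for the odd ones. One small point: the odd case does not follow from Lemma~\ref{LemEulerish} alone; after expanding $\bfip{B_3 p,q}$ and $\bfip{p,B_3 q}$ via that lemma one still needs the explicit values in Proposition~\ref{PropInnProd} (e.g.\ $k\,\bfip{z_1^{k-1}z_3,z_1^{k-1}z_4}=2\,\bfip{z_1^{k},z_1^{k}}$) to match the two sides, exactly as the paper does.

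The organizational difference is that you first observe that the set of skew-superadjoint elements is a Lie sub-superalgebra and then check only a generating set $\mf g_-\cup\mf g_+$ (plus $d_-,d_+,d_0$ when $\alpha=-1$), whereas the paper runs through the $\mf k$-adapted basis $(a,0,a)$, $(a,0,-a)$ and the derivations directly. Your reduction buys a modest economy: the inner-derivation elements $4[L_{e_1},L_\xi]$, $4[L_{e_1},L_\eta]$ come for free from $[\mf g_+,\mf g_-]$ instead of requiring their own verification. Conversely, the paper's basis makes the role of the Cayley transform transparent, since $\rol(a,0,a)=-\imath(z_a+\bessel(z_a))$ and $\rol(a,0,-a)=\pil(2\imath L_a)$ are exactly the self- and skew-superadjoint building blocks. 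Either route leads to the same hands-on check for the odd generators.
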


\begin{proof}
Once again we only consider the $\lambda=\alpha$ case, the $\lambda=1$ being entirely analogous. Suppose $p,q\in \Fock$ are homogeneous polynomials of degree $k$ and $a\in D_\alpha$. We have
\begin{align*}
\bfip{\rol(a,0,a)p,q} &= \bfip{-\imath(a+\bessel(a)) p,q} =  -\imath(-1)^{|a||p|}\bfip{ p,(\bessel(a)+a)q}\\&=- (-1)^{|a||p|}\bfip{p,\rol(a,0,a)q}.
\end{align*}
Now we look at
\begin{align*}
\bfip{\rol(a,0,-a)p,q} = \bfip{\pil(2\imath L_{a})p,q}.
\end{align*}
For $a\in\{e_1,e_2\}$ we use Lemma \ref{LemSkewSymRho} to see
\begin{align*}
\bfip{\pil(2\imath L_{a})p,q} = -\bfip{p,\pil(2\imath L_{a})q}.
\end{align*}
For $a=\xi$ we need to prove
\begin{align*}
\bfip{\pil(2\imath L_{\xi})p,q} = -(-1)^{|p|}\bfip{p,\pil(2\imath L_{\xi})q},
\end{align*}
or, equivalently,
\begin{align*}
\bfip{\pil(2 L_{\xi})p,q} = (-1)^{|p|}\bfip{p,\pil(2 L_{\xi})q}.
\end{align*}
Using Lemma \ref{LemEulerish} we find
\begin{align*}
\bfip{\pil(2 L_{\xi})p,q} &=  \bfip{-{z_3}(\pt {z_1}+\pt {z_2} )p, q} + \bfip{-2({z_1}+\alpha z_2)\pt{z_4} p, q} \\
&= +k p_1 \ol q_4\bfip{z_1^{k-1}{z_3}, z_1^{k-1}{z_4}} - 2(k-1)p_4 \ol q_2 \bfip{z_1^{k-1}{z_2}, z_1^{k-1}{z_2}}\\
&\quad  +2\alpha p_4 \ol q_2 \bfip{z_1^{k-1}{z_2}, z_1^{k-1}{z_2}}+ 2 p_4\ol q_1 \bfip{z_1^{k}, z_1^{k}}\\
&\quad + p_2 \ol q_4\bfip{z_1^{k-1}{z_3}, z_1^{k-1}{z_4}}\\
&=  2(\alpha-k+1)p_4\ol q_2 \bfip{z_1^{k-1}{z_2}, z_1^{k-1}{z_2}}\\
&\quad +(k p_1 +p_2)\ol q_4 \bfip{z_1^{k-1}{z_3}, z_1^{k-1}{z_4}} +2p_4\ol q_1 \bfip{z_1^{k}, z_1^{k}}
\end{align*}
and similarly
\begin{align*}
-\bfip{p,\pil(2L_{\xi})q} &= \bfip{ p, {z_3}\pt {z_1}q} +2\alpha\bfip{ p,{z_2}\pt{z_4} q} + \bfip{p,{z_3}\pt {z_2} q}+2\bfip{ p, {z_1}\pt{z_4} q}\\
&=  2(\alpha-k+1)p_2\ol q_4 \bfip{z_1^{k-1}{z_2}, z_1^{k-1}{z_2}}\\
&\quad -(k\ol q_1  +\ol q_2)p_4 \bfip{z_1^{k-1}{z_3}, z_1^{k-1}{z_4}} +2p_1\ol q_4 \bfip{z_1^{k}, z_1^{k}}
\end{align*}
If $|p|=0$, then $p_3 = p_4 = 0$ and what we need to prove reduces to
\begin{align*}
(k p_1 +p_2) \bfip{z_1^{k-1}{z_3}, z_1^{k-1}{z_4}} &=  2(\alpha-k+1)p_2 \bfip{z_1^{k-1}{z_2}, z_1^{k-1}{z_2}} \\
&\quad +2p_1\bfip{z_1^{k}, z_1^{k}}.
\end{align*}
Separating the $p_1$ and $p_2$ terms we obtain
\begin{align*}
\left\lbrace\begin{array}{lcl}
k \bfip{z_1^{k-1}{z_3}, z_1^{k-1}{z_4}} &=&  2\bfip{z_1^{k}, z_1^{k}},\\
\bfip{z_1^{k-1}{z_3}, z_1^{k-1}{z_4}} &=&  2(\alpha-k+1)p_2 \bfip{z_1^{k-1}{z_2}, z_1^{k-1}{z_2}},
\end{array}\right.
\end{align*}
which holds because of Proposition \ref{PropInnProd}.
If $|p|=1$, then $p_1 = p_2 = 0$ and what we need to prove reduces to
\begin{align*}
(k\ol q_1  +\ol q_2) \bfip{z_1^{k-1}{z_3}, z_1^{k-1}{z_4}} &= 2(\alpha-k+1)\ol q_2 \bfip{z_1^{k-1}{z_2}, z_1^{k-1}{z_2}}\\
&\quad +2\ol q_1 \bfip{z_1^{k}, z_1^{k}}
\end{align*}
which holds similarly. The case $a=\eta$ is entirely analogous. Similar calculations also give us
\begin{align*}
\bfip{\rol(0, 4[L_{e_1}, L_\xi], 0))p,q} = - (-1)^{|p|}\bfip{p,\rol(0, 4[L_{e_1}, L_{\xi}], 0))q},\\
\bfip{\rol(0, 4[L_{e_1}, L_{\eta}], 0))p,q} = - (-1)^{|p|}\bfip{p,\rol(0, 4[L_{e_1}, L_{\eta}], 0))q}.
\end{align*}
The remaining cases, which are $X = (0, 4[L_{\xi}, L_\xi], 0)$, $X=(0, 4[L_{\eta}, L_{\eta}], 0)$ and $X=(0, 4[L_{\xi}, L_\eta], 0)$ for $\alpha \neq -1$ and $X = (0, d_-, 0)$, $X=(0, d_+, 0)$ and $X=(0, d_0, 0)$ for $\alpha = -1$, follow directly from Lemma \ref{LemSkewSymRho}.
\qedhere
\end{proof}

\subsection{The $(\mf g, \mf k)$-module $F_\lambda$}

We define
\begin{align*}
F_\lambda &:= U(\mathfrak{g})1 \mod \Ila,
\end{align*}
where the $\mf g$-module structure is given by the Fock representation $\rol$. For $k\geq 1$ we introduce
\begin{align*}
F_{\lambda,0} &:= \C,\\
F_{\lambda,k} &:= (\C {z_1} \oplus \C {z_2} \oplus \C {z_3} \oplus \C {z_4})z_i^{k-1},\\
H_{\alpha, k} &:= (\C ({z_1} + (k-1-\alpha){z_2} )\oplus \C {z_3} \oplus \C {z_4})z_1^{k-1},\\
H_{1, k} &:= (\C ({z_2} +(k-1-\alpha^{-1}){z_1} )\oplus \C {z_3} \oplus \C {z_4})z_2^{k-1},
\end{align*}
where $i=1$ if $\lambda=\alpha$ and $i=2$ if $\lambda=1$.

\begin{theorem}[Decomposition of $F$]\label{ThDecF}
Suppose either $\lambda=\alpha$ and $\alpha\not\in \N$ or $\lambda=1$ and $\alpha^{-1}\not\in \N$. We have the following:
\begin{itemize}
\item[(1)] \begin{itemize}
\item[(a)] For $\alpha \neq -1$ an explicit decomposition of $F_{\lambda,k}$ into irreducible $\mathfrak{k}_{0}$-modules is given by
\begin{align*}
F_{\lambda,k} = H_{\lambda,k} \oplus \C ({z_1} + {z_2})^k.
\end{align*}
\item[(b)] If $\alpha = -1$, then $F_{\lambda,k}$ is an indecomposable $\mathfrak{k}_{0}$-module, but not an irreducible $\mathfrak{k}_{0}$-module.
\end{itemize}
\item[(2)] $F_{\lambda,k}$ is an irreducible $\mf k$-module.
\item[(3)] $F_\lambda$ is an irreducible $\mathfrak{g}$-module and its $\mf k$-type decomposition is given by
\begin{align*}
F_{\lambda} = \bigoplus_{k=0}^\infty F_{\lambda,k}.
\end{align*}
\end{itemize}
\end{theorem}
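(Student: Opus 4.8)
The plan is to transport everything to the degree-preserving picture supplied by the Cayley transform and then read off all three assertions from the explicit operators. Since $c$ restricts to an isomorphism $\mf k_\C\xrightarrow{\sim}\mathfrak{str}(D_{\alpha,\C})=\g_0$ and $\rol=\pil\circ c$, the operators $\rol(X)$ with $X\in\mf k$ are exactly the degree-preserving operators $\pil(Y)$ with $Y\in\g_0$; in particular they preserve each homogeneous component of $\Fock$. Using the representatives $z_1^{k-1}(p_1z_1+p_2z_2+p_3z_3+p_4z_4)$ of $\Fock$ one identifies $F_{\lambda,k}$ with the degree-$k$ part of $\Fock$, so that $\bigoplus_{k\ge 0}F_{\lambda,k}=\Fock$ as vector spaces and the whole problem reduces to analysing these four-dimensional pieces. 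I treat $\lambda=\alpha$ throughout, the case $\lambda=1$ following by interchanging $z_1\leftrightarrow z_2$ and $\alpha\leftrightarrow\alpha^{-1}$.

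For (1) I first observe that $\mf k_0=\Der(D_\alpha)$ acts through $\pil$, and that a character vanishes on $\Inn=\Der$, so each $\pil(I)=\sum_i[I,X_i]\pt{X_i}$ with $I\in\mf k_0$ is a genuine vector field on the polynomial algebra. Because every derivation annihilates the unit $f=f_1+f_2$, we get $\pil(I)(z_1+z_2)=0$ and hence $\rol(\mf k_0)(z_1+z_2)^k=0$, so $\C(z_1+z_2)^k\equiv\C(z_1^k+kz_1^{k-1}z_2)$ is the trivial $\mf k_0$-submodule for every $\alpha$. A direct computation with the explicit operators $\rol(4[L_{e_1},L_\xi])$ and the like (where the relation $z_3z_4\equiv-2z_1z_2$ and Lemma~\ref{LemBess} enter) shows that $H_{\lambda,k}$ is a $\mf k_0$-submodule isomorphic to the defining module $\C^{1|2}$ of $\mathfrak{osp}(1|2)\cong\mf k_0$, hence irreducible. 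The two even generators $(z_1+(k-1-\alpha)z_2)z_1^{k-1}$ and $z_1^k+kz_1^{k-1}z_2$ are independent exactly when $\alpha\ne-1$ (here $\alpha\notin\N$ forces $k-1-\alpha\ne0$), which gives the direct sum of (1)(a). When $\alpha=-1$ these two even vectors coincide, so $\C(z_1+z_2)^k\subset H_{\lambda,k}$ and the trivial line cannot be split off; this is the step I expect to be most delicate, since $\mf k_0\cong\mathfrak{sl}(2)\ltimes(\R\oplus\R)$ is now non-reductive and complete reducibility fails, so indecomposability must be extracted from the submodule lattice by showing that $\C(z_1+z_2)^k$ is the unique minimal $\mf k_0$-submodule (every nonzero vector generates a submodule containing it, as $\rol(4[L_{e_1},L_\xi])$ already carries the odd vectors onto $(z_1+z_2)^k$), so the socle is simple and no decomposition into two nonzero summands exists.

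For (2) I bring in the operators coming from $\mf k\setminus\mf k_0$: by $\rol=\pil\circ c$ the degree-preserving operator $\pil(2L_{e_1})=\lambda-2z_1\pt{z_1}-z_3\pt{z_3}-z_4\pt{z_4}$ also lies in $\rol(\mf k)$, being $\rol(c^{-1}(2L_{e_1}))$. Applying it to the generator of $\C(z_1+z_2)^k$ and to the even generator of $H_{\lambda,k}$ produces in each case a vector whose even part is not proportional to the starting vector, the coefficient comparison reducing to the inequality $\lambda-2k\ne\lambda-2(k-1)$. Hence neither $\C(z_1+z_2)^k$ nor $H_{\lambda,k}$ is $\mf k$-stable; since by (1)(a) these are the only proper nonzero $\mf k_0$-submodules, any nonzero $\mf k$-submodule of $F_{\lambda,k}$ must be the whole space, giving irreducibility.

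For (3) I separate the $\mf k$-types by a central character. The central element $z_0\in\mf k$ with $c(z_0)=h=2L_{e_1}+2L_{e_2}$ acts via $\rol$ as $\pil(h)=\lambda\tfrac{\alpha+1}{\alpha}-2\E$, which is diagonalisable with the pairwise distinct eigenvalues $\lambda\tfrac{\alpha+1}{\alpha}-2k$ on the $F_{\lambda,k}$. Therefore any $\g$-submodule $N$, being stable under this one operator, decomposes as $N=\bigoplus_k(N\cap F_{\lambda,k})$ with each summand $0$ or all of $F_{\lambda,k}$ by (2). The genuine raising and lowering operators inside $\rol(\g)$ are the multiplications $z_i$ and the Bessel operators $\bessel(z_i)$ (the $\rol$-images of $c^{-1}(\g_-)$ and $c^{-1}(\g_+)$): starting from $1$ one reaches every $z_1^k$ by repeated multiplication, so $U(\g)1\supseteq F_{\lambda,k}$ for all $k$ and $F_\lambda=\bigoplus_kF_{\lambda,k}$. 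Finally, if $N\ne0$ then $N=\bigoplus_{k\in S}F_{\lambda,k}$ with $S$ closed under $k\mapsto k+1$ (because $z_1\cdot z_1^k=z_1^{k+1}\ne0$) and under $k\mapsto k-1$ (because $\bessel(z_1)z_1^k=k(k-1-\alpha)z_1^{k-1}\ne0$, where $\alpha\notin\N$ is exactly what keeps $k-1-\alpha\ne0$); hence $S=\N$ and $N=F_\lambda$, which proves irreducibility.
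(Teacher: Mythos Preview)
Your argument is essentially correct and follows the same overall strategy as the paper: identify $\rol(\mf k)$ with the degree-preserving operators $\pil(\g_0)$, analyse the four-dimensional pieces $F_{\lambda,k}$ by explicit computation, and use the multiplication/Bessel operators to move between degrees. A few points of comparison and one genuine gap:

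\textbf{Part (1).} Your observation that derivations kill the unit and hence $\rol(\mf k_0)(z_1+z_2)^k=0$ is cleaner than the paper, which simply reads this off from an action table. The identification of $H_{\lambda,k}$ as an irreducible $\mathfrak{osp}(1|2)$-module is plausible, but you still need the explicit computation you allude to; the paper does this via the table. Your socle argument for (1)(b) is more detailed than the paper's, which just notes that $4[L_{e_1},L_\xi]$ now sends $z_1^{k-1}z_4$ onto $(z_1+z_2)^k$.

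\textbf{Part (2).} Here you diverge from the paper. The paper exhibits explicit elements $\rho_k,\rho'_k\in U(\mf k)$, built from $\rol(f_i)-\rol(e_i)$ and the odd $\mf k_0$-operators, that send $z_1^{k-1}z_2$ to $(z_1+z_2)^k$ and back. You instead use the single degree-preserving operator $\pil(2L_{e_1})\in\rol(\mf k_\C)$ to show neither $\mf k_0$-summand is $\mf k$-stable. Your route is shorter, but your concluding sentence ``since by (1)(a) these are the only proper nonzero $\mf k_0$-submodules'' only covers $\alpha\neq -1$. For $\alpha=-1$ you must argue separately: your own (1)(b) gives that every nonzero $\mf k$-submodule contains $(z_1+z_2)^k$; applying $\pil(2L_{e_1})$ produces an independent even vector, so the submodule contains the full even part, and then the table (e.g.\ $4[L_{e_1},L_\xi]\cdot z_1^k=-kz_1^{k-1}z_3$) forces the odd part. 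This patch is easy but should be stated.

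\textbf{Part (3).} Your argument via the central element $c^{-1}(h)$ (note: this lies in $\mf k_\C$, not $\mf k$, but this is harmless since the module is complex) together with $z_1$ and $\bessel(z_1)$ as raising/lowering operators is exactly the paper's $\rho^\pm$ argument.
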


\begin{proof}
We give the proof for $\lambda=\alpha$. The $\lambda=1$ case is similar. Note that because we are working modulo $\Ial$, we have $({z_1} + {z_2})^k = z_1^{k-1}({z_1} + k{z_2})$. First suppose $\alpha\neq -1$. The action of $\mathfrak{k}_{0}$ on $F_{\lambda,k}$ is given by the following table:
\begin{align*}
\def\arraystretch{2}
\begin{array}{l||l|l|l|l}
 & z_1^k & z_1^{k-1}{z_2} & z_1^{k-1}{z_3} & z_1^{k-1}{z_4}\\ \hline\hline
4[L_{e_1},L_\xi]  & -k z_1^{k-1}{z_3} & z_1^{k-1}{z_3} & 0 & 2z_1^k \\
& & & & -2(\alpha-k+1)z_1^{k-1}{z_2}\\ \hline
4[L_{e_1},L_\eta]  & -k z_1^{k-1}{z_4} & z_1^{k-1}{z_4} & 2(\alpha-k+1)z_1^{k-1}{z_2} & 0\\
& & & -2z_1^k &\\ \hline
4[L_{{\xi}},L_{\xi}]  & 0 & 0 & 0 & 4(1+\alpha)z_1^{k-1}{z_3}\\ \hline
4[L_{{\eta}},L_{\eta}]  & 0 & 0 & -4(1+\alpha)z_1^{k-1}{z_4} & 0\\ \hline
4[L_{{\xi}},L_{\eta}]  & 0 & 0 & -2(1+\alpha)z_1^{k-1}{z_3} & 2(1+\alpha)z_1^{k-1}{z_4}
\end{array}
\end{align*}
This gives us (1)(a) after a straightforward verification. To prove (2) we now only need an element mapping $z_1^{k-1}{z_2}$ to $({z_1}+k{z_2})z_1^{k-1}$ and the reverse for every $k\in\N$. We find that
\begin{align*}
\rho_k &:= \rol(2[L_{e_1},L_\xi])\circ\rol(4[L_{e_1},L_\eta])-(1+\alpha)(\rol(f_2)- \rol(e_2))^2,\\
\rho'_k &:= \dfrac{(\rol(f_2)- \rol(e_2))}{2(\alpha-2k+1)}\circ((2k-\alpha)(\rol(f_2)- \rol(e_2))+ (\rol(f_1)- \rol(e_1)))
\end{align*}
are such elements. Define the following two elements of the action of $\mf g$:
\begin{align*}
\rho^+ &:= \rol(c^{-1}(\frac{-e_1}{2},0,0)) =\imath {z_1},\\
\rho^- &:= \rol(c^{-1}(0,0,-2e_1)) = \imath\bessel({z_1}).
\end{align*}
We find
\begin{align*}
\rho^+(z_1^k) &= \imath z_1^{k+1},\\
\rho^-(z_1^k) &= \imath\bessel({z_1}) z_1^k= \imath k(-\alpha+k-1)z_1^{k-1}.
\end{align*}
which shows that $\rho^+$ allows us to go to polynomials of higher degrees while $\rho^-$ allows us to go the other direction for $\alpha\not\in \N$. Therefore we obtain $(3)$. The $\alpha =-1$ case is entirely analogous, except that now $2[L_{e_1},L_{\xi}]\in \mf k_0$ sends $z_1^{k-1}z_4 \in H_{\alpha, k}$ to $(z_1+z_2)^k$. This implies that the decomposition of $F_{\lambda,k}$ no longer holds and since $\C ({z_1} + {z_2})^k$ is still an irreducible component in $F_{\lambda,k}$, this proves (1)(b).
\end{proof}

The following isomorphism is an immediate result of this theorem.

\begin{Cor}
Suppose either $\lambda=\alpha$ and $\alpha\not\in \N$ or $\lambda=1$ and $\alpha^{-1}\not\in \N$. Let
\begin{align*}
\Fock = \mathcal{P}(\C^{2|2})/{\mc I_\lambda}
\end{align*}
be the polynomial Fock space defined in Definition \ref{DefFock}. We have $\Fock \cong F_\lambda$.
\end{Cor}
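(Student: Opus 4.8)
The plan is to show that the two spaces are not merely isomorphic but literally equal as subspaces of $\mathcal{P}(\C^{2|2})/\Ila$, so that the claimed isomorphism is the identity. First I would note that $F_\lambda = U(\mathfrak{g})1 \mod \Ila$ is by construction the cyclic $\mathfrak{g}$-submodule of $\Fock$ generated by the class of the constant $1$; in particular the inclusion $F_\lambda \subseteq \Fock$ is automatic, and the only thing left to establish is the reverse inclusion $\Fock \subseteq F_\lambda$.

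The key input is the $\mf k$-type decomposition recorded in Theorem \ref{ThDecF}(3), which identifies $F_\lambda$ with $\bigoplus_{k=0}^\infty F_{\lambda,k}$, each $F_{\lambda,k}$ being one of the explicit subspaces of $\Fock$ introduced just before that theorem. The plan is then to match these summands against the graded pieces of $\Fock$. Since $V_\lambda \subseteq \mathcal P_2$ is homogeneous, the ideal $\Ila$ is graded and the quotient $\Fock$ inherits a grading by polynomial degree. By the explicit description of $\Fock$ given after Definition \ref{DefFock}, a homogeneous element of degree $k \geq 1$ is represented (for $\lambda=\alpha$) by $z_1^{k-1}\sum_{i=1}^4 p_i z_i$ and (for $\lambda=1$) by $z_2^{k-1}\sum_{i=1}^4 p_i z_i$, while the degree-$0$ part is $\C$. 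Comparing with
\[
F_{\lambda,0} = \C, \qquad F_{\lambda,k} = (\C z_1 \oplus \C z_2 \oplus \C z_3 \oplus \C z_4)z_i^{k-1} \quad (i=1 \text{ if } \lambda=\alpha,\ i=2 \text{ if } \lambda=1),
\]
one sees at once that $F_{\lambda,k}$ is precisely the homogeneous degree-$k$ component of $\Fock$.

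Collecting these identifications over all $k$ gives $\Fock = \bigoplus_{k=0}^\infty F_{\lambda,k} = F_\lambda$, which is the desired conclusion. I do not expect any genuine obstacle here: all of the substance --- exhibiting the subspaces $F_{\lambda,k}$, verifying they are $\mf k$-modules, and proving that $U(\mathfrak{g})1$ sweeps out all of them via the raising and lowering operators $\rho^+$ and $\rho^-$ --- was already carried out in Theorem \ref{ThDecF}. The corollary therefore reduces to the bookkeeping observation that these $\mf k$-types account for every graded piece of $\mathcal{P}(\C^{2|2})/\Ila$; the only point requiring a moment's care is confirming that the grading descends to the quotient, which holds because $\Ila$ is homogeneous.
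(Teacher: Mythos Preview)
Your proposal is correct and follows exactly the reasoning the paper intends: the corollary is stated as ``an immediate result'' of Theorem \ref{ThDecF}, and you have simply spelled out that immediacy by observing that the explicit $\mf k$-types $F_{\lambda,k}$ coincide with the homogeneous degree-$k$ pieces of $\Fock$. The only additional care you take---noting that $\Ila$ is homogeneous so the grading descends---is a welcome clarification but not a departure from the paper's approach.
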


Recall the $\mf{sl}(2)$-triple $\{e,f,h\}$ with $e=e_1+e_2$, $f=f_1+f_2$ and $h=2L_e$ from Section \ref{Three grading}. Using the Cayley transform $c$ we obtain another $\mf{sl}(2)$-triple $\mf s :=\{\tilde e,\tilde f,\tilde h\}$ where 
\begin{align*}
\tilde e := c^{-1}(e), \quad \tilde f &:= c^{-1}(f) \quad \mbox{ and } \quad \tilde h := c^{-1}(h).
\end{align*}
We have
\begin{align*}
\rol(\tilde f) &= \pil(f) = -2\imath ({z_1}+{z_2}),\\
\rol(\tilde h) &= \pil(h) = \lambda(1+\alpha^{-1})-2\E,\\
\rol(\tilde e) &= \pil(e) = -\dfrac{\imath}{2}(\bessel({z_1})+\bessel({z_2})),
\end{align*}
where $\E := {z_1}\pt {z_1} + {z_2}\pt {z_2} + {z_3}\pt{z_3} +{z_4} \pt {z_4}$ is the Euler operator. In particular we have
\begin{align*}
\rol(\tilde f)({z_1}+{z_2})^k &= -2\imath({z_1}+{z_2})^{k+1},\\
\rol(\tilde h)({z_1}+{z_2})^k &= (\lambda(1+\alpha^{-1})-2k)({z_1}+{z_2})^k,\\
\rol(\tilde e)({z_1}+{z_2})^k &= -\dfrac{\imath}{2}(k(-\lambda(1+\alpha^{-1})+k-1)({z_1}+{z_2})^{k-1},
\end{align*}
for $k\in \N$. For $\lambda=\alpha$ we also have
\begin{align*}
\rol(\tilde f)(({z_1} + (k-1-\alpha){z_2})z_1^{k-1}) &= -2\imath({z_1}+(k-\alpha){z_2})z_1^{k},\\
\rol(\tilde h)(({z_1} + (k-1-\alpha){z_2})z_1^{k-1}) &= (1+\alpha-2k)({z_1} + (k-1-\alpha){z_2})z_1^{k-1},\\
\rol(\tilde e)(({z_1} + (k-1-\alpha){z_2})z_1^{k-1}) &= -\dfrac{\imath}{2}(k-1)(k-1-\alpha)({z_1} + (k-2-\alpha){z_2})z_1^{k-2},
\end{align*}
where entirely analogous results hold for $\lambda=1$.
Define $G_\lambda := \sum_{k=0}^\infty \C ({z_1}+{z_2})^k$ and $H_\lambda := \sum_{k=0}^\infty H_{\lambda,k}$, then it is clear that $G_\lambda$ and $H_\lambda$ are invariant under the action of $\mf s$. Note also that $\mf s$ and $\mf k_0$ commute. This gives us the following theorem.

\begin{theorem}
Suppose either $\lambda=\alpha$ and $\alpha\not\in \N\cup \{-1\}$ or $\lambda=1$ and $\alpha^{-1}\not\in \N\cup \{-1\}$. Under the action of $(\mf s, \mf k_0)$ we have the decomposition
\begin{align*}
F_\lambda = G_\lambda \oplus H_\lambda.
\end{align*}
\end{theorem}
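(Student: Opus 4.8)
The plan is to assemble the statement from the structural results already established, since essentially all of the work has been done in Theorem~\ref{ThDecF}. First I would record the underlying vector-space decomposition: by part~(3) of Theorem~\ref{ThDecF} we have $F_\lambda=\bigoplus_{k\geq 0}F_{\lambda,k}$, and since the hypothesis excludes $\alpha=-1$, part~(1)(a) gives $F_{\lambda,k}=H_{\lambda,k}\oplus\C({z_1}+{z_2})^k$ for every $k$. Summing over $k$ yields $F_\lambda=H_\lambda\oplus G_\lambda$ as vector spaces, so the only thing left to prove is that each of $G_\lambda$ and $H_\lambda$ is stable under both $\mf s$ and $\mf k_0$.

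For the $\mf k_0$-action the argument is short. The subspace $H_\lambda=\bigoplus_k H_{\lambda,k}$ is $\mf k_0$-stable because each summand $H_{\lambda,k}$ is, by Theorem~\ref{ThDecF}(1)(a). For $G_\lambda$ it suffices to check that $\mf k_0$ annihilates every generator $({z_1}+{z_2})^k$; working modulo $\Ila$ one has $({z_1}+{z_2})^k\equiv z_1^{k-1}({z_1}+k{z_2})$ for $\lambda=\alpha$ (and the analogous relation for $\lambda=1$), and feeding this combination into the $\mf k_0$-action table from the proof of Theorem~\ref{ThDecF} makes the images cancel in each row. Thus $G_\lambda$ is in fact a trivial $\mf k_0$-module.

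For the $\mf s$-action I would use the explicit expressions for $\rol(\tilde f)$, $\rol(\tilde h)$ and $\rol(\tilde e)$ computed just before the theorem. Since $\rol(\tilde h)=\lambda(1+\alpha^{-1})-2\E$ acts by a scalar on each homogeneous component $F_{\lambda,k}$, it preserves any graded subspace automatically. The raising operator $\rol(\tilde f)$ is multiplication by $-2\imath({z_1}+{z_2})$ and the lowering operator $\rol(\tilde e)=-\tfrac{\imath}{2}(\bessel({z_1})+\bessel({z_2}))$ shifts degree by $\pm 1$; the displayed computations already show that they carry $G_\lambda$, and the even generator of $H_{\lambda,k}$, into the adjacent graded pieces of the same summand. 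The remaining cases are the action of $\rol(\tilde f)$ and $\rol(\tilde e)$ on the odd generators ${z_3}z_1^{k-1}$ and ${z_4}z_1^{k-1}$ of $H_{\alpha,k}$; these follow the identical routine, where one reduces products such as $({z_1}+{z_2}){z_3}z_1^{k-1}$ modulo $\Ila$ (using ${z_2}{z_3}\equiv{z_2}{z_4}\equiv 0$) and applies $\bessel({z_1}),\bessel({z_2})$ from Lemma~\ref{LemBess} to confirm closure inside $H_\lambda$.

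Finally, because $\mf s$ and $\mf k_0$ commute---derivations annihilate the units $e$ and $f$, so $\rol(\mf k_0)=\pil(\Der(D_\alpha))$ commutes with $\rol(\mf s)=\pil(\{e,f,h\})$---the two invariances combine into a genuine decomposition under the joint $(\mf s,\mf k_0)$-action. The only step demanding any care is verifying that the $\mf s$-action closes on the odd generators of $H_\lambda$, i.e.\ the reductions modulo $\Ila$; but these are the same elementary Bessel computations already underlying Theorem~\ref{ThDecF}, so I expect no real obstacle.
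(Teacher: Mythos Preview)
Your proposal is correct and follows the same route as the paper. In fact the paper offers no separate proof: it simply displays the $\rol(\tilde f),\rol(\tilde h),\rol(\tilde e)$-computations on $({z_1}+{z_2})^k$ and on the even generator of $H_{\lambda,k}$, remarks that $G_\lambda$ and $H_\lambda$ are then clearly $\mf s$-invariant, notes that $\mf s$ and $\mf k_0$ commute, and declares the theorem. Your write-up supplies the details the paper omits (the $\mf k_0$-triviality of $G_\lambda$, the closure of $\rol(\tilde f)$ and $\rol(\tilde e)$ on the odd generators via $z_2z_3\equiv z_2z_4\equiv 0$ in $\Ial$, and the reason $[\Der(D_\alpha),\{e,f,h\}]=0$), all of which check out.
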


\subsection{The Gelfand-Kirillov dimension}

The Gelfand-Kirillov dimension is a measure of the size of a representation that roughly measures how fast a representation grows to infinity. In particular, the  Gelfand-Kirillov dimension is zero for finite-dimensional representations. Minimal representations have the property that they attain the lowest possible Gelfand-Kirillov dimension of all infinite-dimensional representations \cite{GanSavin}. Since we will show that the Gelfand-Kirillov dimension of our representation is $1$, the Fock model also has the lowest possible Gelfand-Kirillov dimension. 

 Let $R$ be a finitely generated algebra, then the Gelfand-Kirillov dimension of a finitely generated $R$-module $F$ is defined by
\begin{align*}
GK(F) = \lim\sup_{k\rightarrow \infty} \left(\log_k\dim(V^kF_0)\right).
\end{align*}
Here $V$ is a finite-dimensional subspace of $R$ which contains the unit element $1$ and generators of $R$, and $F_0$ is a finite-dimensional subspace of $F$, which generates $F$ as an $R$-module. The definition is independent of the chosen $V$ and $F_0$, see \cite[Section 7.3]{Mu}.

\begin{Prop} \label{Prop Gelfand-Kirillov dimension}
Suppose either $\lambda=\alpha$ and $\alpha\not\in \N$ or $\lambda=1$ and $\alpha^{-1}\not\in \N$. The Gelfand-Kirillov dimension of the $U(\mf g)$-module $F_\lambda$ is given by $GK(F_\lambda) = 1$.
\end{Prop}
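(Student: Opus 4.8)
The plan is to compute $GK(F_\lambda)$ directly from the definition by making an economical choice of the generating subspaces and then sandwiching $\dim(V^k F_0)$ between two linear functions of $k$. By Theorem \ref{ThDecF}(3) we have $F_\lambda = U(\mf g)\cdot 1$, so the one-dimensional space $F_0 := F_{\lambda,0} = \C$ generates $F_\lambda$ as a $U(\mf g)$-module, and for $V$ I take $V := \C 1 + \mf g \subseteq U(\mf g)$, the span of the unit together with the degree-one generators of the enveloping algebra. Recall from Theorem \ref{ThDecF} that $\dim F_{\lambda,0} = 1$ while $\dim F_{\lambda,k} = 4$ for every $k \geq 1$, the latter four-dimensionality being guaranteed by the non-degeneracy in Corollary \ref{nondeg}.

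The decisive structural observation, which yields the upper bound, is that each generator shifts the $\N$-grading $F_\lambda = \bigoplus_{k}F_{\lambda,k}$ by at most one. Inspecting the explicit formulas for $\rol$ in Section \ref{Section Cayley}, every operator $\rol(X)$ with $X \in \mf g$ is a linear combination of three types of terms: multiplication operators $z_i$, which raise the degree by one; Bessel operators $\bessel(z_i)$, which lower the degree by one (as recorded in the proof of Proposition \ref{PropOrthog}); and first-order operators of Euler type such as $z_i\pt {z_j}$, which preserve the degree. Hence $\rol(X)\, F_{\lambda,k} \subseteq F_{\lambda,k-1}\oplus F_{\lambda,k}\oplus F_{\lambda,k+1}$ for all $X \in \mf g$, and by induction $V^k F_0 \subseteq \bigoplus_{j=0}^{k}F_{\lambda,j}$. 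This gives
\[
\dim(V^k F_0) \;\leq\; \sum_{j=0}^{k}\dim F_{\lambda,j} \;=\; 1 + 4k.
\]

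For the matching lower bound I would use the raising operator $\rho^+ := \imath z_1$ from the proof of Theorem \ref{ThDecF}, which is realised by an element of $V$. Since $(\rho^+)^j 1 = \imath^j z_1^j$ and $z_1^j \neq 0$ in $\Fock$ for every $j$ --- because $\bfip{z_1^j, z_1^j} = j!\,(-\alpha)_j \neq 0$ whenever $\alpha \notin \N$ by Proposition \ref{PropInnProd}, with the analogous statement in $z_2$ for $\lambda = 1$ --- the $k+1$ elements $1, z_1, z_1^2, \dots, z_1^k$ all lie in $V^k F_0$. Occupying distinct graded components, they are linearly independent, so $\dim(V^k F_0) \geq k+1$.

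Combining the two estimates gives $k+1 \leq \dim(V^k F_0) \leq 4k+1$, whence
\[
\frac{\log(k+1)}{\log k} \;\leq\; \log_k \dim(V^k F_0) \;\leq\; \frac{\log(4k+1)}{\log k}.
\]
Both outer quantities tend to $1$ as $k \to \infty$, so $\limsup_{k\to\infty}\log_k\dim(V^k F_0) = 1$ and therefore $GK(F_\lambda) = 1$. I anticipate no serious obstacle: the entire argument rests on the grading-shift bound, which is a direct reading of the three kinds of terms appearing in $\rol$, together with the non-vanishing of the monomials $z_1^j$ (resp.\ $z_2^j$) already supplied by Proposition \ref{PropInnProd}; the remainder is the routine squeeze showing that a graded module with eventually constant finite-dimensional pieces has Gelfand--Kirillov dimension one.
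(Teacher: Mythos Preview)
Your proof is correct and follows essentially the same approach as the paper: the same choice of $F_0 = F_{\lambda,0}$ and $V = \C 1 + \mf g$, together with the observation that every $\rol(X)$ shifts the $\N$-grading by at most one. The paper simply asserts the exact equality $\dim(V^k F_0) = 1+4k$ and takes the $\limsup$, whereas you spell out the degree-shift upper bound and supply a weaker but sufficient lower bound $k+1$ via the raising operator $\rho^+$; both routes yield $GK(F_\lambda)=1$.
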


\begin{proof}
We choose $F_{\lambda,0}$ for $F_0$ and $\mf g \oplus 1\subset U(\mf g)$ for $V$. Then $V^k = U_k(\mf g)$ is the canonical filtration on the universal enveloping algebra.
We have
\begin{align*}
\dim\left(U_k(\mf g) F_{\lambda,0}\right) &= \dim\left(\bigoplus_{j=0}^k F_{\lambda,j}\right) = \sum_{j=0}^k \dim(F_{\lambda,k}) = 1 + \sum_{j=1}^k 4 = 1+4k
\end{align*}
and therefore
\begin{align*}
GK(F_\lambda) = \lim\sup_{k\rightarrow \infty} \left(\log_k(1+4k)\right) = 1,
\end{align*}
which is what we wished to prove.
\end{proof}

\section{The Segal-Bargmann transform}
\label{Section SB transform}
In this section we construct the Segal-Bargmann transform as an operator that intertwines the Schrodinger representation and the Fock representation. We start by looking at an intertwining operator between both representations that occurs naturally when we restrict the Fock representation to $\R$. This intertwining operator can then be used in combination with the reproducing kernel from Section \ref{SSRepKer} to define the Segal-Bargmann transform. We then use the Segal-Bargmann transform to show that we have an explicit $\mf{k}$-finite decomposition of the Schr\"odinger representation $W_\lambda$.

\subsection{An intertwining operator}\label{SSIntOp}
\label{Subsection intertwiner}

\begingroup
\allowdisplaybreaks
We denote the Fock space $F_\lambda$ restricted to $\R$ by $F_{\lambda,\R}$ and the Fock representation $\rol$ restricted to $F_{\lambda,\R}$ by $\rho_{\lambda,\R}$. Define $C := \exp(\frac{\imath}{2}f)\exp(\imath e)$, with $e$ and $f$ as in Section \ref{Section Cayley}. Then the Cayley transform can be rewritten as
\begin{align*}
c &= \exp(\frac{\imath}{2}\ad(f))\exp(\imath\ad(e)) = \Ad(\exp(\frac{\imath}{2}f))\Ad(\exp(\imath e)) = \Ad(C),
\end{align*}
and therefore, we formally have,
\begin{align}\label{Eq Int}
\rho_{\lambda,\R}(X) &= \pil(c(X)) = \pil(\Ad(C)X) = \pil(C)\pil(X)\pil(C)^{-1}.
\end{align}

Equation (\ref{Eq Int}) shows that $\pil(C)$ intertwines the actions of $\pil$ and $\rho_{\lambda,\R}$. We wish to show that the space $W_\lambda$, defined in Section \ref{ssSchrod}, corresponds to $F_{\lambda, \R}$ under the transformation $\pil(C)$, i.e., we wish to show $\pil(C)^{-1}(F_{\lambda,\R}) = W_\lambda$. Let $x_1,x_2$ be the even representatives of the coordinates on $F_{\lambda,\R}$. Note that we have $\pil(C)^{-1} = \pil(C^{-1}) = \exp(\pil(-\imath e))\exp(\pi(-\dfrac{\imath}{2}f))$ with
\begin{align*}
\pil(-\dfrac{\imath}{2}f) &= -(x_1+x_2) \\
\pil(-\imath e) &= -\dfrac{1}{2}(\bessel(x_1)+\bessel(x_2))
\end{align*}
and recall that we are working modulo $\mc I_\lambda$. For $\lambda= \alpha$ we find
\begin{align*}
\iSB(1) &= \exp(-\dfrac{1}{2}(\bessel(x_1)+\bessel(x_2)))\exp(-x_1-x_2)\\
&= \exp(-\dfrac{1}{2}(-\alpha \pt {x_1} + x_1 \pt {x_1} ^2))\exp(-x_1)(1+\dfrac{1}{2}\pt {x_2})(1-x_2)\\
&= (\dfrac{1}{2}-x_2)\exp(-\dfrac{1}{2}(-\alpha  + \E)\pt {x_1})\exp(-x_1)\\
&=(\dfrac{1}{2}-x_2)\sum_{l=0}^\infty\sum_{j=0}^l \dfrac{(-1)^{l}}{j!(l-j)!2^j}(\alpha-l+1)_j {x_1}^{l-j}\\
&=(\dfrac{1}{2}-x_2)\sum_{i=0}^\infty (-1)^i \dfrac{\Gamma(\alpha-i+1)}{i!}\left( \sum_{j=0}^\infty \dfrac{(-1)^{j}}{2^{j}j! \Gamma(\alpha-i-j+1)}\right) x_1^{i}\\
&= (\dfrac{1}{2}-x_2)\sum_{i=0}^\infty (-1)^i \dfrac{\Gamma(\alpha-i+1)}{i!}\dfrac{2^{i-\alpha}}{\Gamma(\alpha-i+1)}x_1^i\\
&= 2^{-\alpha}(\dfrac{1}{2}-x_2)\exp(-2x_1)\\
&= \dfrac{1}{2^{1+\alpha}}\exp(-2(x_1+x_2)).
\end{align*}
Similarly, for $\lambda = 1$ we have \[\iSB(1) = \dfrac{1}{2^{1+\alpha^{-1}}}\exp(-2(x_1+x_2)).\]

We can now describe $W_\lambda$ explicitly using $\iSB$. If we calculate $\iSB(x_1^k)$ for $\lambda=\alpha$ we get
\begin{align*}
\iSB(x_1^k) &= \exp(-\dfrac{1}{2}(\bessel(x_1)+\bessel(x_2)))\exp(-x_1-x_2)x_1^k\\
&= (\dfrac{1}{2}-x_2)\exp(-\dfrac{1}{2}(-\alpha  + \E)\pt {x_1})\exp(-x_1)x_1^k\\
&= \dfrac{1}{2}\exp(-2x_2)\exp(-\dfrac{1}{2}(-\alpha  + \E)\pt {x_1})\exp(-x_1)x_1^k.
\end{align*}
In order to calculate this further we need the following lemma.

\begin{lemma}\label{LemSum}
We have
\begin{align*}
&((-\alpha+\ds E)\pt x)^j \left( \exp(-x)x^k  \right)\\
&\quad = \sum_{l=0}^\infty\sum_{i=0}^j\binom{i}{j}(-1)^{j-i}(-\alpha+l+2k-i)_{j-i}\dfrac{(-x)^l}{l!}((-\alpha+\E)\pt x)^ix^k,
\end{align*}
for all $x\in \R$ and $k,j\in \N$.
\end{lemma}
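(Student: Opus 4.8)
The plan is to prove the identity by induction on $j$, but with the essential preliminary decision to work entirely with coefficients of monomials in $x$ rather than with the summands $\frac{(-x)^l}{l!}\big((-\alpha+\E)\pt x\big)^i x^k$ themselves. The reason, which I regard as the conceptual heart of the lemma, is that these summands are not linearly independent: the $(l,i)$-term is a scalar multiple of $x^{\,l+k-i}$, so infinitely many pairs $(l,i)$ contribute to a single power of $x$, the right-hand side is a highly non-unique representation of its value, and a direct term-by-term comparison after applying $(-\alpha+\E)\pt x$ simply does not reproduce the displayed coefficients (already the coefficient of $x^{k+1}$ at level $j=1$ comes out with $2k$ in a place where the naive manipulation produces only $k$). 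I would therefore begin by recording the one structural fact that renders monomial bookkeeping trivial: writing $D:=(-\alpha+\E)\pt x$, where $\E=x\pt x$ is the Euler operator in the single variable $x$, one has
\begin{align*}
D\,x^n=n(n-1-\alpha)\,x^{n-1},
\end{align*}
so $D$ is diagonal on monomials, and for any series $F=\sum_M a_M x^M$ one gets $[x^m](DF)=(m+1)(m-\alpha)\,a_{m+1}$.

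For the base case $j=0$ the right-hand side collapses to $\sum_{l\ge0}\frac{(-x)^l}{l!}x^k=\exp(-x)x^k$ (only $i=0$ survives, with the empty Pochhammer symbol equal to $1$), matching the left-hand side. Denoting by $R_j$ the claimed right-hand side at level $j$, the induction hypothesis gives $D^{j+1}(\exp(-x)x^k)=D R_j$, so the step reduces to $D R_j=R_{j+1}$, and by the coefficient formula for $D$ this is equivalent to the scalar recursion
\begin{align*}
[x^m]R_{j+1}=(m+1)(m-\alpha)\,[x^{m+1}]R_j,\qquad m\in\N.
\end{align*}
To evaluate the two sides I would substitute $\big((-\alpha+\E)\pt x\big)^i x^k=\frac{k!}{(k-i)!}(k-i-\alpha)_i\,x^{k-i}$ and reparametrise the inner summation by $l=m-k+i$; the welcome simplification is that the Pochhammer base $(-\alpha+l+2k-i)$ then becomes $(-\alpha+m+k)$, independent of $i$, so that $[x^m]R_j$ is a single finite sum over $i$.

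The inductive step has now become a terminating hypergeometric summation identity in $(m,j,k,\alpha)$, and I expect this to be the main obstacle; it should fall to Pascal's rule $\binom{j+1}{i}=\binom{j}{i}+\binom{j}{i-1}$ together with the Pochhammer recurrences $(a)_{r+1}=a\,(a+1)_r=(a+r)(a)_r$. A clean independent check that I would carry out in parallel is to compute both sides in closed form: expanding $\exp(-x)x^k=\sum_{l\ge0}\frac{(-1)^l}{l!}x^{l+k}$ and applying the diagonal action of $D^j$ gives
\begin{align*}
[x^m]\big(D^j(\exp(-x)x^k)\big)=\frac{(-1)^{m+j-k}}{(m+j-k)!}\,(m+1)_j\,(m-\alpha)_j,
\end{align*}
so it is enough to show that the double sum reproduces precisely this coefficient, which is the same identity with both sides already in lowest terms.

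Finally I would dispose of two bookkeeping points. Intermediate steps that invert $\pt x$ against $D$ introduce factors such as $(-\alpha+k-i-1)$ in denominators; since every quantity occurring is polynomial in $\alpha$, it suffices to argue for $\alpha$ outside a discrete set and extend by polynomiality, so these apparent poles are harmless. And the binomial coefficient in the statement is to be read as $\binom{j}{i}$ (the number of ways the $j$ applications of $D$ distribute between the two factors), as is already forced by the $j=1$ case.
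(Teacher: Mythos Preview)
Your proposal is correct but follows a genuinely different route from the paper's proof.

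The paper proves the lemma by direct induction on $j$: it applies $(-\alpha+\E)\pt x$ to the right-hand side $R_j$ via the Leibniz rule, splitting $\pt x$ between the factor $\frac{(-x)^l}{l!}$ and the factor $((-\alpha+\E)\pt x)^i x^k$, then uses the commutation $(-\alpha+\E)x^l = x^l(-\alpha+l+\E)$ to push the Euler operator through, reindexes, and finally recombines the three resulting sums into $R_{j+1}$ using Pascal's rule and the Pochhammer recursion $(a)_{r+1}=(a)(a+1)_r$. No coefficient extraction is performed; the manipulation is purely symbolic at the level of the double sum. Your approach instead extracts the coefficient of each $x^m$, exploiting the diagonal action $Dx^n=n(n-1-\alpha)x^{n-1}$ of $D=(-\alpha+\E)\pt x$ on monomials to produce the closed form $[x^m]D^j(\exp(-x)x^k)=\tfrac{(-1)^{m+j-k}}{(m+j-k)!}(m+1)_j(m-\alpha)_j$, and observes that on the right-hand side the Pochhammer base collapses to $(-\alpha+m+k)$ independently of $i$, leaving a single terminating hypergeometric sum. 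The paper's argument is self-contained but computationally heavy; your reduction is cleaner and yields a closed form that would in fact streamline the subsequent evaluation of $\exp(-\tfrac12 D)\exp(-x)x^k$. One correction: your claim that a direct term-by-term comparison ``simply does not reproduce the displayed coefficients'' is too strong---the paper shows precisely that it does, provided one applies Leibniz carefully rather than naively expecting $D$ to carry the $(l,i)$-term to an $(l,i{+}1)$-term. Your reading of the binomial coefficient as $\binom{j}{i}$ is correct.
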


\begin{proof}
We will use induction on $j$. The cases $j=0$  and $j=1$ are obtained by a straightforward verification. Now suppose we have proven the induction hypothesis for $j\in \ds N$. We find
\begin{align*}
&((-\alpha+\ds E)\pt x)^{j+1} \exp(-x)x^k\\
& = (-\alpha+\ds E)\pt x\\
&\quad \left(\sum_{i=0}^j\sum_{l=0}^\infty\binom{i}{j}(-1)^{j-i}(-\alpha+l+2k-i)_{j-i}\dfrac{(-x)^l}{l!}((-\alpha+\E)\pt x)^ix^k\right)\\
& = (-\alpha+\ds E)\\
&\quad \left(\sum_{i=0}^j\binom{i}{j}\left(\sum_{l=0}^\infty(-1)^{j-i+1}(-\alpha+l+2k-i)_{j-i}\dfrac{(-x)^{l-1}}{(l-1)!}\right)((-\alpha+\E)\pt x)^ix^k\right)\\
&\quad + (-\alpha+\ds E)\\
&\quad \left(\sum_{i=0}^j\binom{i}{j}\left(\sum_{l=0}^\infty(-1)^{j-i}(-\alpha+l+2k-i)_{j-i}\dfrac{(-x)^l}{l!}\right)\pt x((-\alpha+\E)\pt x)^ix^k\right)\\
& = \sum_{i=0}^j\binom{i}{j}\left(\sum_{l=1}^\infty(-1)^{j-i+1}(-\alpha+l+k-i-1)(-\alpha+l+2k-i)_{j-i}\dfrac{(-x)^{l-1}}{(l-1)!}\right)\\
&\quad \times ((-\alpha+\E)\pt x)^ix^k\\
&\quad + \sum_{i=0}^j\binom{i}{j}\left(\sum_{l=0}^\infty l(-1)^{j-i}(-\alpha+l+2k-i)_{j-i}\dfrac{(-x)^l}{l!}\right)\pt x((-\alpha+\E)\pt x)^ix^k\\
&\quad + \sum_{i=0}^j\binom{i}{j}\left(\sum_{l=0}^\infty(-1)^{j-i}(-\alpha+l+2k-i)_{j-i}\dfrac{(-x)^l}{l!}\right)\\
&\quad \times(-\alpha+\ds E)\pt x((-\alpha+\E)\pt x)^ix^k\\
& = \sum_{i=0}^j\binom{i}{j}\left(\sum_{l=1}^\infty(-1)^{j-i+1}(-\alpha+l+k-i-1)(-\alpha+l+2k-i)_{j-i}\dfrac{(-x)^{l-1}}{(l-1)!}\right)\\
&\quad \times ((-\alpha+\E)\pt x)^ix^k\\
&\quad + \sum_{i=0}^j\binom{i}{j}\left(\sum_{l=1}^\infty(-1)^{j-i+1}(-\alpha+l+2k-i)_{j-i}\dfrac{(-x)^{l-1}}{(l-1)!}\right)\\
&\quad \times x\pt x((-\alpha+\E)\pt x)^ix^k\\
&\quad + \sum_{i=0}^j\binom{i}{j}\left(\sum_{l=0}^\infty(-1)^{j-i}(-\alpha+l+2k-i)_{j-i}\dfrac{(-x)^l}{l!}\right)\\
&\quad \times (-\alpha+\ds E)\pt x((-\alpha+\E)\pt x)^ix^k\\
\end{align*}
For the second sum we have
\begin{align*}
&\sum_{i=0}^j\binom{i}{j}\left(\sum_{l=0}^\infty l(-1)^{j-i}(-\alpha+l+2k-i)_{j-i}\dfrac{(-x)^l}{l!}\right)\pt x((-\alpha+\E)\pt x)^ix^k\\
&= \sum_{i=0}^j\binom{i}{j}\left(\sum_{l=1}^\infty(-1)^{j-i+1}(-\alpha+l+2k-i)_{j-i}\dfrac{(-x)^{l-1}}{(l-1)!}\right)x\pt x((-\alpha+\E)\pt x)^ix^k\\
&= \sum_{i=0}^j\binom{i}{j}\left(\sum_{l=1}^\infty(-1)^{j-i+1}(-\alpha+l+2k-i)_{j-i}\dfrac{(-x)^{l-1}}{(l-1)!}\right)(k-i)((-\alpha+\E)\pt x)^ix^k\\
\end{align*}
We can now rearrange the first and second sum to obtain
\begin{align*}
&((-\alpha+\ds E)\pt x)^{j+1} \exp(-x)x^k\\
& = \sum_{i=0}^j\binom{i}{j}\left(\sum_{l=1}^\infty(-1)^{j-i+1}(-\alpha+l+2k-i-1)(-\alpha+l+2k-i)_{j-i}\dfrac{(-x)^{l-1}}{(l-1)!}\right)\\
&\quad \times((-\alpha+\E)\pt x)^ix^k\\
&\quad + \sum_{i=1}^j\binom{i}{j}\left(\sum_{l=1}^\infty(-1)^{j+1-i}(-i)(-\alpha+l+2k-i)_{j-i}\dfrac{(-x)^{l-1}}{(l-1)!}\right)((-\alpha+\E)\pt x)^ix^k\\
&\quad + \sum_{i=0}^j\binom{i}{j}\left(\sum_{l=0}^\infty(-1)^{j-i}(-\alpha+l+2k-i)_{j-i}\dfrac{(-x)^{l}}{l!}\right)((-\alpha+\E)\pt x)^{i+1}x^k.
\end{align*}
We use the substitution $l\mapsto l+1$ in the first and second sum and the substitution $i\mapsto i-1$ in the third sum to find
\begin{align*}
&((-\alpha+\ds E)\pt x)^{j+1} \exp(-x)x^k\\
& = \sum_{i=0}^j\binom{i}{j}\left(\sum_{l=0}^\infty(-1)^{j-i+1}(-\alpha+l+2k-i)_{j+1-i}\dfrac{(-x)^{l}}{l!}\right)((-\alpha+\E)\pt x)^ix^k\\
&\quad + \sum_{i=1}^j\binom{i}{j}\left(\sum_{l=0}^\infty(-1)^{j+1-i}(-i)(-\alpha+l+1+2k-i)_{j-i}\dfrac{(-x)^{l}}{l!}\right)((-\alpha+\E)\pt x)^ix^k\\
&\quad + \sum_{i=1}^{j+1}\binom{i-1}{j}\left(\sum_{l=0}^\infty(-1)^{j-i+1}(-\alpha+l+2k-i+1)_{j-i+1}\dfrac{(-x)^{l}}{l!}\right)((-\alpha+\E)\pt x)^{i}x^k.
\end{align*}
If we combine the second and third sum we get
\begin{align*}
&((-\alpha+\ds E)\pt x)^{j+1} \exp(-x)x^k\\
& = \sum_{i=0}^j\binom{i}{j}\left(\sum_{l=0}^\infty(-1)^{j-i+1}(-\alpha+l+2k-i)_{j+1-i}\dfrac{(-x)^{l}}{l!}\right)((-\alpha+\E)\pt x)^ix^k\\
&\quad + \sum_{i=1}^j\sum_{l=0}^\infty(-1)^{j+1-i}(-\alpha+l+1+2k-i)_{j-i}\\
&\quad \times\left(\binom{i}{j}(-i)+\binom{i-1}{j}(-\alpha+l+2k+j-2i+1)\right)\dfrac{(-x)^{l}}{l!}((-\alpha+\E)\pt x)^ix^k\\
&\quad +\sum_{l=0}^\infty \dfrac{(-x)^l}{l!}\left((-\alpha+\E)\pt x\right)^{j+1}x,
\end{align*}
which becomes
\begin{align*}
&((-\alpha+\ds E)\pt x)^{j+1} \exp(-x)x^k\\
& = \sum_{i=0}^j\binom{i}{j}\left(\sum_{l=0}^\infty(-1)^{j-i+1}(-\alpha+l+2k-i)_{j+1-i}\dfrac{(-x)^{l}}{l!}\right)((-\alpha+\E)\pt x)^ix^k\\
&\quad + \sum_{i=1}^j\sum_{l=0}^\infty(-1)^{j+1-i}(-\alpha+l+1+2k-i)_{j-i}\\
&\quad \times\left(\binom{i-1}{j}(-\alpha+l+2k-i)\right)\dfrac{(-x)^{l}}{l!}((-\alpha+\E)\pt x)^ix^k\\
&\quad +\sum_{l=0}^\infty \dfrac{(-x)^l}{l!}\left((-\alpha+\E)\pt x\right)^{j+1}x^k\\
& = \sum_{i=0}^{j+1}\sum_{l=0}^\infty\binom{i}{j+1}(-1)^{j+1-i}(-\alpha+l+2k-i)_{j+1-i}\dfrac{(-x)^l}{l!}((-\alpha+\E)\pt x)^ix^k,
\end{align*}
as we wished to prove.
\end{proof}

Using this lemma we find
\begin{align*}
&\exp(-\dfrac{1}{2}(-\alpha  + \E)\pt {x})\exp(-x)x^k\\
&= \sum_{j=0}^\infty \dfrac{(-1)^j}{2^j j!}((-\alpha+\E)\pt x)^j(\exp(-x)x^k) \\
&= \sum_{j=0}^\infty \sum_{l=0}^\infty\sum_{i=0}^j\dfrac{(-1)^i}{2^j}\dfrac{1}{i!(j-i)!}(-\alpha+l+2k-i)_{j-i}\dfrac{(-x)^l}{l!}((-\alpha+\E)\pt x)^ix^k\\
&= \sum_{i=0}^\infty \sum_{j=i}^{\infty}\sum_{l=0}^\infty\dfrac{(-1)^i}{2^j}\dfrac{1}{i!(j-i)!}(-\alpha+l+2k-i)_{j-i}\dfrac{(-x)^l}{l!}((-\alpha+\E)\pt x)^ix^k\\
&= \sum_{i=0}^k \sum_{j=0}^{\infty}\sum_{l=0}^\infty\dfrac{(-1)^i}{2^{j+i}}\dfrac{1}{i!j!}(-\alpha+l+2k-i)_{j}\dfrac{(-x)^l}{l!}((-\alpha+\E)\pt x)^ix^k\\
&= \sum_{i=0}^k \sum_{l=0}^\infty\dfrac{(-1)^i}{2^{i}}\dfrac{1}{i!}2^{-\alpha+l+2k-i}\dfrac{(-x)^l}{l!}((-\alpha+\E)\pt x)^ix^k\\
&= \dfrac{1}{2^\alpha}\exp(-2x)\sum_{i=0}^k\dfrac{(-1)^i}{4^ii!}((-\alpha+\E)\pt x)^i(4x)^k\\
&= \dfrac{1}{2^\alpha}\exp(-2x)\sum_{i=0}^k \dfrac{(-1)^{i}}{i!}(\alpha-k+1)_i(-k)_i (4x)^{k-i}.
\end{align*}

If we now go back to what we originally wished to calculate, we find

\begin{align*}
\iSB(x_1^k) &= \dfrac{1}{2^{1+\alpha}}\exp(-2(x_1+x_2))\sum_{i=0}^k \dfrac{(-1)^{i}}{i!}(\alpha-k+1)_i(-k)_i (4x_1)^{k-i}.
\end{align*}

\subsection{The function $\KO_{\lambda,k}(x)$}

For $\lambda\in\{1,\alpha\}$ we define the polynomial $\KO_{\lambda,k}(x)$ by
\begin{align*}
\KO_{\alpha,k}(x) &:= \sum_{i=0}^k \dfrac{(-1)^{i}}{i!}(\alpha-k+1)_i(-k)_i (4x)^{k-i},\\
\KO_{1,k}(x) &:=  \sum_{i=0}^k \dfrac{(-1)^{i}}{i!}(\alpha^{-1}-k+1)_i(-k)_i (4x)^{k-i}.
\end{align*}
Then for $\lambda=\alpha$ the calculations in Section \ref{Subsection intertwiner} show
\begin{align*}
\iSB(x_1^k) &= \dfrac{1}{2^{1+\alpha}}\exp(-2(x_1+x_2))\KO_{\alpha,k}(x_1)
\end{align*}
and using similar calculations for $\lambda=1$ we find
\begin{align*}
\iSB(x_2^k) &= \dfrac{1}{2^{1+\alpha^{-1}}}\exp(-2(x_1+x_2))\KO_{1,k}(x_2).
\end{align*}

Note that the function $\KO_{\lambda,k}(x)$ can be given by the confluent hypergeometric function of the second kind, also known as Tricomi's confluent hypergeometric function and Kummer's function of the second kind (see, e.g., \cite[Chapter 13]{AS}). For $a,b,c\in \C$, we define the confluent hypergeometric function of the second kind $\KU(a,b,c)$ as
\begin{align*}
\KU(a,b,c)  := c^{-a}\HypFdegen(a,1+a-b, -c^{-1}),
\end{align*}
where
\begin{align*}
\HypFpq (a_1,\ldots, a_p, b_1, \ldots, b_q,c) := \sum_{i=0}^\infty \dfrac{(a_1)_i\cdots (a_p)_i}{(b_1)_i\cdots (b_q)_i}\dfrac{c^i}{i!}
\end{align*}
denotes the generalized hypergeometric function. The function $\KU(a,b,c)$ is a solution of
\begin{align} \label{EqDiffKummer}
c\pt {c}^2 u +(b-c) \pt {c} u - au = 0,
\end{align}
which is known as Kummer's differential equation.

\begin{Prop}\label{EqKummer}
We have
\begin{align*}
\KO_{\alpha,k}(x_1) = \KU(-k,-\alpha,4x_1),  \quad \text{ and } \quad \KO_{1,k}(x_2) = \KU(-k,-\alpha^{-1},4x_2),
\end{align*}
for all $k\in\N$.
\end{Prop}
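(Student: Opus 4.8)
The plan is to verify the identity by unfolding the definition of the confluent hypergeometric function of the second kind and matching it term by term with the explicit polynomial $\KO_{\alpha,k}$. First I would substitute $a=-k$, $b=-\alpha$ and $c=4x_1$ into the defining relation $\KU(a,b,c)=c^{-a}\HypFdegen(a,1+a-b,-c^{-1})$, which, using $1+a-b = 1-k+\alpha = \alpha-k+1$, yields
\begin{align*}
\KU(-k,-\alpha,4x_1) = (4x_1)^{k}\sum_{i=0}^\infty \frac{(-k)_i(\alpha-k+1)_i}{i!}\left(-(4x_1)^{-1}\right)^i.
\end{align*}

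The key observation, which simultaneously settles any question about convergence of the a priori only formal (asymptotic) series $\HypFdegen$, is that the Pochhammer symbol $(-k)_i$ acquires the factor $(-k+i-1)=0$ as soon as $i>k$, and hence vanishes for all such $i$. Thus the sum terminates at $i=k$, so that $\KU(-k,-\alpha,4x_1)$ is a genuine polynomial of degree $k$ in $x_1$. Distributing the factor $(4x_1)^{k}$ across the finite sum and absorbing the signs $(-1)^i$ I obtain
\begin{align*}
\KU(-k,-\alpha,4x_1) = \sum_{i=0}^k \frac{(-1)^i}{i!}(\alpha-k+1)_i(-k)_i (4x_1)^{k-i},
\end{align*}
which is precisely the definition of $\KO_{\alpha,k}(x_1)$.

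The $\lambda=1$ statement follows from the identical computation upon replacing $\alpha$ by $\alpha^{-1}$ everywhere, since $\KO_{1,k}$ is obtained from $\KO_{\alpha,k}$ by this substitution and $\KU(-k,-\alpha^{-1},4x_2)$ from $\KU(-k,-\alpha,4x_1)$ by the same. I do not anticipate any genuine obstacle: the argument is purely a bookkeeping match of Pochhammer symbols, and the only point deserving a word of care is the termination of the $\HypFdegen$ series at the negative integer parameter $a=-k$, which turns the formal series into a bona fide polynomial and thereby legitimises the rearrangement.
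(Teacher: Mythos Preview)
Your proof is correct: it is precisely the direct verification the paper itself alludes to at the start of its proof (``We can easily prove this proposition by filling in the correct parameters in the definition of $\KU(a,b,c)$''). The only subtlety is the termination of the formal series $\HypFdegen$ at $i=k$, which you address explicitly.

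The paper, however, chooses a different route for its main argument. It uses the intertwining property of $\pil(C)^{-1}$ applied to the element $(f_1,0,-e_1)$ to derive a second-order ODE satisfied by $\KO_{\alpha,k}$, namely
\[
x_1\pt{x_1}^2\KO_{\alpha,k}(x_1)+(-\alpha-4x_1)\pt{x_1}\KO_{\alpha,k}(x_1)+4k\,\KO_{\alpha,k}(x_1)=0,
\]
recognises this as Kummer's equation (\ref{EqDiffKummer}) with $a=-k$, $b=-\alpha$, $c=4x_1$, and then identifies $\KO_{\alpha,k}$ with $\KU(-k,-\alpha,4x_1)$ by matching the initial conditions $\KO_{\alpha,k}(0)$ and $(\pt{x_1}\KO_{\alpha,k})(0)$. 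Your approach is shorter and entirely self-contained; the paper's approach is longer but yields the differential equation (\ref{EqDiff}) as a byproduct, which is then reused in Section~\ref{SecOmega} to simplify the recurrence relations for $\KO_{\alpha,k}$.
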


\begin{proof}
We can easily prove this proposition by filling in the correct parameters in the definition of $\KU(a,b,c)$. However, the proof below gives us a more insightful reason as to why this proposition holds.

We prove the $\lambda=\alpha$ case. The $\lambda=1$ is entirely analoguous. On the one hand we have 
\begin{align}\label{EqKO}
\iSB(x_1^k) = \dfrac{1}{2^{1+\alpha}}\exp(-2(x_1+x_2))\KO_{\alpha,k}(x_1) 
\end{align}
and since $\iSB$ intertwines $\pil$ and $\rho_{\lambda,\R}$, we also have  on the other hand 
\begin{align*}
\iSB(x_1^k) &= \dfrac{-\imath}{\alpha-2k}\iSB(\imath(\alpha-2\E)x_1^{k})\\
&= \dfrac{-\imath}{\alpha-2k}\iSB(\rho_{\lambda,\R}(f_1,0, -e_1)x_1^{k}) \\
&= \dfrac{-\imath}{\alpha-2k}\pil(f_1,0, -e_1)\iSB(x_1^{k})\\
&= \dfrac{-1}{2(\alpha-2k)}\left(4x_1-\bessel(x_1)\right)\dfrac{1}{2^{1+\alpha}}\exp(-2(x_1+x_2))\KO_{\alpha,k}(x_1).
\end{align*}
Therefore
\begin{align*}
2(2k-\alpha)\exp(-2x_1)\KO_{\alpha,k}(x_1) &= 4x_1 \exp(-2x_1)\KO_{\alpha,k}(x_1) +\alpha \pt {x_1} \exp(-2x_1)\KO_{\alpha,k}(x_1)\\
&\quad - x_1 \pt {x_1}^2\exp(-2x_1)\KO_{\alpha,k}(x_1),
\end{align*}
which is equivalent to
\begin{align*}
2(2k-\alpha)\KO_{\alpha,k}(x_1) &= 4x_1 \KO_{\alpha,k}(x_1) +\alpha  \pt {x_1}\KO_{\alpha,k}(x_1) -2\alpha \KO_{\alpha,k}(x_1) - 4 x_1 \KO_{\alpha,k}(x_1)\\
&\quad + 4 x_1 \pt {x_1}\KO_{\alpha,k}(x_1)- x_1 \pt {x_1}^2\KO_{\alpha,k}(x_1),
\end{align*}
which further simplifies to
\begin{align}\label{EqDiff}
x_1\pt {x_1}^2 \KO_{\alpha,k}(x_1) + (-\alpha - 4x_1)\pt {x_1}\KO_{\alpha,k}(x_1)+4k \KO_{\alpha,k}(x_1) =0.
\end{align}
This implies $\KO_{\alpha,k}(x_1)$ is a solution to equation (\ref{EqDiffKummer}) for $a = -k$, $b = -\alpha$ and $c=4x_1$. Solving this differential equation using the initial conditions 
\begin{align*}
\KO_{\alpha, k}(0) &=(-1)^k(-\alpha)_k,\\
(\pt {x_1} \KO_{\alpha, k})(0) &= 4k(-1)^k(1-\alpha)_{k-1},
\end{align*}
we obtain $\KO_{\alpha,k}(x_1) = \KU(-k,-\alpha,4x_1)$.
\end{proof}

For $\lambda\in\{1,\alpha\}$ we also define the polynomial $\KV_{\lambda,k}(x)$ by
\begin{align*}
\KV_{\alpha,k}(x) &:= \KU(-k,1-\alpha,4x),\\
\KV_{1,k}(x) &:=  \KU(-k,1-\alpha^{-1},4x).
\end{align*}
Then, for a polynomial of degree $k$ in $F_{\lambda,\R}$ we now have the following result.

\begin{theorem}\label{CorMon}
Let $p = (p_1 x_1 + p_2 x_2 + p_3 {x_3} + p_4 {x_4})x_i^{k-1}$ be a polynomial of degree $k\in \ds N$ in $F_{\lambda,\R}$ where $i=1$ if $\lambda=\alpha$ and $i=2$ if $\lambda=1$. We have
\begin{align*}
\iSB(p) &= \dfrac{1}{2^{1+\alpha}}p_1 \KO_{\alpha,k}(x_1)\exp(-2(x_1+x_2))\\
&\quad + \dfrac{1}{2^{1+\alpha}}p_2 \KO_{\alpha,k-1}(x_1)\exp(-2(x_1-x_2))\\
&\quad + \dfrac{1}{2^{\alpha-1}}(p_3 {x_3} + p_4 {x_4})\KV_{\alpha,k-1}(x_1)\exp(-2x_1).
\end{align*}
for $\lambda=\alpha$ and
\begin{align*}
\iSB(p) &= \dfrac{1}{2^{1+\alpha^{-1}}}p_2 \KO_{1,k}(x_2)\exp(-2(x_1+x_2))\\
&\quad + \dfrac{1}{2^{1+\alpha^{-1}}}p_1 \KO_{1,k-1}(x_2)\exp(-2(x_2-x_1))\\
&\quad + \dfrac{1}{2^{\alpha^{-1}-1}}(p_3 {x_3} + p_4 {x_4})\KV_{1,k-1}(x_2)\exp(-2x_2).
\end{align*}
for $\lambda=1$.
\end{theorem}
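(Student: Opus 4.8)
The plan is to reduce everything to the single-variable computation already carried out for $\iSB(x_1^k)$ in Section~\ref{Subsection intertwiner}. By $\C$-linearity and the description of degree-$k$ elements of $\Fock$ from Section~\ref{Section Fock space}, it suffices to evaluate $\iSB$ on the four monomials $x_1^k$, $x_1^{k-1}x_2$, $x_1^{k-1}x_3$, $x_1^{k-1}x_4$ in the case $\lambda=\alpha$; the case $\lambda=1$ then follows from the substitution $x_1\leftrightarrow x_2$, $\alpha\leftrightarrow\alpha^{-1}$ used throughout the paper. The monomial $x_1^k$ is precisely equation~(\ref{EqKO}). Recall that
\[
\iSB = \exp\!\big(-\tfrac12(\bessel(x_1)+\bessel(x_2))\big)\exp(-(x_1+x_2)),
\]
and since the Bessel operators supercommute this factors as $\exp(-\tfrac12\bessel(x_1))\exp(-\tfrac12\bessel(x_2))$ applied to $e^{-x_1}x_1^{\,\cdot}\cdot e^{-x_2}x_{\,\cdot}$.

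The key step I would isolate is that, modulo $\Ial$, the two-variable Bessel operators act on each odd sector as single-variable Bessel operators with a shifted parameter. On the even sector $f(x_1,x_2)$ one has $\bessel(x_1)=(-\alpha+\E_1)\pt{x_1}$ and $\bessel(x_2)=(-1+\E_2)\pt{x_2}$ with $\E_i=x_i\pt{x_i}$. On the sector $f(x_1,x_2)x_3$ a short computation gives $\bessel(x_1)(fx_3)=\big((-(\alpha-1)+\E_1)\pt{x_1}f\big)x_3$, so $\bessel(x_1)$ behaves as the one-variable operator attached to the parameter $\alpha-1$, while $\bessel(x_2)(fx_3)\in\Ial$ because $x_2x_3\in V_\alpha\subset\Ial$; the sector $f(x_1,x_2)x_4$ behaves identically. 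Feeding these reductions into the single-variable identity
\[
\exp\!\big(-\tfrac12(-\beta+\E)\pt{x}\big)e^{-x}x^m = 2^{-\beta}e^{-2x}\,\KU(-m,-\beta,4x)
\]
(established in Section~\ref{Subsection intertwiner}) produces the hypergeometric factors at once: the even monomials yield $\KO_{\alpha,k}$ and $\KO_{\alpha,k-1}$, whereas the odd monomials yield $\KU(-(k-1),-(\alpha-1),4x_1)=\KU(1-k,1-\alpha,4x_1)=\KV_{\alpha,k-1}(x_1)$ by the definition of $\KV$, with prefactor $2^{-(\alpha-1)}$ coming from the shifted parameter.

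It remains to reconcile the exponential prefactors, and this modulo-$\Ial$ bookkeeping is the main obstacle. The raw computation produces $e^{-2(x_1+x_2)}$ together with an extra $x_2$-factor for each monomial. For $x_1^{k-1}x_3$ and $x_1^{k-1}x_4$ the $x_2$-factor is only $e^{-x_2}$, and $e^{-x_2}x_3\equiv x_3$, $e^{-x_2}x_4\equiv x_4 \pmod{\Ial}$ since $x_2x_3,x_2x_4\in\Ial$, collapsing the prefactor to $e^{-2x_1}$. For $x_1^{k-1}x_2$ the $x_2$-factor is $\tfrac12 e^{-2x_2}\KU(-1,-1,4x_2)=\tfrac12 e^{-2x_2}(1+4x_2)$, and $x_2^2\in\Ial$ gives $(1+4x_2)\equiv e^{4x_2}$, so that $(1+4x_2)e^{-2(x_1+x_2)}\equiv e^{-2(x_1-x_2)}$. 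Collecting the four contributions with coefficients $2^{-(1+\alpha)}$, $2^{-(1+\alpha)}$, $2^{-(\alpha-1)}$, $2^{-(\alpha-1)}$ yields exactly the asserted formula. The manipulation of the infinite exponential series of Bessel operators is legitimate because $\Ial$ is a $\mf g$-submodule, so the Bessel operators preserve $\Ial$ and every reduction above holds modulo $\Ial$ with the series truncating. As an alternative route for the odd terms one may instead invoke the intertwining relation~(\ref{Eq Int}) with the $\mf k_0$-generators $4[L_{e_1},L_\xi]$ and $4[L_{e_1},L_\eta]$, which send $x_1^k$ to $-kx_1^{k-1}x_3$ and $-kx_1^{k-1}x_4$, combined with the differentiation formula $\tfrac{d}{dc}\KU(a,b,c)=-a\,\KU(a+1,b+1,c)$; this reproduces the same $\KV_{\alpha,k-1}$.
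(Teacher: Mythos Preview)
Your proof is correct and follows the same strategy as the paper: reduce by linearity to the four monomials, factor the operator using the supercommutativity of the Bessel operators, and invoke the single-variable identity from Section~\ref{Subsection intertwiner}, observing that on the odd sector $\bessel(x_1)$ acts with the parameter shifted by one (the paper phrases this via the commutation $[\E,x_3]=x_3$, you via a direct evaluation of $\bessel(x_1)(fx_3)$). Your write-up is more detailed than the paper's terse proof, which only sketches the $x_1^{k-1}x_3$ case and declares the rest ``easily derived''; your explicit treatment of the $x_1^{k-1}x_2$ case and the alternative intertwining argument via $4[L_{e_1},L_\xi]$, $4[L_{e_1},L_\eta]$ together with the derivative formula for $\KU$ are correct and welcome additions.
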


\begin{proof}
This follows directly from the calculations in Section \ref{SSIntOp} and calculations easily derived from them. For example, if $\lambda = \alpha$ we have $\exp(-x_2) x_3 \equiv x_3$  modulo $\mc I_\lambda$ and thus we find using the commutation relation $[\E,x_3]=x_3$ 
\begin{align*}
\iSB(x_1^{k-1}x_3) &= \exp(-\dfrac{1}{2}(-\alpha + \E)\pt {x_1})\exp(-x_1)x_1^{k-1}x_3\\
&= x_3\exp(-\dfrac{1}{2}(-\alpha +1 + \E)\pt {x_1})\exp(-x_1)x_1^{k-1}.
\end{align*}
Therefore, we obtain from the calculations below Lemma \ref{LemSum}
\begin{align*}
\iSB(x_1^{k-1}x_3) &= x_3\dfrac{1}{2^{\alpha-1}}\exp(-2x_1)\KU(-k+1,-\alpha+1,4x_1)\\
&= x_3\dfrac{1}{2^{\alpha-1}}\exp(-2x_1)\KV_{\alpha,k-1}(x_1),
\end{align*}
as expected.
\end{proof}

\endgroup
\subsection{The Segal-Bargmann transform}\label{Segal-Bargmann transform}

In order to construct an intertwining operator from $W_\lambda$ to $F_\lambda$, we modify $\SB$ using the reproducing kernel $\ds I_\lambda(z,w)$ given in Theorem \ref{Theorem repr kernel}.

\begin{Def}\label{Def SB}
 For $f\in W_\lambda$ the \textbf{Segal-Bargmann transform} is defined as
\begin{align*}
\rSB (f(x))(z) &:= \bfipx{\SB(f(x))(x), \ds I_\lambda(z,x)}\\
&= \bfipx{\exp(x_1+x_2)\exp(\dfrac{1}{2}(\bessel(x_1)+\bessel(x_2)))f,\ds I_\lambda(z,x)},
\end{align*}
where $\bfipx{\cdot\, , \cdot}$ denotes the Bessel-Fischer product in the variable $x$.
\end{Def}

For $p\in F_\lambda$ the inverse Segal-Bargmann transform is then given by
\begin{align*}
\irSB (p(z))(x) &= \iSB(\bfip{p, \ds I_\lambda(z,x)})(x)\\
&= \exp(-\dfrac{1}{2}(\bessel(x_1)+\bessel(x_2)))\exp(-x_1-x_2)\bfip{p,\ds I_\lambda(z,x)}.
\end{align*}
Because of Theorem \ref{Theorem repr kernel} and the way $\SB$ is defined we immediately have the intertwining property.

\begin{theorem}[Intertwining property]\label{ThIP}
Suppose either $\lambda=\alpha$ and $\alpha\not\in \N$ or $\lambda=1$ and $\alpha^{-1}\not\in \N$. The Segal-Bargmann transform intertwines the action $\pil$ on $W_\lambda$ with the action $\rol$ on $F_\lambda$, i.e.,
\begin{align*}
\rSB\circ\, \pil(X) = \rol(X)\circ \rSB,
\end{align*}
for all $X\in \mathfrak{g}$.
\end{theorem}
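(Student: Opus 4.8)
The plan is to build $\rSB$ out of two pieces, each of which is an intertwiner for its own reason. The backbone is equation (\ref{Eq Int}): since the Cayley transform is realised as $c=\Ad(C)$, a Lie superalgebra automorphism, the operators $\SB=\pil(C)$ and $\iSB=\pil(C)^{-1}$ intertwine the Schr\"odinger action $\pil$ on $W_\lambda$ with the restricted Fock action $\rho_{\lambda,\R}$ on $F_{\lambda,\R}$; explicitly $\SB\circ\pil(X)=\rho_{\lambda,\R}(X)\circ\SB$ and $\iSB\circ\rho_{\lambda,\R}(X)=\pil(X)\circ\iSB$ for all $X\in\mf g$, purely formally from $c$ preserving brackets. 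By Definition \ref{Def SB}, $\rSB$ is the composite of $\SB$ with the reproducing-kernel map $R(g)(z):=\bfipx{g(x),\ds I_\lambda(z,x)}$, so the whole theorem reduces to showing that $R$ intertwines $\rho_{\lambda,\R}$ (acting in $x$) with $\rol$ (acting in $z$).

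The cleanest way to see the kernel step is through the inverse transform $\irSB$. In $\irSB(p)(x)=\iSB\big(\bfip{p,\ds I_\lambda(z,x)}\big)(x)$ the inner pairing is taken in $z$, so the holomorphic variable of the kernel sits in the first slot and the reproducing property (Theorem \ref{Theorem repr kernel}) applies verbatim: $\bfip{p(z),\ds I_\lambda(z,x)}=p(x)$. Hence $\irSB(p)(x)=\iSB(p(x))$, i.e. $\irSB$ is just $\iSB$ composed with the relabelling $z\mapsto x$. Since $\rol(X)$ and $\rho_{\lambda,\R}(X)$ are literally the same operator written in the two variables, one has $(\rol(X)p)(x)=\rho_{\lambda,\R}(X)\big(p(x)\big)$, and combining this with the intertwining of $\iSB$ gives
\begin{align*}
\irSB(\rol(X)p)(x)=\iSB\big(\rho_{\lambda,\R}(X)\,p(x)\big)=\pil(X)\,\iSB(p(x))=\pil(X)\,\irSB(p)(x).
\end{align*}
Thus $\irSB\circ\rol(X)=\pil(X)\circ\irSB$; composing with the two-sided inverse $\rSB$ on both sides turns this into $\rSB\circ\pil(X)=\rol(X)\circ\rSB$, which is the assertion. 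As everything is linear in $X$, it is enough to run the computation on a homogeneous basis of $\mf g$.

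If one insists on arguing directly on $\rSB$ (matching the statement without detouring through the inverse), the route is to push $\rol(X)_z$ through the $x$-pairing and then use skew-supersymmetry (Proposition \ref{PropSkewSymRho}) to transfer $\rho_{\lambda,\R}(X)=\rol(X)_x$ onto the second slot; this reduces the claim to a symmetry of the kernel relating $\rol(X)_z\,\ds I_\lambda(z,x)$ and $\rol(X)_x\,\ds I_\lambda(z,x)$, which can then be forced by pairing against an arbitrary $p\in F_\lambda$ and invoking Theorem \ref{Theorem repr kernel}, Proposition \ref{PropSkewSymRho} and non-degeneracy (Corollary \ref{nondeg}). The hard part here will be the sign-and-conjugation bookkeeping rather than any new idea: the kernel $\ds I_\lambda(z,x)$ depends on $x$ only through $\overline x$ (via $z|\overline x$) and the Bessel-Fischer product is antilinear in its second argument, so moving a differential operator in $z$ across the pairing, and matching the Koszul signs incurred when odd elements of $\mf g$ cross the slots, is delicate. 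This is exactly the step that the inverse-transform computation of the preceding paragraph sidesteps, which is why I would take that as the primary proof and relegate the direct verification to a remark.
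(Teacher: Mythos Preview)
Your proposal is correct and follows the same route the paper indicates: the paper simply says the result holds ``because of Theorem \ref{Theorem repr kernel} and the way $\SB$ is defined'', and you have unpacked exactly that---the reproducing property collapses $\irSB$ to $\iSB$ composed with relabelling, and equation (\ref{Eq Int}) does the rest. Your inverse-transform argument is the clean way to realise the paper's one-line justification; the alternative direct verification you sketch (via skew-supersymmetry and kernel symmetries) is unnecessary here, as you yourself note.
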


Because there exists a well-defined inverse Segal-Bargmann transform we also have the following property.

\begin{Prop} \label{Prop inverse SB transform}
Suppose either $\lambda=\alpha$ and $\alpha\not\in \N$ or $\lambda=1$ and $\alpha^{-1}\not\in \N$. The Segal-Bargmann transform $\rSB$ induces a $\mf g$-module isomorphism between $W_\lambda$ and $F_\lambda$.
\end{Prop}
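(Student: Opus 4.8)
The proposition asserts both that $\rSB$ respects the $\mathfrak g$-actions and that it is a bijection; since the intertwining relation $\rSB\circ\pil(X)=\rol(X)\circ\rSB$ is already Theorem \ref{ThIP}, the plan is to devote the entire argument to bijectivity, and the cleanest route is to verify that the operator $\irSB$ written down just after Definition \ref{Def SB} is a genuine two-sided inverse. First I would confirm that $\irSB$ really maps $F_\lambda$ into $W_\lambda$: for $p\in F_\lambda$ the pairing $\bfip{p,\ds I_\lambda(z,x)}$ is a polynomial in $x$ by Theorem \ref{Theorem repr kernel}, and applying $\iSB=\pil(C^{-1})$ produces, by the explicit computations of Section \ref{SSIntOp} (for instance $\iSB(x_1^k)=2^{-1-\alpha}\exp(-2(x_1+x_2))\KO_{\alpha,k}(x_1)$), an element of $\mathcal P(\R^{2|2})\exp(-2(x_1+x_2))\bmod\Ila=W_\lambda$.

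The two facts driving the computation are: (i) $\SB=\pil(C)$ and $\iSB=\pil(C^{-1})$ are mutually inverse operators modulo $\Ila$, which is immediate from $CC^{-1}=C^{-1}C=1$ once the formal identity \eqref{Eq Int} is read as an honest statement on these spaces; and (ii) the reproducing-kernel identity $\bfip{p(z),\ds I_\lambda(z,w)}=p(w)$ of Theorem \ref{Theorem repr kernel}, which says that pairing against $\ds I_\lambda$ acts as the identity on $F_\lambda$.

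With these in hand both compositions collapse. For $p\in F_\lambda$ I would compute
\begin{align*}
\SB\bigl(\irSB(p)\bigr)&=\SB\bigl(\iSB(\bfip{p,\ds I_\lambda(z,x)})\bigr)=\bfip{p,\ds I_\lambda(z,x)}=p(x),\\
\rSB\bigl(\irSB(p)\bigr)(z)&=\bfipx{\SB(\irSB(p))(x),\ds I_\lambda(z,x)}=\bfipx{p(x),\ds I_\lambda(z,x)}=p(z),
\end{align*}
using $\SB\circ\iSB=\id$ and then Theorem \ref{Theorem repr kernel} twice, so that $\rSB\circ\irSB=\id_{F_\lambda}$. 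The reverse composition is symmetric: for $f\in W_\lambda$ the kernel property gives $\rSB(f)(z)=\bfipx{\SB(f)(x),\ds I_\lambda(z,x)}=\SB(f)(z)$, whence $\irSB(\rSB(f))=\iSB(\SB(f))=f$ by $\iSB\circ\SB=\id$. Together with Theorem \ref{ThIP} this proves $\rSB$ is a $\mathfrak g$-module isomorphism.

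The main obstacle I anticipate is not any deep difficulty but the bookkeeping around well-definedness modulo $\Ila$ and the passage between the real coordinates $x$ on $W_\lambda$ and the complex coordinates $z$ on $F_\lambda$: one must check that $\SB$, $\iSB$, and the pairing against $\ds I_\lambda$ all descend to the quotients and are parity-preserving, and in particular that the formal manipulation \eqref{Eq Int} is legitimate on these spaces. Precisely this is guaranteed by the convergent series computations of Section \ref{SSIntOp}; once it is secured, the reproducing kernel forces the two inverse identities above and no further work is needed.
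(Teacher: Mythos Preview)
Your proposal is correct and follows the same route the paper takes: the paper's entire argument is the single sentence ``Because there exists a well-defined inverse Segal-Bargmann transform we also have the following property,'' relying on the formula for $\irSB$ displayed just before and on Theorem \ref{ThIP} for the intertwining. You have simply unpacked what that sentence means, verifying via the reproducing kernel (Theorem \ref{Theorem repr kernel}) and the mutual invertibility of $\SB=\pil(C)$ and $\iSB=\pil(C^{-1})$ that $\rSB$ and $\irSB$ are genuine two-sided inverses; the well-definedness concerns you flag are exactly the content of the explicit computations in Section \ref{SSIntOp} and Theorem \ref{CorMon}.
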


We can now construct the analogue of Theorem \ref{ThDecF} for the $(\mf g, \mf k)$-module $W_\lambda$. For $k\geq 1$ we introduce
\begin{align*}
W_{\lambda,0} &:= \irSB(F_{\lambda,0}) = \C\exp(-2(x_1+x_2)),\\
W_{\lambda,k} &:= \irSB(F_{\lambda,k}) = \C \KO_{\lambda,k}(x_i)\exp(-2(x_1+x_2))\\
&\quad + \C \KO_{\lambda,k-1}(x_i)\exp(-2(x_i-x_j)) +(\C x_3 + \C x_4)\KV_{\lambda,k-1}(x_i)\exp(-2x_i),\\
K_{\alpha, k} &:= \irSB(H_{\alpha,k}) = \C (\KO_{\alpha,k}(x_1)\exp(-2(x_1+x_2))\\
&\quad +(k-1-\alpha)\KO_{\alpha,k-1}(x_1)\exp(-2(x_1-x_2)) \\
&\quad + (\C x_3 + \C x_4)\KV_{\alpha,k-1}(x_1)\exp(-2x_1),\\
K_{1, k} &:= \irSB(H_{1,k}) = \C (\KO_{1,k}(x_2)\exp(-2(x_1+x_2))\\
&\quad +(k-1-\alpha^{-1})\KO_{1,k-1}(x_2)\exp(-2(x_2-x_1)) ) \\
&\quad + (\C x_3 + \C x_4)\KV_{1,k-1}(x_2)\exp(-2x_2),\\
R_{\lambda,k} &:= \irSB((z_1+z_2)^k) = \C (\KO_{\lambda,k}(x_i)\exp(-2(x_1+x_2)) \\
&\quad +k\KO_{\lambda,k-1}(x_i)\exp(-2(x_i-x_j)) ),
\end{align*}
where $(i,j)=(1,2)$ if $\lambda=\alpha$ and $(i,j)=(2,1)$ if $\lambda=1$. Combining Proposition \ref{Prop inverse SB transform} with Theorem \ref{ThDecF} we get the following theorem.

\begin{theorem}[Decomposition of $W$]\label{ThDecW}
Suppose either $\lambda=\alpha$ and $\alpha\not\in \N$ or $\lambda=1$ and $\alpha^{-1}\not\in \N$. We have the following:
\begin{itemize}
\item[(1)] \begin{itemize}
\item[(a)] For $\alpha\neq -1$ an explicit decomposition of $W_{\lambda,k}$ into irreducible $\mathfrak{k}_{0}$-modules is given by
\begin{align*}
W_{\lambda,k} = K_{\lambda,k} \oplus R_{\lambda,k}.
\end{align*}
\item[(b)] If $\alpha=-1$, then $W_{\lambda,k}$ is an indecomposable $\mf k_0$-module, but not an irreducible $\mf k_0$-module.
\end{itemize}
\item[(2)]$W_{\lambda,k}$ is an irreducible $\mf k$-module.
\item[(3)]$W_\lambda$ is an irreducible $\mathfrak{g}$-module and its $\mf k$-type decomposition is given by
\begin{align*}
W_{\lambda} = \bigoplus_{k=0}^\infty W_{\lambda,k}.
\end{align*}
\end{itemize}
\end{theorem}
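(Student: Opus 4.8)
The plan is to deduce every assertion directly from Theorem \ref{ThDecF} by transporting it across the Segal-Bargmann transform. By Proposition \ref{Prop inverse SB transform} the map $\rSB\colon W_\lambda \to F_\lambda$ is a $\mf g$-module isomorphism, and hence so is its inverse $\irSB\colon F_\lambda \to W_\lambda$. Since $\mf k_0 \subseteq \mf k \subseteq \mf g$, the same map $\irSB$ is simultaneously an isomorphism of $\mf g$-, $\mf k$- and $\mf k_0$-modules. An isomorphism of modules preserves irreducibility, indecomposability, and finite direct-sum decompositions, so each structural statement proved for $F_\lambda$ in Theorem \ref{ThDecF} transfers verbatim to its $\irSB$-image in $W_\lambda$. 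The spaces $W_{\lambda,k}$, $K_{\lambda,k}$ and $R_{\lambda,k}$ were defined precisely as the $\irSB$-images of $F_{\lambda,k}$, $H_{\lambda,k}$ and $\C(z_1+z_2)^k$, so the displayed equalities to be proved are statements about these images.

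For part (3), Theorem \ref{ThDecF}(3) states that $F_\lambda$ is an irreducible $\mf g$-module; applying the $\mf g$-isomorphism $\irSB$ shows $W_\lambda$ is irreducible as well. Pulling the $\mf k$-type decomposition $F_\lambda = \bigoplus_{k\geq 0} F_{\lambda,k}$ back through $\irSB$ yields
\[
W_\lambda = \irSB\Big(\bigoplus_{k\geq 0} F_{\lambda,k}\Big) = \bigoplus_{k\geq 0}\irSB(F_{\lambda,k}) = \bigoplus_{k\geq 0} W_{\lambda,k},
\]
and because $\irSB$ is $\mf k$-linear this is again a decomposition into $\mf k$-submodules, i.e.\ the $\mf k$-type decomposition. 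For part (2), restricting $\irSB$ to a graded piece gives a $\mf k$-module isomorphism $F_{\lambda,k}\cong W_{\lambda,k}$, so the irreducibility of $F_{\lambda,k}$ as a $\mf k$-module from Theorem \ref{ThDecF}(2) carries over to $W_{\lambda,k}$.

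For part (1)(a) with $\alpha \neq -1$, I would apply the $\mf k_0$-linear isomorphism $\irSB$ to the decomposition $F_{\lambda,k} = H_{\lambda,k}\oplus \C(z_1+z_2)^k$ of Theorem \ref{ThDecF}(1)(a); injectivity and linearity send this internal direct sum to the internal direct sum $W_{\lambda,k} = K_{\lambda,k}\oplus R_{\lambda,k}$, with both summands irreducible $\mf k_0$-modules since $\irSB$ preserves $\mf k_0$-irreducibility. For part (1)(b) with $\alpha=-1$, both indecomposability and the failure of irreducibility are preserved by the $\mf k_0$-module isomorphism $\irSB$, so the corresponding statement of Theorem \ref{ThDecF}(1)(b) transfers to $W_{\lambda,k}$.

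Finally, I expect no genuine obstacle at this stage: the substantive work has already been carried out, namely the proof that $\rSB$ is a well-defined $\mf g$-module isomorphism (Proposition \ref{Prop inverse SB transform}, resting on the intertwining property of Theorem \ref{ThIP} and the reproducing kernel of Theorem \ref{Theorem repr kernel}) together with the explicit computation of the images $\irSB(F_{\lambda,k})$ in Theorem \ref{CorMon}. The only remaining point is to observe that the explicit formulas displayed in the definitions of $W_{\lambda,k}$, $K_{\lambda,k}$ and $R_{\lambda,k}$ are indeed the $\irSB$-images of the corresponding Fock-space subspaces, which is immediate from Theorem \ref{CorMon}. Granting this, the theorem is a purely formal consequence of Theorem \ref{ThDecF} and Proposition \ref{Prop inverse SB transform}.
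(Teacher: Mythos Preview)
Your proposal is correct and matches the paper's approach exactly: the paper states the theorem as an immediate consequence of Proposition \ref{Prop inverse SB transform} and Theorem \ref{ThDecF}, and the definitions of $W_{\lambda,k}$, $K_{\lambda,k}$, $R_{\lambda,k}$ are set up precisely so that the transfer is automatic. Your write-up is in fact more detailed than the paper's one-line justification.
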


We can construct a non-degenerate supersymmetric sesquilinear form on $W_\lambda$ using the Segal-Bargmann transform in conjunction with the Bessel-Fischer product. More specifically, if we define
\begin{align*}
\ip{f,g} := \bfip{\rSB f, \rSB g},
\end{align*}
for all $f,g\in W_\lambda$, then $\ip{\cdot \, , \cdot}$ is a non-degenerate supersymmetric sesquilinear form on $W_\lambda$. By definition we now have the following property.

\begin{theorem}[Unitary property]\label{PropUnitSB}
Suppose either $\lambda=\alpha$ and $\alpha\not\in \N$ or $\lambda=1$ and $\alpha^{-1}\not\in \N$. The Segal-Bargmann transform preserves the sesquilinear forms, i.e.,
\begin{align*}
\bfip{\rSB f,\rSB g} = \ip{f, g},
\end{align*}
for all $f,g\in W_\lambda$.
\end{theorem}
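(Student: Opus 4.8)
The key observation is that the claimed identity is nothing but the defining equation of the form $\ip{\cdot\,,\cdot}$ on $W_\lambda$. By construction $\ip{f,g} := \bfip{\rSB f,\rSB g}$, so the asserted equality $\bfip{\rSB f,\rSB g} = \ip{f,g}$ holds tautologically for all $f,g\in W_\lambda$. The plan is therefore to spend no effort on the equality itself, and instead to justify the surrounding claim that $\ip{\cdot\,,\cdot}$ really is a well-defined non-degenerate, superhermitian, sesquilinear form, so that the statement has content as a genuine unitarity property of $\rSB$.

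First I would record that $\rSB$ is a parity-preserving $\C$-linear bijection $W_\lambda \to F_\lambda$: linearity is immediate from Definition \ref{Def SB}, since $f$ enters only through the first (left-linear) slot of the Bessel--Fischer product after application of the linear operator $\SB$; the fact that $\rSB$ is even and bijective with inverse $\irSB$ is exactly Proposition \ref{Prop inverse SB transform}. I would then transport the three structural properties of $\bfip{\cdot\,,\cdot}$ on $\Fock\cong F_\lambda$ across this isomorphism. Sesquilinearity of $\ip{\cdot\,,\cdot}$ follows from that of the Bessel--Fischer product together with the $\C$-linearity of $\rSB$, the conjugation in the second slot being inherited unchanged. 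Superhermitianity follows from the superhermitianity of the Bessel--Fischer product on $\Fock$: for homogeneous $f,g$ one has $\abs{\rSB f}=\abs{f}$, whence $\ip{f,g} = \bfip{\rSB f,\rSB g} = (-1)^{\abs{f}\abs{g}}\bfipbar{\rSB g,\rSB f} = (-1)^{\abs{f}\abs{g}}\ipbar{g,f}$. Non-degeneracy follows from Corollary \ref{nondeg}: if $\ip{f,g}=0$ for all $g\in W_\lambda$, then $\bfip{\rSB f,\rSB g}=0$ for all such $g$, and since $\rSB$ is surjective this forces $\rSB f=0$, hence $f=0$.

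The only place where genuine work is required, and thus the only real obstacle, has already been overcome upstream: it is Proposition \ref{Prop inverse SB transform}, which itself rests on the reproducing kernel of Theorem \ref{Theorem repr kernel} and the intertwining identity of Theorem \ref{ThIP}. Given those inputs the present statement is purely formal, and I expect the written proof to amount to a single remark that the identity is the definition of $\ip{\cdot\,,\cdot}$, with the non-degeneracy, superhermitianity, and sesquilinearity of this form inherited from the corresponding properties of the Bessel--Fischer product by transport of structure along the isomorphism $\rSB$.
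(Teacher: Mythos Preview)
Your proposal is correct and matches the paper exactly: the paper simply states ``By definition we now have the following property'' immediately after introducing $\ip{f,g} := \bfip{\rSB f,\rSB g}$, giving no further proof. Your additional verification that the transported form is sesquilinear, superhermitian and non-degenerate is more than the paper writes out, but is precisely the content implicit in the paper's one-line claim that $\ip{\cdot\,,\cdot}$ is a non-degenerate superhermitian sesquilinear form.
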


\subsection{Recurrence relations of $\KO_{\lambda,k}$}\label{SecOmega}
We finish this paper by taking a closer look at the polynomials $\KO_{\lambda,k}$. Using the intertwining property of $\SB$ we can deduce several relations between these polynomials. For example in this section we recover two known differential recurrence relations and one regular recurrence relations for the confluent hypergeometric functions. 

We consider only the case $\lambda=\alpha$. The $\lambda =1$ case is obtained by substituting the occurrences of $\alpha$ in the expressions with $\alpha^{-1}$ and switching the roles of $x_1$ and $x_2$.

Combining equation (\ref{EqKO}) with
\begin{align*}
\iSB(x_1^k) &= \dfrac{\imath}{2}\iSB(\rol(f_1,-2\imath L_{e_1}, e_1)x_1^{k-1}) \\
&=  \dfrac{\imath}{2}\pil(f_1,-2\imath L_{e_1}, e_1)\iSB(x_1^{k-1})\\
&= \dfrac{1}{2^{1+\alpha}}\left(x_1+\dfrac{\alpha}{2}-\E+\dfrac{1}{4}\bessel(x_1)\right)\KO_{\alpha, k-1}(x_1)\exp(-2x_2)
\end{align*}
we get
\begin{align*}
4\KO_{\alpha, k}(x_1)\exp(-2x_1) &= 4\left(x_1+\dfrac{\alpha}{2}-\E+\dfrac{1}{4}\bessel(x_1)\right)\KO_{\alpha, k-1}(x_1)\exp(-2x_1)\\
&= \left(4x_1+2\alpha\right)\KO_{\alpha, k-1}(x_1)\exp(-2x_1)\\ 
&\quad - \left(4x_1+\alpha\right)\pt {x_1}\KO_{\alpha, k-1}(x_1)\exp(-2x_1)\\
&\quad + x_1 \pt {x_1}^2 \KO_{\alpha, k-1}(x_1)\exp(-2x_1).
\end{align*}
Therefore we have the differential recurrence relation
\begin{align*}
4\KO_{\alpha, k}(x_1) &= \left(4x_1+2\alpha\right)\KO_{\alpha, k-1}(x_1)\\ 
&\quad - \left(4x_1+\alpha\right)\pt {x_1}\KO_{\alpha, k-1}(x_1) + 2\left(4x_1+\alpha\right)\KO_{\alpha, k-1}(x_1)\\
&\quad + x_1 \pt {x_1}^2\KO_{\alpha, k-1}(x_1) - 4 x_1 \pt {x_1} \KO_{\alpha, k-1}(x_1) + 4 x_1 \pt {x_1}^2 \KO_{\alpha, k-1}(x_1)\\
&= 4\left(4x_1+\alpha\right)\KO_{\alpha, k-1}(x_1) - \left(8x_1+\alpha\right)\pt {x_1}\KO_{\alpha, k-1}(x_1)\\
&\quad + x_1 \pt {x_1}^2\KO_{\alpha, k-1}(x_1).
\end{align*}
Taking equation (\ref{EqDiff}) into account this becomes
\begin{align}\label{Diffrec1}
\KO_{\alpha, k}(x_1) &= \left(4x_1 + \alpha - k + 1\right)\KO_{\alpha, k-1}(x_1) - x_1\pt {x_1}\KO_{\alpha, k-1}(x_1).
\end{align}
Similarly, if we combine equation (\ref{EqKO}) with
\begin{align*}
\iSB(x_1^k) &= \dfrac{\imath}{2(k+1)(k-\alpha)}\iSB(\rol(f_1,2\imath L_{e_1}, e_1)x_1^{k+1}) \\
&=  \dfrac{\imath}{2(k+1)(k-\alpha)}\pil(f_1,2\imath L_{e_1}, e_1)\iSB(x_1^{k+1})\\
&= \dfrac{1}{2^{1+\alpha}(k+1)(k-\alpha)}\\
&\quad \times\left(x_1+\dfrac{\alpha}{2}-\E+\dfrac{1}{4}\bessel(x_1)\right)\KO_{\alpha, k+1}(x_1)\exp(-2(x_1+x_2))
\end{align*}
we get the differential recurrence relation
\begin{align*}
4(k+1)(k-\alpha)\KO_{\alpha, k}(x_1) &=  -\alpha\pt {x_1}\KO_{\alpha, k+1}(x_1) + x_1 \pt {x_1}^2 \KO_{\alpha, k+1}(x_1).
\end{align*}
Taking equation (\ref{EqDiff}) into account this becomes
\begin{align}\label{Diffrec2}
(k+1)(k-\alpha)\KO_{\alpha, k}(x_1) &= -\left(k + 1\right)\KO_{\alpha, k+1}(x_1) + x_1\pt {x_1}\KO_{\alpha, k+1}(x_1).
\end{align}
We can rewrite relations (\ref{Diffrec1}) and (\ref{Diffrec2}) as
\begin{align*}
x_1\pt {x_1} \KO_{\alpha,k}(x_1) &= \left(4x_1+\alpha-k \right)\KO_{\alpha,k}(x_1) - \KO_{\alpha,k+1}(x_1),\\
x_1\pt {x_1} \KO_{\alpha,k}(x_1) &= k\KO_{\alpha,k}(x_1) - k(\alpha-k+1)\KO_{\alpha,k-1}(x_1),
\end{align*}
respectively. Subtracting the first equation from the second gives us the recurrence relation
\begin{align*}
\KO_{\alpha,k+1}(x_1) + (2k-\alpha-4x_1)\KO_{\alpha,k}(x_1)-k(\alpha-k+1)\KO_{\alpha,k-1}(x_1) = 0.
\end{align*}
By using Proposition \ref{EqKummer} we can write the above relations as recurrence relations for the confluent hypergeometric function of the second kind,
\begin{align*}
x\pt {x} U(-k,-\alpha,x) &= \left(x+\alpha-k \right)U(-k,-\alpha,x) - U(-k-1,-\alpha,x),\\
x\pt {x} U(-k,-\alpha,x) &= kU(-k,-\alpha,x) - k(\alpha-k+1)U(-k+1,-\alpha,x),
\end{align*}
and
\begin{align*}
(x-2k+\alpha)\KU(-k,-\alpha,x)=\KU(-k-1,-\alpha,x)-k(\alpha-k+1)\KU(-k+1,-\alpha,x),
\end{align*}
with $k\in\N\setminus \{0\}$, $\alpha\not\in \N$ and $x\in \R$. These relations are known to hold more generally and can be obtained as combinations of \cite[Equation 13.3.7]{NIST:DLMF}, \cite[Equation 13.3.10]{NIST:DLMF} and  \cite[Equation 13.3.22]{NIST:DLMF}.

\subsection*{Acknowledgements}
SB is supported by a FWO postdoctoral junior fellowship from the Research Foundation Flanders (1269821N).
The authors thank Hendrik De Bie and Jan Frahm for interesting and useful discussions.

\bibliography{citations} 
\bibliographystyle{ieeetr}

\end{document}